\tikzset{    vertex/.style={circle,draw,minimum size=1.5em},    edge/.style={->,> = latex'}}
\newtheorem{theorem}{Theorem}[section]
\newtheorem{lemma}[theorem]{Lemma}
\newtheorem{theo}[theorem]{Theorem}
\newtheorem{lem}[theorem]{Lemma}
\newtheorem{pro}[theorem]{Proposition}
\newtheorem{exa}[theorem]{Example}
\newtheorem{que}[theorem]{Question}
\newtheorem{definition}[theorem]{Definition}
\newtheorem*{Definition*}{Definition}
\numberwithin{equation}{section}
\def\qed{\hfill \ifhmode\unskip\nobreak\fi\quad\ifmmode\Box\else$\Box$\fi\\ }
\begin{document}

\title[Almost complex torus 6-manifolds with Euler number 6]{Six dimensional almost complex torus manifolds with Euler number six}
\author{Donghoon Jang and Jiyun Park}
\thanks{Donghoon Jang was supported by the National Research Foundation of Korea(NRF) grant funded by the Korea government(MSIT) (2021R1C1C1004158).}
\address{Department of Mathematics, Pusan National University, Pusan, Korea}
\email{donghoonjang@pusan.ac.kr, pjy1147@pusan.ac.kr}

\begin{abstract}
An almost complex torus manifold is a $2n$-dimensional compact connected almost complex manifold equipped with an effective action of a real $n$-dimensional torus $T^n \simeq (S^1)^n$ that has fixed points. For an almost complex torus manifold, there is a labeled directed graph which contains information on weights at the fixed points and isotropy spheres.

Let $M$ be a 6-dimensional almost complex torus manifold with Euler number 6. We show that two types of graphs occur for $M$, and for each type of graph we construct such a manifold $M$, proving the existence. Using the graphs, we determine the Chern numbers and the Hirzebruch $\chi_y$-genus of $M$.
\end{abstract}

\maketitle

\tableofcontents

\section{Introduction}

For an almost complex torus manifold, there is a (directed labeled) graph that contains information on weights at the fixed points and isotropy spheres; a vertex of the graph corresponds to a fixed point, an edge corresponds to an isotropy sphere, and the label of an edge gives a weight at a fixed point corresponding to its vertex; see Definitions \ref{d2} and \ref{d1} and Proposition \ref{p2}.  

Suppose that two almost complex torus manifolds have the same associated graph. Then they have the same weights at the fixed points, and thus the same Chern numbers (and hence have the same Hirzebruch $\chi_y$-genus, signature, Todd genus, and Euler number.) If in addition their actions are both equivariantly formal, they have the same equivariant cohomology, because the graph is a GKM(Goresky-Kottwitz-MacPherson) graph if we forget the direction of each edge \cite[Corollary 2.4]{J4}; moreover they have the same Chern class.

In \cite{J6}, the first author showed that a $2n$-dimensional almost complex torus manifold with Euler number $n+1$, which is minimal, has the same associated graph as one for some linear action on the complex projective space $\mathbb{CP}^n$. This implies that the aforementioned invariants agree for the two manifolds. Li studied an analogous problem for Hamiltonian $S^1$-actions on symplectic manifolds with (almost) minimal fixed point sets; such a manifold shares some invariants with the complex projective spaces or Grassmannians \cite{L1, L2}.

In this paper, we study 6-dimensional almost complex torus manifolds with Euler number 6 (6 fixed points), which is the next minimal Euler number in dimension 6; since any torus action on a $(4m+2)$-dimensional compact almost complex manifold cannot have an odd number of fixed points, 6 is the next possible number of fixed points.

For a 6-dimensional almost complex torus manifold with Euler number 6, we show that two types of graphs occur as a graph associated to the manifold.

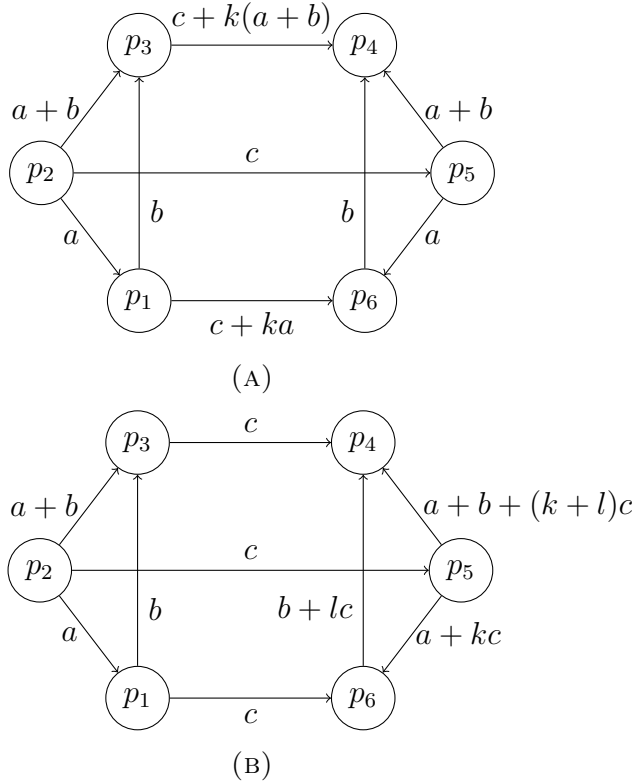
\begin{figure}
\begin{center}
\begin{subfigure}[b][5cm][s]{.53\textwidth}
\centering
\begin{tikzpicture}[state/.style ={circle, draw}]
\node[vertex] (a) at (-1.5, 0) {$p_{1}$};
\node[vertex] (b) at (-2.8, 1.7) {$p_{2}$};
\node[vertex] (c) at (-1.5, 3.4) {$p_{3}$};
\node[vertex] (d) at (1.5, 3.4) {$p_{4}$};
\node[vertex] (e) at (2.8, 1.7) {$p_{5}$};
\node[vertex] (f) at (1.5, 0) {$p_{6}$};
\path (b) [->] edge node [left] {$a$} (a);
\path (b) [->] edge node [left] {$a+b$} (c);
\path (a) [->] edge node [pos=.3, right] {$b$} (c);
\path (e) [->] edge node [right] {$a$} (f);
\path (e) [->] edge node [right] {$a+b$} (d);
\path (f) [->] edge node [pos=.3, left]{$b$} (d);
\path (a) [->] edge node [below] {$c+ka$} (f);
\path (b) [->] edge node [above] {$c$} (e);
\path (c) [->] edge node [above] {$c+k(a+b)$} (d);
\end{tikzpicture}
\caption{}\label{f4a}
\vspace{\baselineskip}
\end{subfigure}\qquad

\vspace{\baselineskip}
\begin{subfigure}[b][5cm][s]{.53\textwidth}
\centering
\begin{tikzpicture}[state/.style ={circle, draw}]
\node[vertex] (a) at (-1.5, 0) {$p_{1}$};
\node[vertex] (b) at (-2.8, 1.7) {$p_{2}$};
\node[vertex] (c) at (-1.5, 3.4) {$p_{3}$};
\node[vertex] (d) at (1.5, 3.4) {$p_{4}$};
\node[vertex] (e) at (2.8, 1.7) {$p_{5}$};
\node[vertex] (f) at (1.5, 0) {$p_{6}$};
\path (b) [->] edge node [left] {$a$} (a);
\path (b) [->] edge node [left] {$a+b$} (c);
\path (a) [->] edge node [pos=.3, right] {$b$} (c);
\path (e) [->] edge node [right=-.1]{$a+kc$} (f);
\path (e) [->] edge node [right]{$a+b+(k+l)c$} (d);
\path (f) [->] edge node [pos=.3, left] {$b+lc$} (d);
\path (a) [->] edge node [below] {$c$} (f);
\path (b) [->] edge node [above] {$c$} (e);
\path (c) [->] edge node [above] {$c$} (d);
\end{tikzpicture}
\caption{}\label{f4b}
\vspace{\baselineskip}
\end{subfigure}\qquad
\caption{Graphs for 6-dimensional almost complex torus manifolds with Euler number 6}\label{f4}
\end{center}
\end{figure}

\begin{theorem} \label{t11}
Let $M$ be a 6-dimensional almost complex torus manifold with Euler number 6. Then one of Figure \ref{f4} occurs as a graph describing $M$, for some $a$, $b$, and $c$ in $\mathbb{Z}^3$ that span $\mathbb{Z}^3$ and for some integers $k$ (and $l$ for Figure \ref{f4b}).
\end{theorem}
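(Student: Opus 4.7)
The plan is to reconstruct the associated graph $\Gamma$ of $M$ from its local weight data and then classify the admissible labellings. By Proposition \ref{p2} each weight at a fixed point $p$ corresponds to an edge emanating from $p$, and since $\dim M = 6$ there are three weights at each of the six vertices, so $\Gamma$ has $6$ vertices and $9$ edges, each vertex having valence $3$. I would use effectiveness of the $T^3$-action to ensure that at some chosen fixed point the three weights are a $\mathbb{Z}$-basis of $\mathbb{Z}^3$.

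First I would locate a triangle subgraph. Pick a fixed point $p_2$ whose three weights $a,b,c$ form a $\mathbb{Z}$-basis of $\mathbb{Z}^3$. The edges out of $p_2$ with weights $a$ and $b$ terminate at two other fixed points, say $p_1$ and $p_3$. The GKM-type compatibility along the edge $p_2 \to p_1$, namely that weights at $p_2$ and $p_1$ agree modulo $a$, forces the three weights at $p_1$ to be $-a$, $b + ka$ for some $k\in\mathbb{Z}$, and some $c' \equiv c \pmod{a}$. The edge of $p_1$ carrying the weight $b + ka$ must end at another fixed point $q$, and by pairing with the analogous analysis at $p_3$ I would argue that $q = p_3$, producing the first triangle with labels $\{a,\, b+ka,\, a+b+ka\}$ (after absorbing $ka$ into $b$ if desired). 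Running the same argument on any vertex of the remaining set $\{p_4, p_5, p_6\}$ produces a second triangle, so $\Gamma$ is two triangles joined by a perfect matching of three edges.

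With this skeleton in place, I would finish by a short arithmetic case analysis on the matching. Writing the weights of the three connecting edges on the $p_1,p_2,p_3$-side as $c + \lambda a$, $c$, and $c + \mu(a+b)$, compatibility around the second triangle forces a relation between $\lambda$ and $\mu$ that leaves two possibilities: either $\lambda=\mu=k$ for a single integer $k$, with the second triangle still carrying the standard labels $\{a,b,a+b\}$ (this is Figure \ref{f4a}); or all three connecting labels are forced to equal $c$, in which case the second triangle carries the shifted labels $\{a+kc,\, b+lc,\, a+b+(k+l)c\}$ (Figure \ref{f4b}). The dichotomy is precisely whether the shift is absorbed into the matching edges or into the second triangle.

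The hardest step will be the triangle-finding in the middle paragraph: a priori the ``third'' edges at $p_1,p_2,p_3$ could all escape into $\{p_4,p_5,p_6\}$ in a way that never closes a triangle (producing, for example, a $K_{3,3}$, a $6$-cycle, or a twisted-prism matching). Ruling out these topologies requires combining the GKM compatibility along multiple edges simultaneously with the spanning/primitivity of the weights in $\mathbb{Z}^3$, and this is where the bulk of the combinatorial work of the proof lies; once the two triangles are identified, the remaining classification of the labels is essentially arithmetic.
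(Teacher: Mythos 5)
Your skeleton (a $3$-valent simple graph on $6$ vertices, two triangles joined by a matching, then arithmetic on the labels) matches the paper's, and the congruence you invoke along an edge is exactly the paper's Lemma \ref{l5}. But there is a genuine gap precisely at the step you flag as hardest, and it is not a harder instance of the argument you set up --- it needs a tool your proposal never introduces. The paper does not close the triangles by ``combining GKM compatibility along multiple edges with primitivity.'' The decisive input is Lemma \ref{p3}: the $4$-dimensional component of $M^{\ker a\cap\ker b}$ through a fixed point contains \emph{at most $4$} fixed points. That bound is global, not local: Proposition \ref{p6} pins $\chi_y(M)$ down to $1-2y+2y^2-y^3$ or $2-y+y^2-2y^3$ (via Lemma \ref{l1} and Theorem \ref{t1}), and the Kosniowski formula (Theorem \ref{t4}) applied to the circle $\ker a\cap\ker b$ shows that a $4$-dimensional isotropy submanifold with $5$ or more fixed points would force some coefficient of $\chi_y(M)$ to exceed $2$. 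Only with this bound does Proposition \ref{p7} produce a $2$-valent subgraph on at most $4$ vertices containing $p_1,p_2,p_3$, which is what actually forces the triangle to close and, in Proposition \ref{c1}, rules out the $K_{3,3}$ topology (your ``third edges all escape to the other side'' scenario). Your local congruences alone do not exclude the possibility that the component of $M^{\ker a\cap\ker b}$ through $p_2$ is a hexagon through all six fixed points --- a $4$-dimensional almost complex torus manifold can perfectly well have $6$ fixed points in general --- and in that case no triangle ever closes.

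A secondary point: for the label relations the paper does not rely on congruences alone; it first classifies $4$-dimensional almost complex torus manifolds with $3$ or $4$ fixed points (Lemma \ref{l6}) by applying the Atiyah--Bott--Berline--Vergne localization theorem to the class $1$, giving $w_{1,3}=w_{1,2}+w_{2,3}$ on a triangle and ``$a=d$ or $b=c$'' on a square. Your congruence-plus-linear-independence derivation can in fact recover these relations, so that portion of your plan is salvageable; but the concluding ``short arithmetic case analysis'' is in reality the multi-branch case check of Section \ref{s7} (cases (A-i), (A-ii), (B-i), (B-ii) and their subcases), not the two-line dichotomy you sketch. As written, the proposal is incomplete at its central step.
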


Moreover, for each type of graph, we construct such a manifold $M$, proving the existence.

\begin{theorem} \label{t1.2}
Each of graphs in Figure \ref{f4} describes some 6-dimensional almost complex torus manifold with Euler number 6; Figure \ref{f4a} describes Example \ref{e1} and Figure \ref{f4b} describes Example \ref{e2}.
\end{theorem}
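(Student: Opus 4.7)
The plan is to realize each example as an equivariant projective bundle over a smaller-dimensional projective space---a higher-dimensional Hirzebruch-type manifold---with its natural toric structure. The combinatorial shape of each graph determines the correct model: Figure \ref{f4a}, which consists of two $\mathbb{CP}^2$-triangles joined by a three-edge ``ladder'', corresponds to a $\mathbb{CP}^1$-bundle over $\mathbb{CP}^2$, while Figure \ref{f4b}, which consists of two $\mathbb{CP}^2$-triangles joined by a ladder of three parallel $c$-edges, corresponds to a $\mathbb{CP}^2$-bundle over $\mathbb{CP}^1$. In both cases the total space is a smooth projective toric manifold, admits an effective $T^3$-action, and has exactly six torus-fixed points, matching the required Euler number.

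For Figure \ref{f4a}, I take $M_1 = \mathbb{P}(\mathcal{O} \oplus \mathcal{O}(k)) \to \mathbb{CP}^2$. The torus $T^3 = T^2 \times S^1$ acts via the standard $T^2$-action on the base with weights $0, a, a+b$ on the three homogeneous coordinates, while the $S^1$-factor acts with weight $c$ on the $\mathcal{O}(k)$ summand and trivially on the $\mathcal{O}$ summand. The two triangular faces of Figure \ref{f4a} are the images of the two sections of the projective bundle; each is a copy of $\mathbb{CP}^2$ whose three isotropy $\mathbb{CP}^1$'s produce the labels $a, b, a+b$. The three vertical ``ladder'' edges are the fiber $\mathbb{CP}^1$'s over the three base fixed points, with weights $c$, $c+ka$, $c+k(a+b)$ arising from the $T^2$-weights of $\mathcal{O}(k)$ at each base fixed point. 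For Figure \ref{f4b}, I take $M_2 = \mathbb{P}(\mathcal{O} \oplus \mathcal{O}(k) \oplus \mathcal{O}(k+l)) \to \mathbb{CP}^1$, with $T^3 = S^1 \times T^2$ acting via the standard $S^1$-action of weight $c$ on the base and a $T^2$-action of weights $0, a, a+b$ on the three summands. Now the two triangles of Figure \ref{f4b} are the two fibers over the base fixed points (each a copy of $\mathbb{CP}^2$), and the three ladder edges are the three base $\mathbb{CP}^1$'s embedded as sections coming from the three line-bundle summands. The shifts $a+kc$, $b+lc$, $a+b+(k+l)c$ at the vertices of the top triangle are produced by the $S^1$-weights that the transition functions of $\mathcal{O}(k)$ and $\mathcal{O}(k+l)$ attach to the second base fixed point.

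The only real work is to verify that at each of the six fixed points of each $M_i$, the three tangent $T^3$-weights together with the graph structure of the outgoing isotropy spheres match the corresponding figure exactly, including the direction convention prescribed by Definition \ref{d1}. This is a routine direct computation using the standard affine charts on $\mathbb{CP}^2$ (respectively $\mathbb{CP}^1$) together with the transition functions of $\mathcal{O}(k)$; the main thing to watch is the sign convention, which may require replacing $\mathcal{O}(k)$ by $\mathcal{O}(-k)$, but this causes no loss because the parameter $k$ in the theorem is an arbitrary integer. That $a, b, c$ span $\mathbb{Z}^3$ is immediate from the construction, and that $M_1, M_2$ are almost complex torus manifolds follows from their being smooth projective toric varieties carrying the required effective $T^3$-action.
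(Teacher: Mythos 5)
Your proposal is correct and is essentially the paper's own proof in different clothing: the paper constructs the same two manifolds as subvarieties cut out by bihomogeneous equations ($M_1(k)\subset\mathbb{CP}^3\times\mathbb{CP}^2$ and $M_2(k,l)\subset\mathbb{CP}^4\times\mathbb{CP}^1$), which are exactly your projective bundles $\mathbb{P}(\mathcal{O}\oplus\mathcal{O}(\mp k))\to\mathbb{CP}^2$ and $\mathbb{P}(\mathcal{O}\oplus\mathcal{O}(\mp k)\oplus\mathcal{O}(\mp(k+l)))\to\mathbb{CP}^1$, and then verifies the fixed points, tangent weights, and isotropy spheres by precisely the local-chart computation you defer to as routine. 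The only caveat is the sign of the twist, which you correctly flag as being absorbed by the arbitrariness of the integers $k$ and $l$.
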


\begin{exa} \label{e1} (More details are in Example \ref{ex1}.)
Fix an integer $k$. Let $M_1 (k)$ be a (real) 6-dimensional compact complex manifold
\begin{center}$M_1(k)=\{([z_0:z_1:z_2:z_3],[w_0:w_1:w_2]) \in \mathbb{CP}^3 \times \mathbb{CP}^2 \, | \, z_0 w_1^k=z_1 w_0^k, z_1 w_2^k=z_2 w_1^k\}$.\end{center}
Let $T^3$ act on $M_1(k)$ by
\begin{center} $g \cdot ([z_0:z_1:z_2:z_3],[w_0:w_1:w_2])$
$= ([z_0:g^{ka} z_1:g^{k(a+b)} z_2:g^{-c} z_3],[w_0: g^a w_1:g^{a+b} w_2])$
\end{center}
for all $g\in T^3 \subset \mathbb{C}^3$, where $a, b$, and $c$ form a basis of $\mathbb{Z}^3$. Figure \ref{f4a} is a graph describing $M_1(k)$ with this action.
\end{exa}

\begin{exa} \label{e2} (More details are in Example \ref{ex2}.)
Fix integers $k$ and $l$. Let $M_2 (k,l)$ be a (real) 6-dimensional compact complex manifold
\begin{center}
$M_2(k,l)=\{([z_0:z_1:z_2:z_3:z_4],[w_0:w_1]) \in \mathbb{CP}^4 \times \mathbb{CP}^1 \, | \, z_0 w_1^k=z_3 w_0^k, z_2 w_1^{k+l}=z_4 w_0^{k+l}\}$.
\end{center} 
Let $T^3$ act on $M_2 (k,l)$ by
\begin{center}$ g \cdot ([z_0:z_1:z_2:z_3:z_4],[w_0:w_1])$ 
$= ([z_0: g^{-a} z_1: g^b z_2: g^{kc} z_3: g^{b+(k+l)c} z_4],[w_0:g^c w_1])$\end{center}
for all $g\in T^3 \subset \mathbb{C}^3$, where $a$, $b$, and $c$ form a basis of $\mathbb{Z}^3$. Figure \ref{f4b} is a graph describing $M_2(k,l)$ with this action.
\end{exa}

As a consequence of Theorem \ref{t11}, for a 6-dimensional almost complex torus manifold with Euler number 6, we determine its Chern numbers and the Hirzebruch $\chi_y$-genus.

\begin{theorem} \label{t81}
Let $M$ be a 6-dimensional almost complex torus manifold with Euler number 6. Then the Hirzebruch $\chi_y$-genus of $M$ is $\chi_y (M) = 1-2y + 2y^2-y^3$.
\begin{enumerate}[(1)]
\item Suppose that Figure \ref{f4a} describes $M$. Then the Chern numbers of $M$ are $\int_M c_1 c_2 = 24$, $\int_M c_3 = 6$, and $\int_M c_1^3=2(k^2+27)$ for some integer $k$.
\item Suppose that Figure \ref{f4b} describes $M$. Then the Chern numbers of $M$ are $\int_M c_1 c_2 = 24$, $\int_M c_3 = 6$, and $\int_M c_1^3=54$.
\end{enumerate}
\end{theorem}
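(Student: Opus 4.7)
The plan is to combine Theorem \ref{t11} with the Atiyah--Bott--Berline--Vergne (ABBV) localization formula at the six fixed points. For each graph in Figure \ref{f4} the three weights at every vertex are read off from the incident edges (a directed edge $p\to q$ with label $w$ contributes weight $w$ at $p$ and weight $-w$ at $q$). Applied to $c_3$, localization gives $\int_M c_3=\sum_p 1=6$, in agreement with $\chi(M)=6$.

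For the Hirzebruch $\chi_y$-genus I would use its rigidity under the $T^3$-action together with ABBV:
\[
\chi_y(M)=\sum_{p}\prod_{i=1}^{3}\frac{1+y\,e^{-w_i(p)(\xi)}}{1-e^{-w_i(p)(\xi)}}\;=\;\sum_{p}(-y)^{n_p},
\]
where $\xi\in\mathbb{R}^3$ is generic, $n_p$ counts the weights at $p$ pairing negatively with $\xi$, and the second equality is obtained by letting $\xi$ tend to infinity along a generic ray. Choosing $\xi$ with $a(\xi),b(\xi)>0$ and $c(\xi)$ dominant (with sign adjusted to $k,l$ in Figure \ref{f4b} if necessary), I would tabulate the three weights at each of $p_1,\ldots,p_6$ from each graph and verify case by case that $(n_{p_1},\ldots,n_{p_6})$ is a permutation of $(0,1,1,2,2,3)$, yielding $\chi_y(M)=1-2y+2y^2-y^3$. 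Evaluating at $y=0$ gives $\operatorname{Td}(M)=1$, and the dimension-three Todd formula $\operatorname{Td}_3=\tfrac{1}{24}c_1c_2$ then yields $\int_M c_1c_2=24$.

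It remains to compute $\int_M c_1^3$, for which I would apply ABBV directly:
\[
\int_M c_1^{3}=\sum_{p}\frac{(w_1(p)+w_2(p)+w_3(p))^{3}}{w_1(p)\,w_2(p)\,w_3(p)}.
\]
Substituting the weights of Figure \ref{f4a} (respectively Figure \ref{f4b}) gives a rational expression in $a,b,c$ and $k$ (resp.\ $k,l$) which a priori collapses to an integer independent of the basis $a,b,c$. The main obstacle is this algebraic simplification in case (1), where the $k$-dependent weights $c+ka$ and $c+k(a+b)$ make the sum a genuine rational function of $k$ that must collapse to the quadratic $2(k^{2}+27)$. A practical approach is, after clearing a common denominator, to specialize $(a,b,c)$ to the standard basis of $\mathbb{Z}^3$ and then pin down the resulting polynomial in $k$ by evaluating at a few small integer values; the same direct substitution in case (2) yields the constant $54$.
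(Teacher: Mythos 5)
Your proposal is correct in substance but reaches two of the conclusions by a different route than the paper. The paper computes $\int_M c_1c_2$ first, by brute-force ABBV localization of $\sigma_1\sigma_2/\sigma_3$ over the six fixed points (a page of rational-function algebra, using for Figure \ref{f4b} the substitution $A=a+kc$, $B=b+lc$, $C=-c$ to halve the work), and only then extracts $\chi_y(M)=1-2y+2y^2-y^3$ from the Chern numbers via $\int_M T_0^3=\int_M\frac{c_1c_2}{24}$ and $\int_M T_1^3=\int_M\frac{c_1c_2-12c_3}{24}$. You reverse the logic: you get $\chi_y(M)$ directly from the sign-counting form of the Kosniowski formula (which is the paper's own Theorem \ref{t4} specialized to isolated fixed points, so you need not re-derive the rigidity/limit argument) and then read off $\int_M c_1c_2=24\operatorname{Td}(M)=24$. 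This is slicker and avoids the messiest computation, at the cost of a sign case-analysis in Figure \ref{f4b}: with $c(\xi)$ dominant the individual counts $n_{p_4},n_{p_5},n_{p_6}$ depend on the signs of $k$, $l$, and $k+l$, though in every case the multiset is $\{0,1,1,2,2,3\}$, and note also that the paper's Proposition \ref{p6} already narrows $\chi_y(M)$ to two candidates, so exhibiting a single vertex with all three weights of one sign for exactly one choice of orientation of $\xi$ would suffice. For $\int_M c_1^3$ both you and the paper use the same direct ABBV sum; the one soft spot in your plan is the final step of ``pinning down the polynomial in $k$ by evaluating at a few small integer values,'' which is not rigorous without an a priori bound on the degree in $k$ of the simplified expression (integrality at integers does not force polynomiality of a rational function). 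The paper instead groups the six terms into three pairs sharing denominators $ab(c+ka)$, $a(a+b)c$, $b(a+b)(c+k(a+b))$, cancels the $k$-dependent factors explicitly, and reduces to $\frac{2}{ab(a+b)}\cdot(k^2+27)ab(a+b)$; carrying out that cancellation (or verifying exact division after clearing the common denominator) is what closes your argument.
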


We note that for the manifold $M_1(k)$, if $|k|$ is large, then the Chern number $\int_M c_1^3=2(k^2+27)$ is large.

For a 4-dimensional almost complex torus manifold $M$, it need not hold that $a_0(M) \leq a_1(M)$, where $\chi_y(M)=a_0(M)-a_1(M) y+a_2(M) y^2$ is the Hirzebruch $\chi$-genus of $M$; see \cite{J3, Ma2}.

Let $M$ be a 6-dimensional almost complex torus manifold with Euler number 6. In theory, it is possible that the Hirzebruch $\chi_y$-genus of $M$ is $2-y+y^2-2y^3$; see Proposition \ref{p6}. On the other hand, Theorem \ref{t81} shows that this is not possible. It is plausible that for a 6-dimensional almost complex torus manifold $M$, if we let $\chi_y(M)=\sum_{i=0}^3 a_i(M) \cdot (-y)^i$ be the Hirzebruch $\chi_y$-genus of $M$, then $a_0(M)$ is bounded above by some increasing function of $a_1(M)$.

\begin{que}
Let $M$ be a 6-dimensional almost complex torus manifold. Then does $a_0(M) \leq f(a_1(M))$ hold for some increasing function $f(x)$? Here, $\chi_y(M)=\sum_{i=0}^3 a_i(M) \cdot (-y)^i$ is the Hirzebruch $\chi_y$-genus of $M$.
\end{que}

In the language of the theory of multi-fans \cite{Ma2}, we can rephrase this question as, if we can use $2(a_0(M)+a_1(M))$ fans to cover $\mathbb{R}^3$ $a_0(M)$-times, where $a_0(M)$ is the Todd genus of $M$ and $2(a_0(M)+a_1(M))$ is the total number of fixed points.

Let $M$ be a 6-dimensional compact almost complex manifold. In Examples \ref{e1} and \ref{e2}, we constructed examples of $M$ equipped with $T^3$-actions with 6 fixed points. On the other hand, if a smaller dimensional torus acts on $M$ with 6 fixed points, then there are more examples.
In \cite{J5}, the first author showed that if the circle group $S^1$ acts on $M$ with 4 fixed points, then 6 cases occur for the weights at the fixed points, and discussed the existence of such a manifold in each case. In particular, there is such a manifold $M$ with Todd genus 0. If we blow up a fixed point of $M$, then we get another $S^1$-manifold with 6 fixed points, whose Todd genus is 0. Such a manifold is distinct from the manifolds in Theorem \ref{t11}; the latter manifolds have Todd genus 1, as shown in Theorem \ref{t81}.

We discuss the ideas of the proof of Theorem \ref{t11}. 
The first ingredient is an isotropy submanifold, which is the set of points in a given almost complex torus manifold that are fixed by an action of some subgroup of the torus.
The second is the Hirzebruch $\chi_y$-genus and the Kosniowski formula (Theorem \ref{t4}).
The third is that a subgraph describes an almost complex torus submanifold (Proposition \ref{p7}).
The fourth is a relationship between weights at fixed points in an isotropy submanifold (Lemma \ref{l5}).

The proof of Theorem \ref{t11} goes as follows. Let $M$ be a 6-dimensional almost complex torus manifold with Euler number 6. The purpose of Theorem \ref{t11} is to determine graphs describing $M$.

First, in Lemma \ref{p3} we use the Hirzebruch $\chi_y$-genera of isotropy submanifolds and the Kosniowski formula (Theorem \ref{t4}) to obtain information on a 4-dimensional almost complex torus submanifold. 

Second, in Proposition \ref{p9} we show that Figure \ref{f1a} or Figure \ref{f1b} describes $M$.
Suppose that Figure \ref{f1b} describes $M$. Using Proposition \ref{p7} and Lemma \ref{l5}, in Proposition \ref{c1} we show that Figure \ref{f1b} does not describe $M$. 
Therefore, Figure \ref{f1a} describes $M$. 

What remains is to determine the labels of Figure \ref{f1a}. 
In Lemma \ref{c2}, using Proposition \ref{p7} and Lemma \ref{p3}, we find subgraphs describing 4-dimensional almost complex torus submanifolds. 
In Lemma \ref{l6}, we determine a graph describing a 4-dimensional almost complex torus manifold with 3 or 4 fixed points. 
Then from Lemmas \ref{c2} and \ref{l6} we show in Lemma \ref{l4} that several cases occur for relationships between the labels of the subgraphs. Then our task is to determine the labels in each case that occurs, which we do so in the proof of Theorem \ref{t11} in Section \ref{s7}. 

The structure of this paper is as follows.

In Section \ref{s2}, we review background and preliminaries. In Section \ref{s3}, we construct two 6-dimensional almost complex torus manifolds, one described by Figure \ref{f4a} (Example \ref{ex1}) and another described by Figure \ref{f4b} (Example \ref{ex2}). In Section \ref{s4}, we discuss (equivariant) blow up of an almost complex torus manifold and show that an (equivariant) blow up of a linear $T^3$-action on the complex projective space $\mathbb{CP}^3$ is the manifold $M_1(k)$ with $k=1$ that we construct in Example \ref{ex1}. 

Let $M$ be a 6-dimensional almost complex torus manifold with Euler number 6. In Section \ref{s5}, we study the Hirzebruch $\chi_y$-genus of an almost complex torus manifold. 
In Section \ref{s6}, we show that Figure \ref{f1a} describes $M$ (Propositions \ref{p9} and \ref{c1}), and we find subgraphs describing 4-dimensional almost complex torus submanifolds of $M$ (Lemma \ref{c2}). With all of these, in Section \ref{s7} we prove Theorem \ref{t11}. In Section \ref{s8}, we prove Theorem \ref{t81}; we determine the Chern numbers and the Hirzebruch $\chi_y$-genus of $M$.

\section{Background and preliminaries} \label{s2}

Let $M$ be a manifold. An \textbf{almost complex structure} on $M$ is a bundle map $J:TM \to TM$ on the tangent bundle of $M$, which restricts to a linear complex structure on each tangent space. A pair $(M,J)$ of a manifold $M$ and an almost complex structure $J$ on $M$ is called an \textbf{almost complex manifold}. By definition, any almost complex manifold has even dimension. For an action of a group $G$ on an almost complex manifold $(M,J)$, we say that the group action \textbf{preserves the almost complex structure} if $dg \circ J = J \circ dg$ for all elements $g$ in the group $G$. Throughout this paper, we assume that any group action on an almost complex manifold preserves the almost complex structure.

Let $(M,J)$ be a $2n$-dimensional almost complex manifold and let a $k$-dimensional torus $T^k$ act on $M$. Let $F$ be a fixed component and $p$ a point in $F$. The normal space $N_p F$ of $F$ at $p$ decomposes into the sum of complex 1-dimensional vector spaces $L_1, \cdots , L_{m}$, where on each $L_i$ the torus $T^k$ acts by multiplication by $g^{w_{p,i}}$ for all $g \in T^k$, for some non-zero element $w_{p,i}$ of $\mathbb{Z}^k$, $1 \leq i \leq m$. Here, $2m$ is the codimension of $F$ in $M$. These elements $w_{p,1}, \cdots, w_{p,m}$ are the same for all $p \in F$, and called the \textbf{weights} of $F$.

For a torus action on a compact almost complex manifold with isolated fixed points, there exists a labeled directed multigraph that contains information on weights at the fixed points and isotropy submanifolds \cite{J4}, which is an extension of the result for circle actions \cite{GS, JT}. The vertex set of the multigraph is the fixed point set of the manifold. If an edge $e$ of the multigraph has label $w$, then a fixed point corresponding to the initial vertex of $e$ has weight $w$, a fixed point corresponding to the terminal vertex of $e$ has weight $-w$, and the two fixed points are in the same component of an isotropy submanifold.

\begin{definition}\label{d2}
A \textbf{labeled directed k-multigraph} $\Gamma$ is a set $V$ of vertices, a set  $E$ of edges, maps $i : E\rightarrow V$ and $t : E\rightarrow V$ giving the initial and terminal vertices of each edge, and a map $w : E \rightarrow \mathbb{Z}^k$ giving the label of each edge.
\end{definition}

For an element $w = (w_1, \cdots,w_k)$ in $\mathbb{Z}^k$, by $\ker w$ we mean the subgroup of $T^k$ whose elements fix $w$. That is,
\begin{center}$\ker w = \{g=(g_1, \cdots, g_k) \in T^k \subset \mathbb{C}^k | g^w := g_1^{w_1}\cdots g_k^{w_k} =1\}$.\end{center}
If a group $G$ acts on a manifold $M$, we denote by $M^G$ its fixed point set, that is, 
\begin{center}
$M^G = \{m\in M | g\cdot m = m,\forall g\in G\}$.
\end{center}

\begin{definition}\label{d1}
Let a $k$-dimensional torus $T^k$ act on a compact almost complex manifold $M$ with isolated fixed points. We say that a (labeled directed $k$-)multigraph $\Gamma = (V, E)$ \textbf{describes} $M$ if the following hold:
\begin{enumerate}[(i)]
\item The vertex set $V$ is equal to the fixed point set $M^{T^k}$.
\item The multiset of the weights at $p$ is $\{w(e) | i(e) = p\} \cup \{-w(e) | t(e) = p\}$
for all $p \in M^{T^k}$.
\item For each edge $e$, the two endpoints $i(e)$ and $t(e)$ are in the same component of the isotropy submanifold $M^{\ker w(e)}$.
\end{enumerate}
\end{definition}

\begin{pro}\cite{J4} \label{p8}
Let a $k$-dimensional torus $T^k$ act on a compact almost complex manifold $M$ with isolated fixed points. Then there exists a (labeled directed $k$-)multigraph $\Gamma$ describing $M$ that has no self-loops.
\end{pro}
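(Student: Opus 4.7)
The strategy is to reduce to the known circle action case by exploiting isotropy submanifolds as ``bridges.'' First I would, for each fixed point $p \in M^{T^k}$ and each weight $w = w_{p,i}$ at $p$, factor $w = n \cdot w_0$ uniquely (after fixing a sign convention) with $w_0 \in \mathbb{Z}^k$ primitive and $n$ a nonzero integer, and collect the resulting finite set $\mathcal{W}$ of primitive directions appearing. For each $w_0 \in \mathcal{W}$, the subgroup $\ker w_0 \leq T^k$ has identity component a $(k-1)$-torus, and $M^{\ker w_0}$ is a compact almost complex submanifold of $M$ (the fixed set of a compact group acting by almost complex automorphisms is itself almost complex, since the linearized action commutes with $J$). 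Each connected component $N$ of $M^{\ker w_0}$ carries an effective action of $S^1 \simeq T^k/\ker w_0$ whose fixed point set equals $M^{T^k} \cap N$ and is therefore isolated.

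Next I would invoke the $S^1$-version of the proposition, cited as \cite{GS, JT}, on each such $N$, producing a labeled directed multigraph $\Gamma_N$ with integer labels and no self-loops, with vertex set $N^{T^k}$, satisfying the analogue of Definition \ref{d1} for the $S^1$-action on $N$. The translation is that the $S^1$-weight at $p \in N^{T^k}$ in a tangent direction of $N$ equals the integer $n$ such that the corresponding $T^k$-weight at $p$ in $M$ is $n \cdot w_0$; the $T^k$-weights at $p$ tangent to $N$ are precisely those parallel to $w_0$. I would then assemble the desired $T^k$-multigraph $\Gamma$ on vertex set $M^{T^k}$ by taking, for every pair $(w_0,N)$ and every edge $e$ of $\Gamma_N$ with label $n(e) \in \mathbb{Z}$, an edge of $\Gamma$ with the same endpoints and orientation carrying the label $n(e) \cdot w_0 \in \mathbb{Z}^k$.

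The remainder is verification of Definition \ref{d1}. Condition (i) is tautological. For (ii), each $T^k$-weight $w_{p,i} = n w_0$ at $p$ arises from exactly one edge of exactly one $\Gamma_N$ (namely the one with $N$ the component of $M^{\ker w_0}$ through $p$), contributing exactly $n w_0 = w_{p,i}$ at $p$ in $\Gamma$. For (iii), an edge of $\Gamma$ labeled $w = n w_0$ that originated in $\Gamma_N$ has both endpoints in the connected set $N \subseteq M^{\ker w_0} \subseteq M^{\ker w}$ (using that $g^{w_0}=1$ implies $g^{nw_0}=1$), so they lie in the same component of $M^{\ker w}$. Absence of self-loops passes directly from each $\Gamma_N$ to $\Gamma$.

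The main subtlety I expect is the case when several weights at the same fixed point $p$ are parallel: all such weights live in the tangent space to the one isotropy component $N$ at $p$, and must be matched within the single multigraph $\Gamma_N$. This is handled cleanly because $T_p N$ is precisely the sum of $T^k$-weight directions at $p$ parallel to $w_0$, so $\Gamma_N$ records each $S^1$-weight of $T_p N$ (with its correct multiplicity) as a distinct edge and the multiplicities in $\Gamma$ match those in $\Gamma_M$. Beyond this bookkeeping, the argument is essentially a faithful transfer of the circle action construction of \cite{GS, JT}.
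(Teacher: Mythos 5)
The paper does not prove Proposition \ref{p8}; it is quoted from \cite{J4}. Your reduction to the circle case via the isotropy submanifolds $M^{\ker w_0}$, one for each primitive direction $w_0$ of a weight, is the right idea and is essentially how the cited result is obtained, and your bookkeeping for conditions (i) and (ii) of Definition \ref{d1} is fine. However, your verification of condition (iii) contains a genuine error: you write $N \subseteq M^{\ker w_0} \subseteq M^{\ker w}$, but the inclusion of fixed point sets goes the other way. Since $g^{w_0}=1$ implies $g^{nw_0}=1$, one has $\ker w_0 \subseteq \ker(nw_0)=\ker w$, and therefore $M^{\ker w} \subseteq M^{\ker w_0}$, not the reverse. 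When $|n|>1$ the component $N$ of $M^{\ker w_0}$ is in general \emph{not} contained in $M^{\ker w}$, so the connectedness of $N$ alone does not place the two endpoints of the edge in a common component of $M^{\ker w}$, which is exactly what condition (iii) demands.

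The gap is repairable, but only by using the full strength of what the circle-action multigraph $\Gamma_N$ gives you, not just its vertex and edge sets. Condition (iii) of Definition \ref{d1} applied to $\Gamma_N$ (for the $S^1=T^k/\ker w_0$ action on $N$) says that the endpoints of an edge with label $n$ lie in the same component of $N^{\mathbb{Z}_{|n|}}$; since the preimage of $\mathbb{Z}_{|n|}\subset T^k/\ker w_0$ in $T^k$ is exactly $\ker(nw_0)$, one has $N^{\mathbb{Z}_{|n|}}=N\cap M^{\ker w}$, a connected subset of which lies in a single component of $M^{\ker w}$. You should also be aware that the two black boxes you invoke carry real content: the references \cite{GS, JT} treat the Hamiltonian/symplectic setting, and the existence of a describing multigraph \emph{without self-loops} for a general almost complex $S^1$-manifold with isolated fixed points (which is what your construction inherits its self-loop-freeness from) is itself one of the nontrivial points established in \cite{J4}, not something you can take for granted from the Hamiltonian references alone.
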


Moreover, a sub-multigraph of the multigraph describes an isotropy submanifold of $M$, where the submanifold is given an action of the torus $T^k$.

\begin{pro}\label{p7}
Let a $k$-dimensional torus $T^k$ act on a compact almost complex manifold $M$ with isolated fixed points. Let $\Gamma$ be a multigraph describing $M$ that has no self-loops. Let $S$ be a closed subgroup of $T^k$ and let $F$ be a component of the isotropy submanifold $M^S$. Let $\Gamma_F$ be a sub-multigraph of $\Gamma$ obtained as follows:
\begin{enumerate}
\item The vertex set of $\Gamma_F$ is $F \cap M^{T^k}$.
\item For $p \in \Gamma_F$, edges $e_i'$ of $p$ in $\Gamma_F$ are the edges $e_i$ of $p$ in $\Gamma$ whose labels $w_i$ satisfy $g^{w_i}=1$ for all $g \in S$.
\end{enumerate}
Then $\Gamma_F$ describes $F$, on which the $T^k$-action on $M$ restricts to act. Moreover, $\dim F=2m$ if and only if $\Gamma_F$ is $m$-valent; each vertex of $\Gamma_F$ has $m$ edges.
\end{pro}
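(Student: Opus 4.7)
The plan is to verify directly that $\Gamma_F$ satisfies the three conditions (i)--(iii) of Definition \ref{d1} for the restricted $T^k$-action on $F$, and then read off the valency statement. First I observe that because $T^k$ preserves $J$ and $S \subseteq T^k$ is closed, every component of $M^S$ is a closed almost complex submanifold of $M$; in particular $F$ is such a submanifold, and the $T^k$-action on $M$ restricts to an action on $F$. Since $F \subseteq M^S$, the fixed points of $T^k$ on $F$ are exactly $F \cap M^{T^k}$, which by construction is the vertex set of $\Gamma_F$. This gives condition (i).

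For condition (ii), I would use that at any fixed point $p \in F \cap M^{T^k}$, the tangent space $T_p F$ is the $S$-fixed subspace of $T_p M$. Decomposing $T_p M$ into weight lines $L_1,\dots,L_n$ with weights $w_{p,1},\dots,w_{p,n}$, a line $L_i$ sits inside $T_p F$ if and only if $S$ acts trivially on $L_i$, i.e.\ $g^{w_{p,i}} = 1$ for all $g \in S$. Hence the multiset of weights at $p$ viewed as a fixed point of $T^k$ on $F$ is precisely the sub-multiset of weights at $p$ in $M$ whose labels are killed by $S$. Since $\Gamma$ describes $M$, this sub-multiset equals $\{w(e') : i(e') = p\} \cup \{-w(e') : t(e') = p\}$ taken over edges $e'$ of $\Gamma_F$, which is condition (ii).

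For condition (iii), let $e$ be an edge of $\Gamma_F$. By the definition of $\Gamma_F$ the label $w(e)$ satisfies $g^{w(e)} = 1$ for all $g \in S$, so $S \subseteq \ker w(e)$ and consequently $M^{\ker w(e)} \subseteq M^S$. Let $C$ be the component of $M^{\ker w(e)}$ containing both $i(e)$ and $t(e)$, which exists by condition (iii) for $\Gamma$ describing $M$. Then $C$ is connected, contains the point $i(e) \in F$, and is contained in $M^S$, so $C$ lies in the component $F$ of $M^S$. Hence $C \subseteq F \cap M^{\ker w(e)} = F^{\ker w(e)}$, and $i(e)$ and $t(e)$ belong to the same component of $F^{\ker w(e)}$. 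The valency claim is then immediate: the number of edges of $\Gamma_F$ incident to a fixed point $p$ equals the number of weights of $F$ at $p$, which is $\tfrac{1}{2}\dim F$, so $\Gamma_F$ is $m$-valent if and only if $\dim F = 2m$. The only step where any real care is needed is the connectedness argument identifying $C$ as a subset of the chosen component $F$, and I expect this to be the sole obstacle.
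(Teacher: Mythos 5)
Your proposal is correct and follows essentially the same route as the paper's own proof: verify conditions (i)--(iii) of Definition \ref{d1} directly, using that $T_pF$ is the $S$-fixed part of $T_pM$ for (ii), and for (iii) the observation that $S\subseteq\ker w(e)$ forces the connected component of $M^{\ker w(e)}$ containing the two endpoints to lie inside the component $F$ of $M^S$. The connectedness step you flag as the delicate one is handled in the paper exactly as you handle it, so there is no gap.
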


\begin{proof}
The torus $T^k$ acts on $F$ as the restriction of the $T^k$-action on $M$ to $F$. Thus, $F^{T^k}=F \cap M^{T^k}$ and hence $\Gamma_F$ satisfies (i) of Definition \ref{d1} for the $T^k$-action on $F$.

Let $e$ be an edge of $\Gamma_F$ from $p$ to $q$ with label $w$. Because $\Gamma_F$ is a sub-multigraph of $\Gamma$, $p$ has weight $w$ and $q$ has weight $-w$. Because $g^w=1$ for all $g \in S$ by (2), the weight $w$ is a weight of the tangent space $T_pF$ to $F$ at $p$. Conversely, for $p \in F^{T^k}$, if a weight $w'$ in the tangent space $T_pM$ of $M$ satisfies $g^{w'}=1$ for all $g \in S$, then it is a weight of the tangent space $T_pF$ of $F$, and by (2) such a weight $w'$ is the label of some edge of $\Gamma_F$ at $p$. Thus, $\Gamma_F$ satisfies (ii) of Definition \ref{d1} for the $T^k$-action on $F$.

Let $e$ be an edge of $\Gamma_F$ from $p$ to $q$ with label $w$. Because $e$ is an edge of $\Gamma$, by (iii) of Definition \ref{d1}, $p$ and $q$ are in the same component $G$ of $M^{\ker w}$. By (2), $g^w=1$ for all $g \in S$. Thus, $S \subset \ker w$ and hence $G \subset M^{\ker w} \subset M^S$. Because $F$ is a connected component of $M^S$ containing $p$ and $q$ and $G$ is a connected component of $M^{\ker w} \subset M^S$ containing $p$ and $q$, this implies that $G \subset F$. 
For any $x \in G$, every weight $w'$ in $T_xG$ satisfies $g^{w'}=1$ for $g \in \ker w$. Therefore, it follows that $G \subset F^{\ker w}$. Hence, $\Gamma_F$ satisfies (iii) of Definition \ref{d1} for the $T^k$-action on $F$.

Since $\Gamma_F$ describes $F$, $\dim F=2m$ if and only if $\Gamma_F$ is $m$-valent, by (ii) of Definition \ref{d1}. \end{proof}

The following example illustrates Proposition \ref{p7}.

\begin{exa}
Let a 2-dimensional torus $T^2$ act on $\mathbb{CP}^3$ by
\begin{center}
$\displaystyle (g_1, g_2) \cdot [z_0:z_1:z_2:z_3]=[z_0:g_1z_1:g_1^2z_2:g_2z_3]$ 
\end{center}
for all $(g_1,g_2)\in T^2$ and for all $[z_0:z_1:z_2:z_3] \in \mathbb{CP}^3$. The action has 4 fixed points, $p_1=[1:0:0:0]$, $p_2=[0:1:0:0]$, $p_3=[0:0:1:0]$, and $p_4=[0:0:0:1]$ that have weights $\{(1,0), (2,0), (0,1)\}$, $\{(-1,0), (1,0), (-1,1)\}$, $\{(-2,0), (-1,0), (-2,1)\}$, and $\{(-2,0),$ $(1,-1),$ $(2,-1)\}$, respectively. Figure \ref{f14a} describes this action on $\mathbb{CP}^3$.

Consider the subgroup $\{1\}\times S^1$ of $T^2$. Its fixed point set is $\{[z_0:z_1:z_2:0]\} \cup [0:0:0:1]$. The 2-dimensional torus $T^2$ acts on the former fixed component $F_0:=\{[z_0:z_1:z_2:0]\}$ by
\begin{center} $\displaystyle (g_1,g_2)\cdot[z_0:z_1:z_2:0]=[z_0:g_1z_1:g_1^2z_2:0]$ \end{center}
for all $(g_1,g_2)\in T^2$, having fixed points $p_1=[1:0:0:0]$, $p_2=[0:1:0:0]$, and $p_3=[0:0:1:0]$ whose weights are $\{(1,0),(2,0)\}$, $\{(-1,0),(1,0)\}$, and $\{(-2,0),(-1,0)\}$, respectively. Thus, Figure \ref{f14b} describes the $T^2$-action on $F_0$.

Next, consider the subgroup $\mathbb Z_2\times \{1\}$ of $T^2$, where $\mathbb Z_2=\{1,-1\}$. Its fixed point set is $\{[z_0:0:z_2:z_3]\}\cup[0:1:0:0]$. The 2-dimensional torus $T^2$ acts on the former fixed component $F_1:=\{[z_0:0:z_2:z_3]\}$ by
\begin{center} $\displaystyle (g_1,g_2) \cdot [z_0:0:z_2:z_3]=[z_0:0:g_1^2z_2:g_2z_3]$\end{center}
for all $(g_1,g_2)\in T^2$, having fixed points $p_1=[1:0:0:0]$, $p_3=[0:0:1:0]$, and $p_4=[0:0:0:1]$ whose weights are $\{(2,0),(0,1)\}$, $\{(-2,0),(-2,1)\}$, and $\{(0,-1),(2,-1)\}$, respectively. Thus, Figure \ref{f14c} describes the $T^2$-action on $F_1$.

\begin{figure}
\begin{center}
\begin{subfigure}[b][6.5cm][s]{.30\textwidth}
\centering
\begin{tikzpicture}[state/.style ={circle, draw}]
\node[vertex] (a) at (0, 0) {$p_{1}$};
\node[vertex] (b) at (1.5, 1.7) {$p_{2}$};
\node[vertex] (c) at (1.5, 3.4) {$p_{3}$};
\node[vertex] (d) at (0, 5.1) {$p_{4}$};
\path (a) [->] edge node [right] {$(1,0)$} (b);
\path (b) [->] edge node [right] {$(1,0)$} (c);
\path (a) [->] edge node [left] {$(2,0)$} (c);
\path (a) [->] edge node [left] {$(0,1)$} (d);
\path (b) [->] edge node [left] {$(-1,1)$} (d);
\path (c) [->] edge node [right] {$(-2,1)$} (d);
\end{tikzpicture}
\caption{A multigraph describing $\mathbb{CP}^3$}\label{f14a}
\vspace{\baselineskip}
\end{subfigure}
~
\begin{subfigure}[b][6.5cm][s]{.30\textwidth}
\centering
\vspace{16.5mm}
\begin{tikzpicture}[state/.style ={circle, draw}]
\node[vertex] (a) at (0, 0) {$p_{1}$};
\node[vertex] (b) at (1.5, 1.7) {$p_{2}$};
\node[vertex] (c) at (1.5, 3.4) {$p_{3}$};
\path (a) [->] edge node [right] {$(1,0)$} (b);
\path (b) [->] edge node [right] {$(1,0)$} (c);
\path (a) [->] edge node [left] {$(2,0)$} (c);
\end{tikzpicture}
\caption{A sub-multigraph describing $F_0$}\label{f14b}
\vspace{\baselineskip}
\end{subfigure}\qquad
\vspace{\baselineskip}
\begin{subfigure}[b][6.5cm][s]{.30\textwidth}
\centering
\begin{tikzpicture}[state/.style ={circle, draw}]
\node[vertex] (a) at (0, 0) {$p_{1}$};
\node[vertex] (c) at (1.5, 3.4) {$p_{3}$};
\node[vertex] (d) at (0, 5.1) {$p_{4}$};
\path (a) [->] edge node [right] {$(2,0)$} (c);
\path (a) [->] edge node [left] {$(0,1)$} (d);
\path (c) [->] edge node [right] {$(-2,1)$} (d);
\end{tikzpicture}
\caption{A sub-multigraph describing $F_1$}\label{f14c}
\vspace{\baselineskip}
\end{subfigure}
\caption{}\label{f14}
\end{center}
\end{figure}
\end{exa}

For an almost complex torus manifold, a multigraph describing it is a graph without any self-loops. 

\begin{pro}\cite{J4}\label{p2}
Let $M$ be an almost complex torus manifold. Then there is a multigraph describing $M$ that has no multiple edges and no self-loops.
\end{pro}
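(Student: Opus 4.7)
The plan is to begin with a multigraph $\Gamma$ describing $M$ without self-loops, whose existence is guaranteed by Proposition~\ref{p8}, and then show that the extra hypothesis of $M$ being an almost complex torus manifold forces $\Gamma$ to be automatically free of multiple edges as well. The key features I will exploit are that $\dim M = 2n$ and the acting torus has the same dimension $n$, which pin down the local structure of isotropy $2$-spheres at each fixed point.

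First I would observe that at every fixed point $p$, the $n$ weights $w_{p,1},\dots,w_{p,n}\in \mathbb{Z}^n$ of the isotropy representation on $T_pM$ are linearly independent over $\mathbb{R}$: if they were not, their common annihilator would contain a positive-dimensional subtorus $S\subset T^n$ acting trivially on $T_pM$, and the equivariant slice theorem would force $S$ to act trivially on a neighborhood of $p$, contradicting effectiveness. Consequently no two weights at $p$ are proportional, and Proposition~\ref{p7} then implies that for each primitive weight $w$ at $p$, the component of $M^{\ker w}$ containing $p$ is $2$-dimensional (only $w$ itself among the weights at $p$ is proportional to $w$), hence a $2$-sphere with exactly two $T^n$-fixed points. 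This yields a canonical ``partner'' fixed point $q_w$ for each weight $w$ at $p$, and every edge at $p$ in $\Gamma$ must connect $p$ with one of the $q_w$ carrying label $\pm w$. It therefore suffices to show that the partners $q_{w_{p,1}},\dots,q_{w_{p,n}}$ are pairwise distinct.

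Suppose for contradiction that two distinct weights $w_1,w_2$ at $p$ share a partner $q$. The subtorus $K:=\ker w_1\cap \ker w_2$ has codimension $2$ by linear independence of $w_1,w_2$, and the component $F$ of $M^K$ through $p$ is $4$-dimensional by the dimension count in Proposition~\ref{p7}, since exactly the two weights $w_1,w_2$ at $p$ vanish on $K$. Then $F$ is a $4$-dimensional almost complex torus manifold for the quotient $T^n/K\simeq T^2$, and repeating the partner analysis inside $F$ forces its only $T^2$-fixed points to be $p$ and $q$, so $\chi(F)=2$.

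The main obstacle is then to rule out a $4$-dimensional almost complex torus manifold with only two fixed points. One route is the equivariant index theorem: if $p$ has weights $\{w_1,w_2\}$ and $q$ has weights $\{-w_1,-w_2\}$, then substituting $g=e^{t\xi}$ for a generic $\xi$ into the Atiyah--Bott localization formula for the Todd genus produces a surviving $t^{-2}$ pole of residue $\tfrac{2}{\langle w_1,\xi\rangle\langle w_2,\xi\rangle}\neq 0$, contradicting that the Todd genus is a finite integer. Alternatively, one invokes the lower bound from \cite{J6} that a $2n$-dimensional almost complex torus manifold has at least $n+1$ fixed points, giving at least $3$ in dimension $4$. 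Either contradiction shows that distinct weights at $p$ have distinct partners, so $\Gamma$ has no multiple edges and the proof is complete.
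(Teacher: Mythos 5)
Your first two-thirds are sound: fixed points of an almost complex torus manifold are isolated with the $n$ weights at each fixed point linearly independent (your effectiveness/slice argument is correct), each weight $w$ at $p$ gives a $2$-dimensional component of $M^{\ker w}$ through $p$ (the annihilator of $\ker w$ in the character lattice is $\mathbb{Z}w$, so independence leaves only $w$ itself), hence an isotropy $2$-sphere with a unique partner fixed point, and by Definition \ref{d1}(iii) every edge of a self-loop-free multigraph from Proposition \ref{p8} joins $p$ to one of these partners. The reduction of a hypothetical multiple edge to a $4$-dimensional almost complex torus submanifold $F$ containing $p$ and $q$ with weights $\{w_1,w_2\}$ and $\{-w_1,-w_2\}$ is also correct. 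The gap is the sentence ``repeating the partner analysis inside $F$ forces its only $T^2$-fixed points to be $p$ and $q$.'' The partner analysis only controls the connected component of the describing multigraph of $F$ containing $p$: it shows that component is the double edge $p\rightrightarrows q$. Nothing rules out further fixed points of $F$ lying on isotropy spheres disjoint from $N_1\cup N_2$; the multigraph of $F$ has no a priori connectivity (that would require something like equivariant formality, which is not assumed). Consequently $\chi(F)=2$ does not follow, and both of your closing contradictions collapse: extra fixed points can cancel the $t^{-2}$ residue in the Atiyah--Bott computation, and $F$ having at least $3$ fixed points (Proposition \ref{p4}) is then perfectly consistent with your configuration.

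The fix is available in the paper's own toolkit and makes the ending shorter: apply Lemma \ref{l5} to the $T^2$-action on $F$ with $w=w_1$. Since $p$ and $q$ lie in the same component of $F^{\ker w_1}$ (the sphere $N_1$), the weight multisets $\{w_1,w_2\}$ at $p$ and $\{-w_1,-w_2\}$ at $q$ must be equal modulo $w_1$, forcing $w_2\equiv -w_2 \bmod w_1$, i.e.\ $2w_2\in\mathbb{Z}w_1$, which contradicts the linear independence of the weights at $p$ that you established in your first step (and which holds in $F$ by the same argument). Note that this congruence trick genuinely needs the passage to the $4$-dimensional $F$: applied in $M$ itself, the multiset equality modulo $w_1$ can be satisfied by matching $w_2$ against one of the remaining $n-2$ weights at $q$, so your dimension reduction is the right move --- only your final step needs replacing. (For the record, this proposition is quoted from \cite{J4}, so the paper contains no internal proof; but Proposition \ref{p7} and Lemma \ref{l5} are exactly the ingredients it supplies for arguments of this kind, e.g.\ in Proposition \ref{c1}.)
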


Moreover, each edge corresponds to an isotropy sphere.

\begin{pro}\cite{J4}
Let $M$ be a $2n$-dimensional almost complex torus manifold. Then for each fixed point $p$, there are $n$ 2-spheres on each of which the $T^n$-action on $M$ restricts to act, sharing $p$ as one fixed point in common with weight $w_{p,i}$, but no two of them share other fixed points. Here, $w_{p,1}, \cdots, w_{p,n}$ are the weights at $p$.
\end{pro}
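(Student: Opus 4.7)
My plan is to realize the $n$ two-spheres as connected components of isotropy submanifolds, one attached to each weight at $p$, and then read off all the required properties from Propositions \ref{p7} and \ref{p2}.

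The key preliminary fact is that the weights $w_{p,1},\ldots,w_{p,n}$ at $p$ are pairwise linearly independent over $\mathbb{R}$. Indeed, if they spanned only a proper subspace of $\mathbb{R}^n$, one could choose a nonzero $\xi \in \mathbb{R}^n$ with $\langle w_{p,i},\xi\rangle=0$ for every $i$; the $1$-parameter subgroup $\{\exp(t\xi)\}$ would then act trivially on $T_pM$, and by the equivariant linearization theorem its closure would fix an open neighborhood of $p$. Since the fixed set of a torus action is closed and $M$ is connected, this closed subtorus would act trivially on all of $M$, contradicting effectiveness of the $T^n$-action. Hence $w_{p,1},\ldots,w_{p,n}$ form an $\mathbb{R}$-basis of $\mathbb{R}^n$ and, in particular, no two of them are parallel.

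Next, for each $i$ I would set $S_i := \ker w_{p,i}$ and take $F_i$ to be the connected component of the isotropy submanifold $M^{S_i}$ containing $p$. By Proposition \ref{p7}, the sub-multigraph $\Gamma_{F_i}$ describes the restricted $T^n$-action on $F_i$, and its edges at $p$ are precisely those edges of $\Gamma$ at $p$ whose labels $w$ satisfy $g^w=1$ for all $g\in S_i$. These labels are exactly the characters in the annihilator of $S_i$, which is the rank-one group $\mathbb{Z}\cdot v_i$, where $v_i$ is the primitive vector along $w_{p,i}$. By pairwise linear independence established above, $w_{p,i}$ is the only weight at $p$ lying in $\mathbb{Z}\cdot v_i$, so $p$ has valence exactly $1$ in $\Gamma_{F_i}$. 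Proposition \ref{p7} then forces $\dim F_i=2$.

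A compact connected almost complex $2$-manifold carrying an effective $S^1$-action (here induced by the quotient $T^n/S_i$) and having at least one fixed point must be diffeomorphic to $S^2$, so each $F_i$ is indeed a $2$-sphere through $p$; the unique edge at $p$ in $\Gamma_{F_i}$ carries label $w_{p,i}$, so the weight of the restricted action at $p$ is $w_{p,i}$. Finally, by Proposition \ref{p2} we may assume $\Gamma$ has no multiple edges, so the $n$ distinct edges at $p$ terminate at $n$ distinct vertices; equivalently, no two of the spheres $F_i$ share a second fixed point. The main obstacle I anticipate is the pairwise linear independence step, since once that is in hand the remaining identifications are essentially bookkeeping via Propositions \ref{p7} and \ref{p2}.
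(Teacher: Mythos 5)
The paper does not prove this proposition; it is quoted from \cite{J4}, so there is no in-paper argument to compare against. Judged on its own, your derivation is correct and is essentially the standard one: for each weight $w_{p,i}$ you take the component $F_i$ of $M^{\ker w_{p,i}}$ through $p$, use the spanning of the weights (forced by effectiveness via linearization and the open-and-closed fixed-set argument) to see that $p$ has valence one in $\Gamma_{F_i}$, conclude $\dim F_i=2$ from Proposition \ref{p7}, and identify $F_i$ with $S^2$ via positivity of the Euler characteristic. Two small remarks. First, the annihilator of $\ker w_{p,i}$ is $\mathbb{Z}\cdot w_{p,i}$ rather than $\mathbb{Z}\cdot v_i$ with $v_i$ primitive (the double annihilator of a subgroup of $\mathbb{Z}^n$ returns that subgroup, not its saturation); this only makes the valence-one claim easier, so nothing breaks. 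Second, your final step derives the ``distinct second fixed points'' claim entirely from the no-multiple-edges clause of Proposition \ref{p2}; since in the source \cite{J4} that clause is itself obtained from the isotropy-sphere structure, your argument is a legitimate deduction within this paper's axiomatics but would be circular as a from-scratch proof of the proposition. As a derivation from the tools this paper actually states, it stands.
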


An almost complex manifold $M$ has at least $\frac{1}{2}\dim M+1$ fixed points \cite{Da, Ma2}. In other words, the Euler number of an almost complex torus manifold $M$ is at least $\frac{1}{2}\dim M+1$, because $M$ has finitely many fixed points, the Euler number of $M$ is equal to the sum of the Euler numbers of fixed points (Theorem \ref{t3}), and the Euler number of a point is 1. Proposition \ref{p2} implies that a graph describing $M$ has at least $\frac{1}{2}\dim M+1$ vertices, proving the same result.

\begin{pro}\cite{Da, J4, Ma2}\label{p4}
Let $M$ be an almost complex torus manifold. Then there are at least $\frac{1}{2}\dim M + 1$ fixed points. Alternatively, the Euler number of $M$ is at least $\frac{1}{2}\dim M + 1$.
\end{pro}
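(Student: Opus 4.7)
The plan is to read the bound directly off the combinatorial structure provided by Proposition \ref{p2}, together with the fact (Theorem \ref{t3}) that the Euler number of $M$ is the sum of the Euler numbers of its fixed components, which for isolated fixed points equals the number of fixed points. So it suffices to show that the number of fixed points is at least $\frac{1}{2}\dim M + 1$.

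Let $2n = \dim M$. By Proposition \ref{p2}, there is a multigraph $\Gamma$ describing $M$ with no multiple edges and no self-loops, i.e., $\Gamma$ is a simple graph. Its vertex set is the (finite) fixed point set $M^{T^n}$. For each fixed point $p$, there are exactly $n$ weights $w_{p,1},\dots,w_{p,n}$ at $p$ (since the isotropy representation at $p$ splits into $n$ complex one-dimensional subspaces). By Definition \ref{d1}(ii), the multiset of weights at $p$ is recovered from the labels of edges incident to $p$ (with a sign depending on orientation), so each edge incident to $p$ accounts for exactly one weight, and each weight comes from exactly one edge. Hence every vertex of $\Gamma$ has degree exactly $n$.

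Now the key combinatorial observation is that a simple graph on $k$ vertices has maximum degree at most $k-1$. Applied to $\Gamma$, this gives $n \leq k-1$, so $k \geq n+1$. Since $k$ is the number of fixed points of $M$, we conclude that $M$ has at least $n+1 = \frac{1}{2}\dim M+1$ fixed points, and equivalently its Euler number is at least $\frac{1}{2}\dim M + 1$.

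There is no real obstacle here: the substantive content has been front-loaded into Proposition \ref{p2} (existence of a describing graph with no multiple edges and no self-loops) and into the standard localization fact that the Euler number equals the number of fixed points. The only thing to verify carefully is the degree count at each vertex, i.e., that the bijection between edges at $p$ and weights at $p$ really produces degree exactly $n$, which follows immediately from Definition \ref{d1}(ii) once one notes that the absence of self-loops means no edge contributes twice to the degree of a single vertex.
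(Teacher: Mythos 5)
Your proof is correct and follows the same route the paper itself sketches in the paragraph preceding Proposition \ref{p4}: Theorem \ref{t3} reduces the Euler number to a fixed-point count, and Proposition \ref{p2} gives a simple $n$-valent graph on the fixed points, forcing at least $n+1$ vertices. Your write-up just makes the degree count at each vertex explicit, which is a fine elaboration of the paper's one-line argument.
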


Let a $k$-dimensional torus $T^k$ act on a manifold $M$. The \textbf{equivariant cohomology} of $M$ is
\begin{center}
$H_{T^k}^\ast (M)=H^*(M \times_{T^k} ET^k)$,
\end{center}
where $ET^k$ is a contractible space on which $T^k$ acts freely. Suppose that $M$ is compact and oriented. Then the projection map $\pi : M \times_{T^k} ET^k \to BT^k$ induces a push-forward map
\begin{center} $\pi_* : H_{T^k}^i (M; \mathbb{Z}) \to H^{i-\dim M}(BT^k;\mathbb{Z})$\end{center}
for all $i \in \mathbb{Z}$, where $BT^k$ is the classifying space of $T^k$. The projection map $\pi_*$ is also denoted by $\int_M$, and is given by integration over the fiber $M$.

\begin{theo}[The Atiyah-Bott-Berline-Vergne localization theorem]\cite{AB}\label{ab}
Let a $k$-dimensional torus $T^k$ act on a compact oriented manifold $M$. Let $\alpha \in H_{T^k}^*(M;\mathbb{Q})$. As an element of $\mathbb{Q}(t)$,
\begin{center}$\displaystyle \int_M \alpha = \sum_{F\subset M^{T^k}} \int_F \frac{\alpha|_F}{e_{T^k}(NF)}$,\end{center}
where the sum is taken over all fixed components, and $e_{T^k}(NF)$ is the equivariant Euler class of the normal bundle to $F$ in $M$.
\end{theo}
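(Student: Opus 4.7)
The plan is to prove the localization theorem via the standard strategy of reducing to computations on tubular neighborhoods of the fixed components, after establishing that equivariant cohomology classes are determined, up to $S$-torsion, by their restriction to the fixed point set. Here $S \subset H^*(BT^k;\mathbb{Q})$ denotes the multiplicative set of nonzero homogeneous elements.

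First, I would establish the key auxiliary fact: if $T^k$ acts on a manifold $N$ with $N^{T^k} = \emptyset$, then $H_{T^k}^*(N;\mathbb{Q})$ is $S$-torsion. The idea is that by the slice theorem, $N$ admits a $T^k$-invariant open cover whose members are equivariantly homotopy equivalent to orbits $T^k/H$, with each stabilizer $H$ a proper closed subgroup of $T^k$. Since $H_{T^k}^*(T^k/H;\mathbb{Q}) \cong H^*(BH;\mathbb{Q})$, the kernel of the pullback $H^*(BT^k;\mathbb{Q}) \to H^*(BH;\mathbb{Q})$ contains nonzero linear forms coming from characters that vanish on the identity component of $H$, so this cohomology is $S$-torsion. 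A Mayer--Vietoris induction on the cover propagates the property to $H_{T^k}^*(N;\mathbb{Q})$. Applying this conclusion to $N = M \setminus U$ for an equivariant tubular neighborhood $U$ of $M^{T^k}$, together with the long exact sequence of the pair, shows that the restriction $i^* \colon S^{-1}H_{T^k}^*(M;\mathbb{Q}) \to S^{-1}H_{T^k}^*(M^{T^k};\mathbb{Q})$ is an isomorphism.

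Next, for each connected fixed component $F$ of codimension $2m$, the equivariant normal bundle $NF \to F$ splits (after complexification and, if necessary, passage to a finite cover) into equivariant complex line bundles on which $T^k$ acts through definite weights; the equivariant Euler class $e_{T^k}(NF)$ is the product of the corresponding equivariant first Chern classes, whose restrictions along $F \to \mathrm{pt}$ have leading term equal to the product of the weights, a nonzero element of $S$. Hence $e_{T^k}(NF)$ is invertible in $S^{-1}H_{T^k}^*(F;\mathbb{Q})$. Using the equivariant Thom isomorphism for the tubular neighborhood of each $F$, one obtains the identity $\alpha = \sum_F (i_F)_*\bigl(i_F^*\alpha / e_{T^k}(NF)\bigr)$ in $S^{-1}H_{T^k}^*(M;\mathbb{Q})$, since both sides have the same restriction to every fixed component and $i^*$ is a localized isomorphism. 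Applying $\int_M$, which is $H^*(BT^k;\mathbb{Q})$-linear and commutes with localization, and using $\int_M \circ\, (i_F)_* = \int_F$, yields the desired formula; the right-hand side lies in $H^*(BT^k;\mathbb{Q})$ a priori (not merely in its localization) whenever $\alpha$ does, so the equality is valid in $\mathbb{Q}(t)$ as stated.

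The hard step is the $S$-torsion lemma for fixed-point-free actions, which requires the careful use of the slice theorem to produce the invariant cover and then a Mayer--Vietoris spectral sequence argument. Once this is in place, the Thom isomorphism and the Euler class computation are essentially formal consequences of the equivariant splitting principle.
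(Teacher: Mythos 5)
The paper does not prove this statement: it is quoted as a known theorem directly from the cited reference \cite{AB} of Atiyah and Bott, so there is no internal proof to compare against. Your outline correctly reproduces the standard localization argument from that source (the $S$-torsion lemma for fixed-point-free actions via the slice theorem and a finite Mayer--Vietoris induction, invertibility of $e_{T^k}(NF)$ after localizing because its leading term is the nonzero product of the weights, and the Thom-isomorphism identity $\alpha=\sum_F (i_F)_*\bigl(i_F^*\alpha/e_{T^k}(NF)\bigr)$ checked by restriction to the fixed set), and it is sound in its essentials.
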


A \textbf{unitary manifold} is a manifold with a complex structure on the bundle $TM \oplus \underline{\mathbb{R}}^k$ for some $k \geq 0$, where $\underline{\mathbb{R}}^k$ denotes the $k$-dimensional trivial bundle over $M$. By definition, any almost complex manifold is unitary.

For a torus action on a unitary manifold, at each fixed component weights of the normal bundle are well-defined. Moreover, weights at fixed points in an isotropy submanifold have an intimate relation.

\begin{lemma}\cite{J4}\label{l5}
Fix an element $w$ in $\mathbb{Z}^k$. Let a k-dimensional torus $T^k$ act on a compact unitary manifold $M$. Let $p$ and $p^\prime$ be fixed points which lie in the same component of $M^{\ker w}$. Then the $T^k$-weights at $p$ and at $p^\prime$ are equal modulo $w$.
\end{lemma}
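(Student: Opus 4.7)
\medskip

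\noindent\textbf{Proof proposal for Lemma \ref{l5}.}

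The plan is to work with the ambient connected fixed component of the subgroup $\ker w$, separate the weights at a $T^{k}$-fixed point into a ``tangent'' part and a ``normal'' part relative to this fixed component, and then handle the two parts differently.

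First, let $F$ denote the connected component of $M^{\ker w}$ that contains $p$ and $p'$. Since $\ker w$ is a compact (abelian) Lie subgroup of $T^{k}$ acting smoothly on $M$, its fixed point set is a disjoint union of closed smooth submanifolds, so $F$ is a compact smooth submanifold of $M$. Because $T^k$ is abelian, $\ker w$ is normal in $T^{k}$, hence the $T^{k}$-action on $M$ restricts to an action on $F$. At any $q\in F^{T^{k}}$, the tangent space splits as a $T^{k}$-representation
\[
T_{q}M \;=\; T_{q}F \;\oplus\; N_{q}F,
\]
where $T_{q}F$ is the subspace on which $\ker w$ acts trivially and $N_{q}F$ is its ($\ker w$-invariant) complement, on which $\ker w$ has no trivial weight.

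Next I would identify the characters that can occur on $T_{q}F$. A weight $u\in\mathbb{Z}^{k}$ appears in $T_{q}F$ only if $g^{u}=1$ for all $g\in\ker w$, i.e.\ $u$ lies in the annihilator of $\ker w$ in $\mathbb{Z}^{k}$. The homomorphism $T^{k}\to S^{1}$, $g\mapsto g^{w}$, has kernel $\ker w$ and image all of $S^{1}$ (since $w\neq 0$), so it induces $T^{k}/\ker w\cong S^{1}$, whose character lattice pulls back to $\mathbb{Z}\cdot w\subset\mathbb{Z}^{k}$. Hence every weight of the $T^{k}$-action on $T_{q}F$ is an integer multiple of $w$; in particular such weights are $\equiv 0\pmod{w}$. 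Applying this to $q=p$ and $q=p'$, the tangent-to-$F$ weights at both points are $\equiv 0\pmod{w}$ and therefore match modulo $w$ under any bijection (the number of such weights is $\tfrac12\dim F$ at each point).

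It remains to compare the normal weights at $p$ and $p'$. The restricted unitary structure on $M$ gives a $T^{k}$-equivariant unitary normal bundle $NF\to F$; I would argue that it decomposes as a direct sum of $T^{k}$-equivariant weight subbundles $NF=\bigoplus_{\alpha}(NF)^{\alpha}$ indexed by the characters $\alpha\in\mathbb{Z}^{k}$ that actually occur. This is standard via local equivariant triviality of $NF$ over $F^{T^{k}}$ together with the fact that the assignment $q\mapsto\dim(N_{q}F)^{\alpha}$ is locally constant on $F$; since $F$ is connected, each $(NF)^{\alpha}$ has constant rank, so the multiset of $T^{k}$-weights of the fibers of $NF$ is the same at $p$ and at $p'$. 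Combining with the previous paragraph, the full multisets of weights at $p$ and $p'$ agree modulo $w$, which is the claim.

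The main obstacle I anticipate is the last step: producing the equivariant weight decomposition of $NF$ (or, equivalently, showing that the fiberwise $T^{k}$-representation is locally constant along $F$). This is standard but requires invoking an equivariant tubular neighborhood or an equivariant local triviality statement for unitary $T^{k}$-bundles; every other step is a direct character-lattice computation. The other minor subtlety is that $\ker w$ need not be connected when $w$ is not primitive, but this affects neither the identification of $F$ as a smooth submanifold nor the annihilator computation, which is sensitive to all of $\ker w$ rather than to its identity component.
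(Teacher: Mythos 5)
Note first that the paper does not prove Lemma \ref{l5}; it is quoted from \cite{J4}, so your argument can only be judged on its own terms. Your overall strategy (pass to the component $F$ of $M^{\ker w}$, split the weights into a part fixed by $\ker w$ and a complementary part, and compare the two fixed points via constancy of ranks of isotypic subbundles over the connected manifold $F$) is the right one, and the computation that the annihilator of $\ker w$ in $\mathbb{Z}^k$ is exactly $\mathbb{Z}w$ (including the non-primitive case) is correct.

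However, the final step contains a genuine error. You decompose $NF$ into $T^k$-equivariant weight subbundles $(NF)^{\alpha}$ indexed by characters $\alpha\in\mathbb{Z}^k$ and conclude that the multiset of $T^k$-weights of $N_pF$ and $N_{p'}F$ coincide. This decomposition does not exist: $T^k$ acts nontrivially on the base $F$ (the quotient $T^k/\ker w\cong S^1$ acts effectively there), so the fiber $N_qF$ is a $T^k$-representation only when $q$ is a $T^k$-fixed point, and $q\mapsto\dim(N_qF)^{\alpha}$ is defined only on the discrete set $F^{T^k}$, where ``locally constant'' gives nothing. Moreover the conclusion itself is false: for $S^1$ acting on $\mathbb{CP}^2$ by $g\cdot[z_0:z_1:z_2]=[z_0:gz_1:g^2z_2]$ and $w=2$, the component $F=\{[z_0:0:z_2]\}$ of $M^{\ker w}$ contains the fixed points $[1:0:0]$ and $[0:0:1]$, whose normal weights are $1$ and $-1$ respectively --- equal modulo $2$ but not equal. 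The repair is to decompose by characters of $\ker w$ rather than of $T^k$: since $\ker w$ acts trivially on $F$, the restriction of the (stable) tangent bundle of $M$ to $F$ splits $T^k$-equivariantly into $\ker w$-isotypic subbundles indexed by the character group $\mathbb{Z}^k/\mathbb{Z}w$ of $\ker w$; each has locally constant, hence constant, rank over the connected manifold $F$, and at a $T^k$-fixed point the subbundle labeled $\bar{\alpha}$ collects precisely the weight spaces whose weights reduce to $\bar{\alpha}$ modulo $w$. Equality of these ranks at $p$ and $p'$ is exactly the assertion that the weights agree modulo $w$ --- no more --- and this formulation also handles the unitary (rather than almost complex) case cleanly, since one never needs $T_qF$ itself to be a complex subspace.
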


Since any almost complex manifold is unitary, Lemma \ref{l5} holds for almost complex manifolds. 

Circle actions on almost complex manifolds with few fixed points are classified.

\begin{theorem} \label{t211}
Let the circle act on a compact almost complex manifold $M$.
\begin{enumerate}
\item If there is exactly one fixed point $p$, then $M$ is the point itself, that is, $M=\{p\}$.
\item \cite{Kos2, J2} If there are exactly two fixed points, then either $M$ is the 2-sphere, or $\dim M=6$ and weights at the fixed points are $\{-a-b,a,b\}$ and $\{-a,-b,a+b\}$ for some positive integers $a$ and $b$.
\item \cite{J2} If there are exactly three fixed points, then $\dim M=4$ and weights at the fixed points are $\{a+b,a\}$, $\{-a,b\}$, and $\{-b,-a-b\}$ for some positive integers $a$ and $b$.
\end{enumerate}
\end{theorem}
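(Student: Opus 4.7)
My plan uses the Atiyah--Bott--Berline--Vergne (ABBV) localization theorem (Theorem~\ref{ab}) together with the rigidity of the Hirzebruch $\chi_y$-genus. The fundamental input is that for an $S^1$-action on a compact almost complex manifold with isolated fixed points, $\chi_y(M)=\sum_{p\in M^{S^1}}(-y)^{N_p^-}$, where $N_p^-$ denotes the number of negative weights at $p$; this polynomial has integer coefficients and degree at most $n=\tfrac{1}{2}\dim M$, and satisfies $\chi_{-1}(M)=\chi(M)=|M^{S^1}|$. A complementary tool is ABBV applied to $\alpha=1$: when $\dim M>0$ one has $\int_M 1=0$, so $\sum_p \tfrac{1}{\prod_i w_{p,i}}=0$ in $\mathbb{Q}$.

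Part (1) is immediate from the latter identity: a single fixed point would force $\tfrac{1}{\prod_i w_{p,i}}=0$, which is impossible, so $\dim M=0$ and $M=\{p\}$.

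For part (2), with two fixed points $p,q$ the identity above becomes $\prod_i w_{p,i}=-\prod_i w_{q,i}$, whence the absolute values agree and $N_p^-+N_q^-$ is odd, while the $\chi_y$-formula reads $\chi_y(M)=(-y)^{N_p^-}+(-y)^{N_q^-}$. If $n=1$, only $\{N_p^-,N_q^-\}=\{0,1\}$ is possible and the weight identity forces the standard $S^1$-action on $M=S^2$. For $n\ge 2$ I combine the two-term shape of $\chi_y$ with ABBV applied to the equivariant total Chern class (which forces integrality of $\int_M c_1^n$, $\int_M c_1^{n-2}c_2$, and so on) to rule out $n=2$ and every $n\ge 4$, leaving $n=3$ with $\{N_p^-,N_q^-\}=\{1,2\}$. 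Then Lemma~\ref{l5} applied along each of the three isotropy spheres joining $p$ to $q$, together with the weight-product identity, pins the weight multisets down to $\{-a-b,a,b\}$ and $\{-a,-b,a+b\}$.

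For part (3), with three fixed points $\chi_y(M)=\sum_{i=1}^{3}(-y)^{N_{p_i}^-}$ and $\chi_{-1}=3$; the constant term is the Todd genus, so the number of $i$ with $N_{p_i}^-=0$ equals $\mathrm{Todd}(M)$, and similarly for higher coefficients. The three-term ABBV identity $\sum_i \tfrac{1}{\prod_j w_{p_i,j}}=0$, combined with integrality of Chern-number integrals obtained from ABBV applied to equivariant Chern polynomials, lets me rule out every $n\ne 2$; once $n=2$, the only $(N_{p_i}^-)$ consistent with the ABBV identity and nonnegativity of the Todd genus is the permutation $(0,1,2)$, giving $\chi_y=1-y+y^2$. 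The six weights are then constrained by the three-term ABBV identity and by Lemma~\ref{l5} along the three isotropy spheres of the describing graph, and after relabeling they must take the form $\{a+b,a\}$, $\{-a,b\}$, $\{-b,-a-b\}$. The main technical obstacle throughout will be the dimensional pinning: the $\chi_y$-identity alone does not force $n\le 3$ in (2) or $n=2$ in (3), so the real work is extracting enough additional numerical constraints from ABBV applied to equivariant Chern polynomials (and, if necessary, from the integrality of the equivariant signature or $\widehat{A}$-genus) to exclude the remaining dimensions.
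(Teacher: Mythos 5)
The paper does not prove Theorem \ref{t211} at all: it is quoted from the literature (\cite{Kos2, J2}), and the authors explicitly remark that the three-fixed-point case in \cite{J2} is a ``careful modification'' of the full-length paper \cite{J1} on symplectic flows with three equilibria. So there is no internal proof to match your argument against; the question is whether your sketch would actually establish the result, and it would not.

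Your treatment of part (1), and of the $n=1$ case of part (2), is fine, and the observation that the ABBV identity $\prod_i w_{p,i}=-\prod_i w_{q,i}$ together with the symmetry $a_i(M)=a_{n-i}(M)$ forces $n$ odd is correct (note this already disposes of all even $n$, so ``ruling out $n=2$'' is automatic). But the entire content of the theorem lies in the dimension pinning --- excluding odd $n\ge 5$ in part (2) and excluding $n\ne 2$ in part (3) --- and for these steps you offer only the assertion that ``integrality of $\int_M c_1^n$, $\int_M c_1^{n-2}c_2$, and so on'' obtained from ABBV will do the job. No such argument is given, and it is not known to work in that form; the actual proofs in \cite{Kos2, J2, J1} rest on finer tools, chiefly the multiplicity identity $\sum_p N_p(w)=\sum_p N_p(-w)$ for each integer $w$ (which for two fixed points makes the union of the two weight multisets symmetric under negation) combined with a delicate case analysis. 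You acknowledge this yourself in your closing sentence, which concedes that ``the real work'' remains. A secondary issue: your appeal to Lemma \ref{l5} ``along each of the three isotropy spheres joining $p$ to $q$'' presupposes an isotropy-sphere structure that the paper establishes only for torus manifolds (half-dimensional torus actions), not for a circle acting on a $6$-manifold; for a circle action $\ker w$ is a finite cyclic group and the components of $M^{\ker w}$ need not be spheres joining the two fixed points. As it stands the proposal is a plausible framework with the hardest steps left as declared gaps, not a proof.
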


The proof for the case of 3 fixed points in \cite{J2} is a careful modification of the proof for a symplectic action in \cite{J1}.

\section{Existence: examples} \label{s3}

By Theorem \ref{t11}, any 6-dimensional almost complex torus manifold with Euler number 6 is described by Figure \ref{f4a} or Figure \ref{f4b}. In this section, we construct examples of almost complex torus manifolds, one described by Figure \ref{f4a} and the other described by Figure \ref{f4b}. 

We note that Example \ref{ex1} with $k=0$ is the same as Example \ref{ex2} with $k=l=0$. We also note that the manifold constructed in \cite[Example 7.1]{J4} is a special case of Example \ref{ex2} that we construct in this paper with $k=0$. Moreover, Example \ref{ex1} with $k=1$ is also obtained as blowing up at a fixed point of a linear $T^3$-action on $\mathbb{CP}^3$; see Example \ref{ex3} for details.

The first example is an almost complex torus manifold described by Figure \ref{f4a}.

\begin{exa} \label{ex1}
Fix an integer $k$. By $M_1 (k)$ we mean a compact complex manifold
\begin{center}$M_1(k)=\{([z_0:z_1:z_2:z_3],[w_0:w_1:w_2]) \in \mathbb{CP}^3 \times \mathbb{CP}^2 \, | \, z_0 w_1^k=z_1 w_0^k, z_1 w_2^k=z_2 w_1^k\}$.\end{center}
Note that $z_0 w_1^k=z_1 w_0^k$ and $z_1 w_2^k=z_2 w_1^k$ imply that $z_0 w_2^k=z_2 w_0^k$.

Let $a$, $b$, and $c$ form a basis of $\mathbb{Z}^3$. Let $T^3$ act on $M_1(k)$ by
\begin{center}
$g \cdot ([z_0:z_1:z_2:z_3],[w_0:w_1:w_2])$

$=([z_0:g^{ka} z_1:g^{k(a+b)} z_2:g^{-c} z_3],[w_0: g^a w_1:g^{a+b} w_2])$
\end{center}
for all $g\in T^3 \subset \mathbb{C}^3$.
The action has 6 fixed points $p_1=([1:0:0:0],[1:0:0])$, $p_2=([0:1:0:0],[0:1:0])$, $p_3=([0:0:1:0],[0:0:1])$, $p_4= ([0:0:0:1],[1:0:0])$, $p_5=([0:0:0:1],[0:1:0])$, and $p_6=([0:0:0:1],[0:0:1])$ that have weights $\{-c, a, a+b\}$, $\{-c-ka, -a, b\}$, $\{-c-k(a+b), -a-b, -b\}$, $\{c, a, a+b\}$, $\{c+ka, -a, b\}$, and $\{c+k(a+b), -a-b, -b\}$, respectively.

For instance, near $p_1$, we have $z_0 \neq 0$, $w_0 \neq 0$, $\frac{z_1}{z_0}=(\frac{w_1}{w_0})^k$, and $\frac{z_2}{z_0}=(\frac{w_2}{w_0})^k$. Thus $(\frac{z_3}{z_0}, \frac{w_1}{w_0}, \frac{w_2}{w_0})$ are local coordinates at $p_1$. Then $T^3$ acts on a neighborhood of $p_1$ by
\begin{center}
$\displaystyle g \cdot \Big( \frac{z_3}{z_0}, \frac{w_1}{w_0}, \frac{w_2}{w_0} \Big) = \Big( \frac{g^{-c} z_3}{z_0}, \frac{g^aw_1}{w_0}, \frac{g^{a+b} w_2}{w_0} \Big) = \Big( g^{-c}\frac{z_3}{z_0}, g^a\frac{w_1}{w_0}, g^{a+b}\frac{w_2}{w_0} \Big)$.
\end{center}
Therefore, the weights at $p_1$ are $\{-c, a, a+b\}$.

We have that $p_1$ has weight $a$, $p_2$ has weight $-a$, and $p_1$ and $p_2$ are in the same 2-sphere $F_{1,2}:= \{([z_0 : z_1 : 0 : 0], [w_0 : w_1 : 0]) | z_0 w_1^k = z_1 w_0^k\}$ of $M^{\ker a}$. Thus, in a graph describing $M_1(k)$ with this action, there is an edge from $p_1$ to $p_2$ with label $a$.

By similar computations, one can see that Figure \ref{f4a} describes $M_1(k)$ with this action.
\end{exa}

The second example is an almost complex torus manifold described by Figure \ref{f4b}.

\begin{exa} \label{ex2}
Fix integers $k$ and $l$. By $M_2 (k,l)$ we mean a compact complex manifold
\begin{center}$M_2(k,l)=\{([z_0:z_1:z_2:z_3:z_4],[w_0:w_1]) \in \mathbb{CP}^4 \times \mathbb{CP}^1 \, | \, z_0 w_1^k=z_3 w_0^k, z_2 w_1^{k+l}=z_4 w_0^{k+l}\}$.\end{center} 
Let $a$, $b$, and $c$ form a basis of $\mathbb{Z}^3$.
Let $T^3$ act on $M_2 (k,l)$ by
\begin{center}$ g \cdot ([z_0:z_1:z_2:z_3:z_4],[w_0:w_1])$ 
$= ([z_0: g^{-a} z_1: g^b z_2: g^{kc} z_3: g^{b+(k+l)c} z_4],[w_0:g^c w_1])$\end{center}
for all $g\in T^3 \subset \mathbb{C}^3$. The action has 6 fixed points $p_1=([1:0:0:0:0],[1:0])$, $p_2=([0:1:0:0:0],[1:0])$, $p_3=([0:1:0:0:0],[0:1])$, $p_4= ([0:0:1:0:0],[1:0])$, $p_5=([0:0:0:1:0],[0:1])$, and $p_6=([0:0:0:0:1],[0:1])$ that have weights $\{-a, b, c\}$, $\{a, a+b, c\}$, $\{a+kc, a+b+(k+l)c, -c\}$, $\{-b, -a-b, c\}$, $\{-a-kc, b+lc, -c\}$, and $\{-a-b-(k+l)c, -b-lc, -c\}$, respectively.

For instance, near $p_3$, we have $z_1 \neq 0$, $w_1 \neq 0$, $\frac{z_0}{z_1}=\frac{z_3}{z_1}(\frac{w_0}{w_1})^k$, and $\frac{z_2}{z_1}=\frac{z_4}{z_1}(\frac{w_0}{w_1})^{k+l}$. Thus, $(\frac{z_3}{z_1},\frac{z_4}{z_1},\frac{w_0}{w_1})$ are local coordinates at $p_3$. Then $T^3$ acts on a neighborhood of $p_3$ by
\begin{center}$\displaystyle g\cdot \Big(\frac{z_3}{z_1},\frac{z_4}{z_1},\frac{w_0}{w_1}\Big)=\Big(\frac{g^{kc}z_3}{g^{-a}z_1},\frac{g^{b+(k+l)c}z_4}{g^{-a}z_1},\frac{w_0}{g^c w_1}\Big) = \Big(g^{a+kc}\frac{z_3}{z_1},g^{a+b+(k+l)c}\frac{z_4}{z_1},g^{-c}\frac{w_0}{w_1}\Big)$.\end{center} Therefore, the weights at $p_3$ are $\{a+kc, a+b+(k+l)c, -c\}$.

We have that $p_3$ has weight $a+b+(k+l)c$, $p_6$ has weight $-a-b-(k+l)c$, and $p_3$ and $p_6$ are in the same 2-sphere $F_{3,6}^\prime:= \{([0 : z_1 : 0 : 0 : z_4], [0 : 1])\}$ of $M^{\ker a+b+(k+l)c}$. Thus, in a graph describing $M_2(k,l)$ with this action, there is an edge from $p_3$ to $p_6$ with label $a+b+(k+l)c$.

By similar computations, one can see that Figure \ref{f4b} describes $M_2(k,l)$ with this action.
\end{exa}

\section{Blow up} \label{s4}

In this section, we discuss blow up at a fixed point of an almost complex torus manifold and a graph describing it. We shall focus on dimension 6.

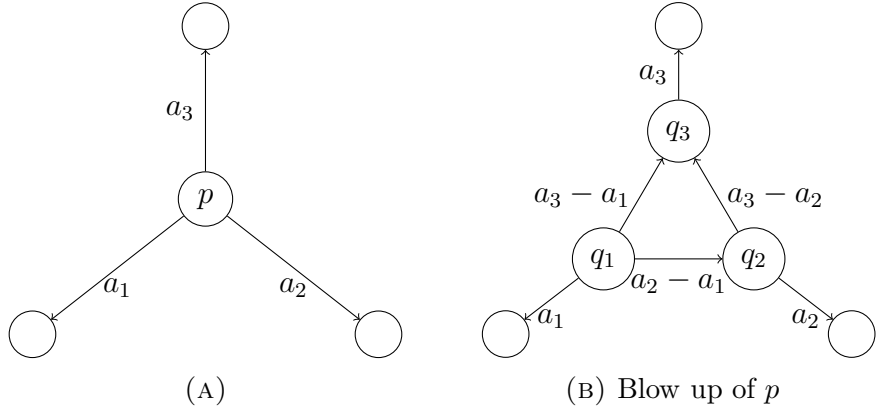
\begin{figure}
\begin{center}
\begin{subfigure}[b][5cm][s]{.45\textwidth}
\centering
\begin{tikzpicture}[state/.style ={circle, draw}]
\node[vertex] (a) at (0, 0) {$p$};
\node[vertex] (b) at (-2.3, -1.8) {};
\node[vertex] (c) at (2.3,-1.8) {};
\node[vertex] (d) at (0, 2.3) {};
\path (a) [->] edge node [below] {$a_1$} (b);
\path (a) [->] edge node [below] {$a_2$} (c);
\path (a) [->] edge node [left] {$a_3$} (d);
\end{tikzpicture}
\caption{}\label{f10a}
\vspace{\baselineskip}
\end{subfigure}\qquad
\begin{subfigure}[b][5cm][s]{.4\textwidth}
\centering
\begin{tikzpicture}[state/.style ={circle, draw}]
\node[vertex] (e) at (-1, -1) {$q_1$};
\node[vertex] (f) at (1, -1) {$q_2$};
\node[vertex] (g) at (0, .7) {$q_3$};
\node[vertex] (b) at (-2.3, -2) {};
\node[vertex] (c) at (2.3,-2) {};
\node[vertex] (d) at (0, 2.1) {};
\path (e) [->] edge node [below] {$a_1$} (b);
\path (f) [->] edge node [below] {$a_2$} (c);
\path (g) [->] edge node [left] {$a_3$} (d);
\path (e) [->] edge node [below] {$a_2-a_1$} (f);
\path (f) [->] edge node [right] {$a_3-a_2$} (g);
\path (e) [->] edge node [left] {$a_3-a_1$} (g);
\end{tikzpicture}
\caption{Blow up of $p$}\label{f10b}
\vspace{\baselineskip}
\end{subfigure}\qquad
\vspace{\baselineskip}
\caption{Blow up of $M$ and its graph}\label{f10}
\end{center}
\end{figure}

Let $M$ be a 6-dimensional almost complex torus manifold. Let $p$ be a fixed point and let $a_1$, $a_2$, and $a_3$ be the weights at $p$. Then Figure \ref{f10a} describes $M$ near $p$. Now we shall blow up $p$. We can identify a neighborhood of $p$ with a neighborhood of the origin in $\mathbb{C}^3$, on which the torus $T^3$ acts by
\begin{center}
$g \cdot (z_1,z_2,z_3)=(g^{a_1} z_1, g^{a_2} z_2, g^{a_3} z_3)$    
\end{center}
for all $g \in T^3$ and $(z_1,z_2,z_3) \in \mathbb{C}^3$. In this description, blowing up $p$ corresponds to blowing up the origin in $\mathbb{C}^3$, which is to replace the origin in $\mathbb{C}^3$ with all complex straight lines through it. Near $p$, the blown up space is described by
\begin{center}
$\{((z_1,z_2,z_3),[w_1,w_2,w_3]) \in \mathbb{C}^3 \times \mathbb{CP}^2 | z_i w_j=z_j w_i, \forall i,j\}$.
\end{center}
The $T^3$-action near $p$ naturally extends to act on the blown up space by
\begin{center}
$g \cdot ((z_1,z_2,z_3),[w_1,w_2,w_3])=((g^{a_1} z_1, g^{a_2} z_2,g^{a_3} z_3),[g^{a_1} w_1, g^{a_2} w_2, g^{a_3} w_3])$,
\end{center}
which has 3 fixed points $q_1=((0,0,0),[1:0:0])$, $q_2=((0,0,0),[0:1:0])$, and $q_3=((0,0,0),[0:0:1])$ that have weights $\{a_1,a_2-a_1,a_3-a_1\}$, $\{a_2,a_1-a_2,a_3-a_2\}$, and $\{a_3,a_1-a_3,a_2-a_3\}$, respectively. Let $\widetilde{M}$ denote the blown up manifold. Then Figure \ref{f10b} describes $\widetilde{M}$ near the blow up of $p$.

\begin{exa}\label{ex3}
Let $a_1$, $a_2$, and $a_3$ form a basis of $\mathbb{Z}^3$. Let $T^3$ act on $\mathbb{CP}^3$ by
\begin{center} $g \cdot [z_0:z_1:z_2:z_3] = [z_0:g^{a_1}z_1:g^{a_2}z_2:g^{a_3}z_3]$\end{center}
for all $g\in T^3 \subset \mathbb{C}^3$ and for all $[z_0:z_1:z_2:z_3] \in \mathbb{CP}^3$.  
Then both Figure \ref{f8a} and Figure \ref{f8b} describe this linear standard $T^3$-action on $\mathbb{CP}^3$ as they are the same graph up to position of vertices. The fixed point $p_0=[1:0:0:0]$ has weights $\{a_1,a_2,a_3\}$. Let $\widetilde{\mathbb{CP}^3}$ denote blowing up of $\mathbb{CP}^3$ at $p_0$. As discussed above, in the blown up manifold $\widetilde{\mathbb{CP}^3}$ the fixed point $p_0$ is replaced with three fixed points $p_4$, $p_5$, and $p_6$ that have weights $\{a_1,a_2-a_1,a_3-a_1\}$, $\{a_2,a_1-a_2,a_3-a_2\}$, and $\{a_3,a_1-a_3,a_2-a_3\}$, respectively. Thus, Figure \ref{f9} describes $\widetilde{\mathbb{CP}^3}$.

If we replace $a_2-a_1$, $a_3-a_2$, and $a_1$ with $a$, $b$, and $c$, respectively, then the $T^3$-action on $\widetilde{\mathbb{CP}^3}$ is the same as the action on $M_1(k)$ with $k=1$ in Example \ref{ex1}, and Figure \ref{f9} is the same as Figure \ref{f4a} with $k=1$.
\end{exa}

\begin{figure}
\begin{center}
\begin{subfigure}[b][5cm][s]{.4\textwidth}
\centering
\begin{tikzpicture}[state/.style ={circle, draw}]
\node[vertex] (a) at (0, 0) {$p_{0}$};
\node[vertex] (b) at (2, 1.5) {$p_{1}$};
\node[vertex] (c) at (2,3.5) {$p_{2}$};
\node[vertex] (d) at (0, 5) {$p_{3}$};
\path (a) [->] edge node [below] {$a_1$} (b);
\path (a) [->] edge node [left] {$a_2$} (c);
\path (a) [->] edge node [left] {$a_3$} (d);
\path (b) [->] edge node [right] {$a_2-a_1$} (c);
\path (b) [->] edge node [left] {$a_3-a_1$} (d);
\path (c) [->] edge node [pos=.2, above=.4] {$a_3-a_2$} (d);
\end{tikzpicture}
\caption{}\label{f8a}
\vspace{\baselineskip}
\end{subfigure}\qquad
\vspace{\baselineskip}
\begin{subfigure}[b][5cm][s]{.4\textwidth}
\centering
\begin{tikzpicture}[state/.style ={circle, draw}]
\node[vertex] (a) at (0, 0) {$p_{0}$};
\node[vertex] (b) at (-2.5, -1.8) {$p_{1}$};
\node[vertex] (c) at (2.5,-1.8) {$p_{2}$};
\node[vertex] (d) at (0, 2.5) {$p_{3}$};
\path (a) [->] edge node [below] {$a_1$} (b);
\path (a) [->] edge node [below] {$a_2$} (c);
\path (a) [->] edge node [left] {$a_3$} (d);
\path (b) [->] edge node [below] {$a_2-a_1$} (c);
\path (b) [->] edge node [left] {$a_3-a_1$} (d);
\path (c) [->] edge node [right] {$a_3-a_2$} (d);
\end{tikzpicture}
\caption{}\label{f8b}
\vspace{\baselineskip}
\end{subfigure}\qquad
\vspace{\baselineskip}
\caption{Graph describing a standard $T^3$-action on $\mathbb{CP}^3$}\label{f8}
\end{center}
\end{figure}

\begin{figure}
\begin{center}
\begin{subfigure}[b][6cm][s]{.55\textwidth}
\centering
\begin{tikzpicture}[state/.style ={circle, draw}]
\node[vertex] (e) at (-1, -1) {$p_{4}$};
\node[vertex] (f) at (1, -1) {$p_{5}$};
\node[vertex] (g) at (0, .7) {$p_{6}$};
\node[vertex] (b) at (-3.5, -2.5) {$p_{1}$};
\node[vertex] (c) at (3.5,-2.5) {$p_{2}$};
\node[vertex] (d) at (0, 3) {$p_{3}$};
\path (e) [->] edge node [below] {$a_1$} (b);
\path (f) [->] edge node [below] {$a_2$} (c);
\path (g) [->] edge node [left] {$a_3$} (d);
\path (b) [->] edge node [below] {$a_2-a_1$} (c);
\path (b) [->] edge node [left] {$a_3-a_1$} (d);
\path (c) [->] edge node [right] {$a_3-a_2$} (d);
\path (e) [->] edge node [below] {$a_2-a_1$} (f);
\path (f) [->] edge node [right] {$a_3-a_2$} (g);
\path (e) [->] edge node [left] {$a_3-a_1$} (g);
\end{tikzpicture}
\vspace{\baselineskip}
\end{subfigure}\qquad
\vspace{\baselineskip}
\caption{}\label{f9}
\end{center}
\end{figure}

\section{Hirzebruch genera} \label{s5}

For a compact unitary manifold, the \textbf{Hirzebruch $\chi_y$-genus}, the \textbf{Euler class}, the \textbf{Todd genus}, and the \textbf{$L$-genus} are the genera of power series $\frac{x(1+ye^{-x(1+y)})}{1-e^{-x(1+y)}}$, $1+x$, $\frac{x}{1-e^{-x}}$, and $\frac{\sqrt{x}}{\tanh \sqrt{x}}$, respectively; the latter three power series are obtained from the first power series by evaluating at $y=-1$, $y=0$, and $y=1$. Therefore, the Hirzebruch $\chi_y$-genus contains information on the Euler number, the Todd genus, and the $L$-genus.

Let the circle group $S^1$ act on a unitary manifold. For a fixed component $F$, we denote by $d(-,F)$ ($d(+,F)$) the number of negative (positive) weights in the normal bundle of $F$.

\begin{theo}[Kosniowski formula]\cite{HT, KY, Kos1}\label{t4}
Let the circle act on a compact unitary manifold $M$. Then the Hirzebruch $\chi_y$-genus $\chi_y (M)$ of $M$ satisfies
\begin{center}
$\displaystyle \chi_y (M)= \sum_{F\subset M^{S^1}} (-y)^{d(-,F)}\cdot \chi_y(F) = \sum_{F\subset M^{S^1}} (-y)^{d(+,F)}\cdot \chi_y(F)$,
\end{center}
where the sum is taken over all fixed components.
\end{theo}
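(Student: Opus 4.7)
The approach is to apply the Atiyah-Bott-Berline-Vergne localization theorem (Theorem~\ref{ab}) to an equivariant lift of the Hirzebruch $\chi_y$-class on $M$, and then extract the Kosniowski formula by analyzing the asymptotics of the resulting rational function in the equivariant parameter $t$. Recall that the Hirzebruch class of a complex vector bundle $E$ is the multiplicative class $\mathcal{Q}(E)=\prod_i Q(x_i)$ associated to $Q(x)=x(1+ye^{-x(1+y)})/(1-e^{-x(1+y)})$, so $\chi_y(M)=\int_M \mathcal{Q}(TM)$. Let $\mathcal{Q}^{S^1}(TM)$ denote its $S^1$-equivariant lift, obtained by replacing ordinary Chern roots with equivariant Chern roots. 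Applying Theorem~\ref{ab} gives, as elements of $\mathbb{Q}(t)[y]$,
\begin{equation*}
\pi_*\mathcal{Q}^{S^1}(TM)\;=\;\sum_{F\subset M^{S^1}}\int_F \frac{\mathcal{Q}^{S^1}(TM)\big|_F}{e^{S^1}(NF)},
\end{equation*}
where $t\in H^2(BS^1;\mathbb{Q})$ is the generator.

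At a fixed component $F$, the normal bundle splits equivariantly as $NF=\bigoplus_{j=1}^{m_F}L_j$ with $c_1^{S^1}(L_j)=y_j+w_j t$, where $y_j\in H^2(F)$ is the ordinary first Chern class and $w_j$ is the weight of $L_j$. Writing $h(x):=Q(x)/x=(1+ye^{-x(1+y)})/(1-e^{-x(1+y)})$, the contribution of $F$ simplifies to $\int_F\prod_i Q(x_i^F)\cdot\prod_{j=1}^{m_F}h(y_j+w_j t)$, where $x_i^F$ are the ordinary Chern roots of $TF$. A direct computation shows $\lim_{x\to+\infty}h(x)=1$ and $\lim_{x\to-\infty}h(x)=-y$, with all derivatives $h^{(k)}$ decaying exponentially at both ends. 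Since each $y_j$ is nilpotent in $H^*(F)$, the Taylor expansion $h(y_j+w_j t)=\sum_{k\geq 0}h^{(k)}(w_j t)\,y_j^k/k!$ is a finite sum, so as $t\to+\infty$ each factor tends to $1$ when $w_j>0$ and to $-y$ when $w_j<0$. Hence the normal bundle factor tends to the constant $(-y)^{d(-,F)}$, and the whole $F$-contribution tends to $(-y)^{d(-,F)}\chi_y(F)$, using $\int_F\prod_i Q(x_i^F)=\chi_y(F)$. A symmetric argument with $t\to-\infty$ yields $(-y)^{d(+,F)}\chi_y(F)$.

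To finish, I would argue that $\pi_*\mathcal{Q}^{S^1}(TM)$ is in fact constant in $t$ and equals $\chi_y(M)$. The localized right-hand side, viewed as a rational function in $t$, must be regular at $t=0$ because the left-hand side lies in $\mathbb{Q}[\![t]\!][y]$; combined with the finite limits at $t\to\pm\infty$ just established, this forces the rational function to be a polynomial in $t$ that is bounded at infinity, hence constant. Its value at $t=0$ equals the non-equivariant $\int_M\mathcal{Q}(TM)=\chi_y(M)$, since restriction to a fiber modulo $t$ intertwines the equivariant and non-equivariant pushforwards, and must also equal both limiting values, producing the two identities of Theorem~\ref{t4}. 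The main technical obstacle is to justify the limit passage rigorously: one has to combine nilpotency of the $y_j\in H^{>0}(F)$ (to truncate the Taylor expansions of $h(y_j+w_j t)$ to finitely many terms), the uniform exponential decay of the $h^{(k)}$ on the two ends of $\mathbb{R}$, and the a priori regularity of $\pi_*\mathcal{Q}^{S^1}(TM)$ at $t=0$, after which the elementary fact that a rational function regular everywhere including at infinity is constant closes the argument.
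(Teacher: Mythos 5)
The paper does not prove Theorem \ref{t4} at all: it is quoted from \cite{HT, KY, Kos1}, where it is established by the holomorphic Lefschetz fixed point formula and by unitary bordism of $S^1$-actions, so there is no internal proof to compare against. Your localization strategy is the classical Lusztig--Kosniowski--Atiyah--Hirzebruch rigidity argument, and most of it is sound: the identification of the $F$-contribution as $\int_F\mathcal{Q}(TF)\prod_j h(y_j+w_jt)$, the limits $h(x)\to 1$ and $h(x)\to -y$ as $x\to\pm\infty$ (valid for real $y>-1$, which suffices since both sides are polynomials in $y$), the use of nilpotency of the $y_j$, and the resulting limits $(-y)^{d(\mp,F)}\chi_y(F)$ are all correct.

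The genuine gap is in the final step. The localized sum is \emph{not} a rational function of $t$: since $h(x)=(1+ye^{-x(1+y)})/(1-e^{-x(1+y)})$, each $h^{(k)}(w_jt)$ is a rational function of $q:=e^{-(1+y)t}$, so the total contribution is $R(q)$ for some rational function $R$, which is transcendental in $t$. Your argument "regular at $t=0$, bounded at $t\to\pm\infty$, hence a constant rational function" therefore does not apply. In the variable $q$, regularity of the power series $\pi_*\mathcal{Q}^{S^1}(TM)$ at $t=0$ only controls the behaviour of $R$ at $q=1$, while the individual local terms have poles at every root of unity $\zeta$ with $\zeta^{w_j}=1$ for some weight $w_j$ (and weights of absolute value $\ge 2$ genuinely occur, e.g.\ in the actions appearing in this paper). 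A function with finite limits at $q=0,\infty$ and no pole at $q=1$ need not be constant (consider $(1-q)^2/(1+q^2)$), so without ruling out poles at the remaining torsion points you cannot conclude $\Phi(0)=\lim_{t\to+\infty}\Phi(t)$, which is exactly the identity you need. The standard ways to close this hole are substantial additional inputs: either invoke the equivariant index theorem so that the global quantity is a priori a finite Laurent polynomial in $q$ (a virtual character in $R(S^1)[y]$, which together with finite limits at $0$ and $\infty$ forces constancy), or run a Bott--Taubes style transfer argument showing that the residues at each nontrivial root of unity cancel by applying localization to the $S^1$-action on the fixed submanifolds $M^{\mathbb{Z}_m}$. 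As written, the proposal asserts constancy from properties that do not imply it, so the argument is incomplete at its decisive step.
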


Again, since any almost complex manifold is unitary, Theorem \ref{t4} holds for any compact almost complex $S^1$-manifold.
The below lemma is a simple application of the Kownioswki formula.

\begin{lem}\cite{J4}\label{l1}
Let a $k$-dimensional torus $T^k$ act on a $2n$-dimensional compact almost complex manifold $M$ with isolated fixed points. Let $\chi_y (M) = \sum_{i=0}^n a_i(M) \cdot (-y)^i$ be the Hirzebruch $\chi_y$-genus of $M$. Then the following hold:
\begin{enumerate}
\item $a_i(M) \geq 0$ for $0 \leq i \leq n$.
\item The total number of fixed points is $\sum_{i=0}^n a_i(M)$.
\item For $0\leq i \leq n$, $a_i(M) = a_{n-i}(M)$.
\end{enumerate}
\end{lem}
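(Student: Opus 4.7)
The plan is to reduce to a circle action and then apply Theorem \ref{t4} directly. Pick an element $\xi \in \mathbb{Z}^k = \mathrm{Hom}(S^1, T^k)$ such that $\langle \xi, w \rangle \neq 0$ for every weight $w$ at every fixed point of the $T^k$-action; since $M^{T^k}$ is finite and each fixed point has only $n$ weights, the set of $\xi$ to avoid is a finite union of hyperplanes in $\mathbb{R}^k$, so such a $\xi$ exists. Let $S^1 \subset T^k$ be the circle corresponding to $\xi$. Then no weight of the $T^k$-action pairs trivially with $S^1$, which forces $M^{S^1} = M^{T^k}$, and at each $p \in M^{T^k}$ the $S^1$-weights are the integers $\langle \xi, w_{p,1}\rangle, \ldots, \langle \xi, w_{p,n}\rangle$, all nonzero.

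Now apply Theorem \ref{t4} to this $S^1$-action. Since each fixed component is a point and $\chi_y(\mathrm{pt}) = 1$, the Kosniowski formula reads
\begin{equation*}
\chi_y(M) \;=\; \sum_{p \in M^{T^k}} (-y)^{d(-,p)} \;=\; \sum_{p \in M^{T^k}} (-y)^{d(+,p)},
\end{equation*}
where $d(\pm, p)$ counts the positive/negative $S^1$-weights at $p$. In particular, if one sets $N_i := \#\{p \in M^{T^k} : d(-,p) = i\}$, then $\chi_y(M) = \sum_{i=0}^n N_i \cdot (-y)^i$, so by definition $a_i(M) = N_i \geq 0$, which proves (1). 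Summing $\sum_i N_i$ just counts all fixed points, giving (2).

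For (3), use the second equality in the Kosniowski formula together with the elementary fact that $d(+,p) + d(-,p) = n$ (every weight is either positive or negative, none being zero). Then
\begin{equation*}
\sum_{i=0}^n a_i(M) \cdot (-y)^i \;=\; \sum_{p} (-y)^{d(+,p)} \;=\; \sum_{p} (-y)^{n - d(-,p)} \;=\; \sum_{i=0}^n a_i(M) \cdot (-y)^{n-i},
\end{equation*}
and comparing the coefficient of $(-y)^i$ on both sides yields $a_i(M) = a_{n-i}(M)$.

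The only step that requires care is the selection of $\xi$ so that $M^{S^1} = M^{T^k}$; this is a standard genericity argument and is not really an obstacle since the fixed point set is finite. Once the circle subgroup is chosen, the result is a direct bookkeeping consequence of Theorem \ref{t4} applied to isolated fixed points.
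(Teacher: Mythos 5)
The paper gives no proof of this lemma: it is imported from \cite{J4}, with only the remark that it is ``a simple application of the Kosniowski formula.'' Your proof is exactly that intended argument --- pass to a generic circle subgroup so that the fixed points stay isolated, apply Theorem \ref{t4} with $\chi_y(\mathrm{pt})=1$ to read off $a_i(M)$ as the number of fixed points with $d(-,p)=i$, and use $d(+,p)+d(-,p)=n$ together with the two forms of the formula to get the symmetry $a_i(M)=a_{n-i}(M)$. Parts (1)--(3) then follow correctly as you describe.

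The one step stated too quickly is the genericity claim. Choosing $\xi$ off the finitely many hyperplanes $\langle\xi,w_{p,i}\rangle=0$ guarantees that every point of $M^{T^k}$ is an \emph{isolated} point of $M^{S^1}$ (the local model at $p$ shows the $S^1$-fixed subspace of $T_pM$ is zero), but it does not by itself exclude additional positive-dimensional components of $M^{S^1}$ disjoint from $M^{T^k}$; such components would contribute extra terms $(-y)^{d(-,F)}\chi_y(F)$ to the Kosniowski formula and would spoil both the nonnegativity in (1) and the fixed-point count in (2). The standard fix is to choose $\xi$ outside the finitely many proper subtori arising as identity components of isotropy subgroups (finitely many by compactness and the slice theorem); this forces $M^{S^1}=M^{T^k}$ on the nose, after which your bookkeeping goes through verbatim.
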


In \cite{J4}, the first author used the Kosniowski formula to show that for an almost complex torus manifold, the coefficients of its Hirzebruch genus are non-zero.

\begin{theo}\label{t1}\cite{J4}
Let $M$ be a $2n$-dimensional almost complex torus manifold. Then $a_i(M)>0$ for $ 0\leq i \leq n$, where $\chi_y (M) = \sum_{i=0}^n a_i(M) \cdot (-y)^i$ is the Hirzebruch $\chi_y$-genus of $M$. In particular, the Todd genus of $M$ is positive.
\end{theo}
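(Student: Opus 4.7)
The plan is to apply the Kosniowski formula (Theorem~\ref{t4}) to a carefully chosen circle subgroup of $T^n$ and reinterpret each coefficient $a_i(M)$ as counting fixed points with exactly $i$ negative weights. I would then exploit the fact that at each isolated fixed point the weights form a basis of $\mathbb{R}^n$ in order to realize every possible sign pattern, forcing each $a_i$ to be positive.

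First, pick a primitive $\xi \in \mathbb{Z}^n$ that is \emph{generic} in the sense that $\langle \xi, w_{p,j}\rangle \neq 0$ for every weight $w_{p,j}$ at every fixed point $p$. The resulting circle subgroup $S^1_\xi \subseteq T^n$ has the same (isolated) fixed point set as $T^n$, and at each fixed point $p$ its $S^1_\xi$-weights are the nonzero integers $\langle \xi, w_{p,j}\rangle$. Since $\chi_y(\mathrm{pt}) = 1$, Theorem~\ref{t4} yields
\begin{equation*}
\chi_y(M) \;=\; \sum_{p \in M^{T^n}} (-y)^{d_\xi(-,p)}, \qquad d_\xi(-,p) := |\{j : \langle \xi, w_{p,j}\rangle < 0\}|.
\end{equation*}
Comparing with $\chi_y(M) = \sum_i a_i(M)(-y)^i$ gives $a_i(M) = |\{p : d_\xi(-,p) = i\}|$; since $\chi_y(M)$ is a topological invariant, this count is independent of the generic $\xi$.

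Second, at any isolated fixed point $p_0$ the weights $w_{p_0,1}, \ldots, w_{p_0,n}$ are $\mathbb{Q}$-linearly independent: their common kernel in $T^n$ is the subgroup fixing a neighborhood of $p_0$, and isolation of $p_0$ together with $\dim T^n = \tfrac{1}{2}\dim M$ forces this kernel to be finite. Hence the weights form an $\mathbb{R}$-basis of $\mathbb{R}^n$, and for every subset $S \subseteq \{1, \ldots, n\}$ the open cone
\begin{equation*}
C_{p_0,S} \;=\; \{\xi \in \mathbb{R}^n : \langle \xi, w_{p_0,j}\rangle < 0 \iff j \in S\}
\end{equation*}
is non-empty, hence contains primitive integer vectors.

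Third, to conclude $a_i(M) > 0$, fix any $i \in \{0,1,\ldots,n\}$, any fixed point $p_0$, and any $S \subseteq \{1, \ldots, n\}$ with $|S| = i$. Choose a primitive $\xi \in C_{p_0,S} \cap \mathbb{Z}^n$ that further avoids the finitely many hyperplanes $\{\xi : \langle \xi, w_{p,j}\rangle = 0\}$ coming from the weights at the other fixed points. Then $d_\xi(-, p_0) = i$, so the first step gives $a_i(M) \geq 1$; specializing to $i = 0$ yields positivity of the Todd genus. The main obstacle is the spanning property used in the second step: without the torus manifold hypothesis $\dim T^n = \tfrac{1}{2}\dim M$, the weights at a fixed point could lie in a proper subspace of $\mathbb{R}^n$, and the sign-pattern realization of the third step would fail.
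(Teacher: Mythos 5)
Your argument is correct and follows essentially the route the paper attributes to \cite{J4} for this theorem (which is stated here only as a citation, without proof): apply the Kosniowski formula to generic circle subgroups, identify $a_i(M)$ with the number of fixed points having $i$ negative weights, and use the fact that the weights at a fixed point of a torus manifold span $\mathbb{R}^n$ to realize every sign pattern. The one point to tighten is the claim that the condition $\langle \xi, w_{p,j}\rangle \neq 0$ alone forces $M^{S^1_\xi}=M^{T^n}$: strictly, you should either also choose $\xi$ outside the Lie algebras of the finitely many positive-dimensional isotropy subgroups, or note that any component of $M^{S^1_\xi}$ containing no $T^n$-fixed point has vanishing $\chi_y$-genus (it admits a fixed-point-free circle action), so it does not affect your displayed formula.
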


For a compact manifold equipped with a torus action, its Euler number is equal to the Euler number of its fixed point set.

\begin{theo}\cite{Kob}\label{t3}
Let a torus $T^k$ act on a compact manifold $M$. Then the Euler number $\chi(M)$ of $M$ is equal to the sum of Euler numbers of its fixed components. That is,
\begin{center}$\displaystyle \chi(M) = \sum_{F\subset M^{T^k}}\chi(F)$.\end{center}
\end{theo}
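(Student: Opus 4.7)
The plan is to reduce the identity to a single circle action on $M$ and then to apply the Lefschetz fixed point theorem to a topological generator of that circle.

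The first step is to locate a circle subgroup $S^1 \subset T^k$ whose fixed point set agrees with $M^{T^k}$. Because $M$ is compact, only finitely many connected isotropy subgroups of the $T^k$-action occur, and their identity components are proper subtori of $T^k$; a one-parameter subgroup in generic position misses every such proper subtorus and therefore has the same fixed set as $T^k$ itself. This move reduces the problem to proving $\chi(M) = \sum_{F \subset M^{S^1}} \chi(F)$ for the restricted $S^1$-action.

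For the circle case I would pick $g \in S^1$ topologically generating $S^1$, so that $M^g = M^{S^1}$. Since $g$ is smoothly isotopic to $\mathrm{id}_M$ through the $S^1$-orbit, homotopy invariance of the Lefschetz number gives $L(g) = L(\mathrm{id}_M) = \chi(M)$. Meanwhile the fixed set $M^g$ is a disjoint union of closed submanifolds (a standard fact for smooth compact Lie group actions), and the $S^1$-weights on each normal bundle are nonzero, so the differential $dg$ has no eigenvalue $1$ on any normal bundle. Rotation by a nonzero angle satisfies $\det(I - dg) > 0$ on each invariant real two-plane, so the clean Lefschetz fixed point formula returns $L(g) = \sum_{F} \chi(F)$ with all local contributions positive, the sum taken over the components of $M^g = M^{T^k}$. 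Equating the two expressions for $L(g)$ yields the statement.

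The main obstacle I anticipate is justifying the clean form of the Lefschetz fixed point formula on normally nondegenerate fixed submanifolds; although this is classical, it is the step that needs the most care. A parallel route, sidestepping this subtlety, is to apply the Atiyah-Bott-Berline-Vergne localization theorem (Theorem \ref{ab}) to the equivariant Euler class $e_{T^k}(TM)$: pushing forward to a point gives $\chi(M)$ on the left-hand side, while the right-hand side collapses at each fixed component $F$ to $\int_F e(TF) = \chi(F)$ after setting the equivariant parameters to zero in a controlled way. Either path delivers the claimed identity.
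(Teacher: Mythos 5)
The paper offers no proof of this statement: it is quoted from Kobayashi's paper [Kob] as a known result, so there is nothing internal to compare against. Judged on its own, your argument is correct and is essentially the classical proof. The reduction to a circle is sound: a stabilizer $T_x$ with $T_x\neq T^k$ has identity component a proper subtorus, only finitely many such subtori occur by compactness, and a circle subgroup whose direction avoids the corresponding finitely many proper subspaces of the Lie algebra satisfies $M^{S^1}=M^{T^k}$. The Lefschetz step is also fine: for a topological generator $g$ one has $M^g=M^{S^1}$, $L(g)=\chi(M)$ by homotopy invariance, the normal bundle to each fixed component carries only nontrivial real $S^1$-weights (hence splits into rotation planes on which $\det(I-dg)=2-2\cos\theta>0$), and the clean Lefschetz formula gives $L(g)=\sum_F\chi(F)$; you rightly flag that formula as the one step needing a citation. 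A variant that sidesteps it, and which is closer to Kobayashi's original argument for isometries, is to apply the Poincar\'e--Hopf theorem to the vector field generating the $S^1$-action: its zero set is $M^{S^1}$, the linearization is nondegenerate on each normal bundle with positive determinant, so the index along each component $F$ is $\chi(F)$. One small caveat on your proposed ABBV alternative: Theorem \ref{ab} as stated requires $M$ compact and \emph{oriented}, whereas the present theorem assumes only compactness; this is harmless for the paper's use (almost complex manifolds are oriented) but means the localization route is strictly less general than your Lefschetz route.
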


To prove Theorem \ref{t11}, we first determine the Hirzebruch $\chi_y$-genus of a 6-dimensional almost complex torus manifold with Euler number 6.

\begin{pro}\label{p6}
Let $M$ be a 6-dimensional almost complex torus manifold with Euler number 6. Then $M$ has exactly 6 fixed points, and the Hirzebruch $\chi_y$-genus $\chi_y(M)$ of $M$ is either $\chi_y(M)=1-2y+2y^2-y^3$ or $\chi_y(M)=2-y+y^2-2y^3$.
\end{pro}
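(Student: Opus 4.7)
The plan is to combine three results from Section \ref{s5}: Theorem \ref{t3} (Euler number equals sum over fixed components), Lemma \ref{l1} (non-negativity, counting, and Poincaré-type symmetry for $\chi_y$ of torus-acted almost complex manifolds), and Theorem \ref{t1} (strict positivity of every $a_i$ for an almost complex torus manifold). The proof reduces to a short integer arithmetic problem after writing down the constraints these three facts impose on the coefficients of $\chi_y(M)$.

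First I would verify that the 6 in ``Euler number 6'' translates into exactly 6 isolated fixed points. Because $M$ is a $2n$-dimensional almost complex torus manifold with $n=3$, at any fixed point $p$ the isotropy representation decomposes $T_pM$ into three complex lines with nonzero weights (the weights span $\mathbb{Z}^3$ over $\mathbb{Q}$ by effectiveness), so $p$ is an isolated fixed point; thus the fixed point set is finite. Theorem \ref{t3} then gives
\begin{equation*}
\chi(M) \;=\; \sum_{p \in M^{T^3}} \chi(p) \;=\; |M^{T^3}| \;=\; 6.
\end{equation*}

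Next I write $\chi_y(M) = a_0 - a_1 y + a_2 y^2 - a_3 y^3$ with $a_i := a_i(M)$. Lemma \ref{l1}(3) applied with $n=3$ gives the palindrome symmetry $a_0 = a_3$ and $a_1 = a_2$. Lemma \ref{l1}(2) gives the count
\begin{equation*}
a_0 + a_1 + a_2 + a_3 \;=\; |M^{T^3}| \;=\; 6,
\end{equation*}
which together with symmetry becomes $a_0 + a_1 = 3$. Finally, Theorem \ref{t1} asserts $a_i > 0$ for every $i$, so $(a_0, a_1)$ is a pair of positive integers summing to $3$, forcing $(a_0, a_1) \in \{(1,2),(2,1)\}$. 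Substituting these back gives the two stated $\chi_y$-genera, $1 - 2y + 2y^2 - y^3$ or $2 - y + y^2 - 2y^3$.

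There is no real obstacle: the whole argument is arithmetic once the three ingredients above are invoked. The only point that warrants care is the verification that ``Euler number 6'' does mean ``six isolated fixed points,'' so I would state that step explicitly rather than leave it tacit, since it is what makes Lemma \ref{l1} applicable in the first place.
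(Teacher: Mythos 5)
Your proposal is correct and follows essentially the same route as the paper: Theorem \ref{t3} to get exactly 6 isolated fixed points, then Lemma \ref{l1} (sum and symmetry of the $a_i$) combined with Theorem \ref{t1} (strict positivity) to force $(a_0,a_1)\in\{(1,2),(2,1)\}$. The only difference is that you spell out why the fixed points are isolated, which the paper leaves implicit.
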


\begin{proof}
Since there are only finitely many fixed points and the Euler number of a point is 1, by Theorem \ref{t3}, $M$ has exactly 6 fixed points.

Let $\chi_y(M)=\sum_{i=0}^3 a_i(M) \cdot (-y)^i$. 
By Lemma \ref{l1} and Theorem \ref{t1}, it follows that $a_0(M)+\cdots + a_3(M)=6$, $a_0(M)=a_3(M)>0$, and $a_1(M)=a_2(M)>0$. These imply that either $a_0(M)=a_3(M)=1$ and $a_1(M)=a_2(M)=2$, or $a_0(M)=a_3(M)=2$ and $a_1(M)=a_2(M)=1$.
\end{proof}

Later, in Section \ref{s8} we shall show that in fact $\chi_y(M)=1-2y+2y^2-y^3$. With the above, one important step for proving Theorem \ref{t11} is that for a 6-dimensional almost complex torus manifold with Euler number 6, a 4-dimensional isotropy submanifold contains at most 4 fixed points.

\begin{lem}\label{p3}
Let $M$ be a 6-dimensional almost complex torus manifold with Euler number 6. Let $p$ be a fixed point and let $a, b, c$ be the weights at $p$. Let $S=\ker a \cap \ker b$ and let $F_0$ be a component of $M^S$ containing $p$. Then $F_0$ is a 4-dimensional almost complex torus submanifold and has at most 4 fixed points.
\end{lem}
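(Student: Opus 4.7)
The plan is to separate the statement into two claims: first that $F_0$ is a 4-dimensional almost complex torus submanifold, and then that it contains at most 4 fixed points. The first is a direct consequence of effectiveness, while the second reduces to a one-line edge count on the describing graph of $M$.

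For the submanifold claim, the key point is that effectiveness of the $T^3$-action forces the weights $a, b, c$ at $p$ to form a $\mathbb{Z}$-basis of the character lattice $\mathbb{Z}^3$. The subtorus $S = \ker a \cap \ker b$ is then a circle, and at $p$ the weights in $T_pM$ that annihilate $S$ are exactly $a$ and $b$. Hence $T_pF_0 = (T_pM)^S$ is complex 2-dimensional, so $F_0$ has real dimension $4$. Because $\{a,b\}$ is a $\mathbb{Z}$-basis of the annihilator of $S$ in $\mathbb{Z}^3$, the residual action of $T^3/S \cong T^2$ on $F_0$ is effective. Since $M^S$ is automatically an almost complex submanifold (the fixed set of a torus action preserving $J$), $F_0$ is a 4-dimensional almost complex torus manifold.

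For the upper bound, I would fix a graph $\Gamma$ describing $M$ without self-loops by Proposition \ref{p2} and extract the subgraph $\Gamma_{F_0}$ describing $F_0$ via Proposition \ref{p7}. Because $\dim M = 6$ and $\dim F_0 = 4$, the graph $\Gamma$ is $3$-valent while $\Gamma_{F_0}$ is $2$-valent. Writing $k$ for the number of vertices of $\Gamma_{F_0}$, equivalently the number of $T^3$-fixed points in $F_0$, the handshake identity gives $k$ edges in $\Gamma_{F_0}$. At each vertex of $\Gamma_{F_0}$, exactly $3 - 2 = 1$ of its three incident edges in $\Gamma$ leaves $F_0$, so there are precisely $k$ edges between $\Gamma_{F_0}$ and the remaining $6 - k$ vertices of $\Gamma$. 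Each outside vertex has valence $3$ in $\Gamma$ and can therefore absorb at most $3$ such edges, giving $k \leq 3(6-k)$ and hence $k \leq 4$. No deeper input is needed; the only delicate point is to apply Proposition \ref{p7} correctly so that $\Gamma_{F_0}$ is genuinely $2$-valent with vertex set equal to the $T^3$-fixed points of $F_0$, which is exactly what that proposition provides.
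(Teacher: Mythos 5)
Your first claim, that $F_0$ is a $4$-dimensional almost complex torus submanifold, is argued essentially as in the paper and is fine. The fixed-point bound, however, has a genuine gap at the sentence ``at each vertex of $\Gamma_{F_0}$, exactly $3-2=1$ of its three incident edges in $\Gamma$ leaves $F_0$.'' Proposition \ref{p7} tells you only which edges of $\Gamma$ belong to $\Gamma_{F_0}$, namely those whose labels are annihilated by $S$; it does not say that the remaining third edge at a vertex of $\Gamma_{F_0}$ ends at a vertex outside $F_0$. A priori the third edge at $q \in \Gamma_{F_0}$, whose label $w$ satisfies $w|_S \neq 0$, could terminate at another $T^3$-fixed point lying in $F_0$: such an edge fails to be an edge of $\Gamma_{F_0}$ without ``leaving'' $F_0$. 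If $m \geq 1$ of the third edges are internal in this sense, each one uses up the third slot at two vertices of $\Gamma_{F_0}$, so your count degrades to $k - 2m \leq 3(6-k)$, which permits $k=5$ (with $m=1$) and even $k=6$ (with $m=3$); the conclusion $k \leq 4$ does not follow as written. So the delicate point is not where you flagged it -- the $2$-valence of $\Gamma_{F_0}$ really is immediate from Proposition \ref{p7} -- but in this unproved exit claim.

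The gap is fillable in two lines. Since $F_0$ is a connected codimension-$2$ component of the fixed set of the circle $S$, its normal bundle is a complex line bundle on which $S$ acts with a single weight $s \neq 0$ that is constant along $F_0$ (the weight-constancy statement in Section \ref{s2}, applied to the $S$-action). At a $T^3$-fixed point $q \in F_0$ the label of the third edge is exactly the normal $T^3$-weight at $q$, so an internal third edge with label $w$ from $q$ to $q'$ would give normal weight $w$ at $q$ and $-w$ at $q'$, whence $s = w|_S = -w|_S$, i.e. $s=0$, a contradiction. Hence every third edge does exit $F_0$, and your inequality $k \leq 3(6-k)$, i.e. $k \leq 4$, stands. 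With this repair your argument is correct and genuinely different from the paper's: the paper assumes $F_0$ has at least $5$ fixed points, applies Lemma \ref{l1} to the $T^3$-actions on the components of $M^S$ and the Kosniowski formula (Theorem \ref{t4}) to the $S$-action on $M$, and derives a contradiction with the two possible genera $\chi_y(M)=1-2y+2y^2-y^3$ or $2-y+y^2-2y^3$ of Proposition \ref{p6}. Your patched combinatorial count is more elementary, avoiding the Hirzebruch-genus bookkeeping entirely, though the paper's method has the virtue of constraining not just the number of fixed points of $F_0$ but their distribution among the coefficients $a_i$.
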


\begin{proof}
By Proposition \ref{p6}, $M$ has exactly 6 fixed points. 
Since the weights at $p$ form a basis of $\mathbb{Z}^3$, $S$ is an 1-dimensional subtorus of $T^3$, that is, $S$ is a subcircle of $T^3$. 
Since $S$ only fixes $a$ and $b$ among the weights at $p$, it follows that $F_0$ is 4-dimensional. The 2-dimensional torus $T^3/S$ acts effectively on $F_0$, having $p$ as a fixed point.
Thus, $F_0$ is an almost complex torus submanifold\footnote{By Theorem \ref{t211}, $F_0$ has at least 3 fixed points.}.

Suppose on the contrary that $F_0$ has at least 5 fixed points. The $T^3$-action on $M$ restricts to act on $F_0$. Since the $T^3$-action on $F_0$ has at least 5 fixed points, applying Lemma \ref{l1} to the $T^3$-action on $F_0$, we have that $a_0(F_0)$, $a_1(F_0)$, $a_2(F_0)$ $\geq 0$, $a_0(F_0)+a_1(F_0)+a_2(F_0) \geq 5$, and $a_0(F_0)=a_2(F_0)$, where $\chi_y(F_0) = a_0(F_0)-a_1(F_0) \cdot y+a_2(F_0) \cdot y^2$. In particular, one of the following holds:
\begin{enumerate}
\item $a_0(F_0)=a_2(F_0)$ is bigger than or equal to 2.
\item $a_1(F_0)$ is bigger than or equal to 3.
\end{enumerate}

Let $F$ be any fixed component of the set $M^S$. The $T^3$-action on $F$ has isolated fixed points because its fixed point set is $F \cap M^{T^3}$. Applying Lemma \ref{l1} to the $T^3$-action on $F$, we have that $a_i(F) \geq 0$ for $0 \leq i \leq \frac{1}{2}\dim F$, 
where $\chi_y(F)=\sum_{i=0}^{\frac{1}{2}\dim F} a_i(F) \cdot (-y)^i$. 

Applying Theorem \ref{t4} to the $S$-action on $M$, we have
\begin{equation} \label{eq1}
\displaystyle \chi_y (M)= \sum_{F\subset M^{S}} (-y)^{d(-,F)}\cdot \chi_y(F),
\end{equation}
where for each fixed component $F\subset M^S$, $d(-,F)$ is the number of negative $S$-weights in the normal bundle of $F$.

Because $F_0$ is a 4-dimensional submanifold in a 6-dimensional manifold $M$, its normal bundle $NF_0$ has exactly one weight. Thus, either (a) $d(-,F_0)=0$ or (b) $d(-,F_0)=1$.

Suppose that Case (1) holds, that is, $a_0(F_0)=a_2(F_0) \geq 2$. Assume that (a) $d(-,F_0)=0$. By Proposition \ref{p6}, the Hirzebruch $\chi_y$-genus of $M$ is either (i) $\chi_y(M)=1-2y+2y^2-y^3$ or (ii) $\chi_y(M)=2-y+y^2-2y^3$. The case (i) cannot hold because $a_0(F_0) \geq 2$ and $d(-,F_0)=0$ imply $a_0(M) \geq 2$ by Equation \eqref{eq1}; other fixed components $F$ contribute the coefficients of the constant term of $\chi_y(M)$ by non-negative integers. Similarly, the case (ii) cannot hold because $a_2(F_0) \geq 2$ and $d(-,F_0)=0$ imply $a_2(M) \geq 2$ by Equation \eqref{eq1}.

Next, assume that (b) $d(-,F_0)=1$. We cannot have (i) $\chi_y(M)=1-2y+2y^2-y^3$ because $a_2(F_0) \geq 2$ and $d(-,F_0)=1$ imply that $a_3(M) \geq 2$, and we cannot have (ii) $\chi_y(M)=2-y+y^2-2y^3$ because $a_0(F_0) \geq 2$ and $d(-,F_0)=1$ imply that $a_1(M) \geq 2$.

Finally, Case (2) cannot hold; by Equation \eqref{eq1}, $a_1(F_0) \geq 3$ implies that $a_1(M)$ or $a_2(M)$ must be at least 3, but $\chi_y(M)=1-2y+2y^2-y^3$ or $\chi_y(M)=2-y+y^2-2y^3$.
\end{proof}

\section{Graphs - dimension 6 and Euler number 6} \label{s6}

We determine possible graphs describing a 6-dimensional almost complex torus manifold with Euler number 6; two possible types of graphs occur.

\begin{figure}
\begin{center}
\begin{subfigure}[b][4.2cm][s]{.4\textwidth}
\begin{tikzpicture}[state/.style ={circle, draw}]
\node[vertex] (a) at (0, 0) {$q_{2}$};
\node[vertex] (b) at (-1.2, 1.7) {$q_{1}$};
\node[vertex] (c) at (0, 3.4) {$q_{3}$};
\node[vertex] (d) at (2, 3.4) {$q_{6}$};
\node[vertex] (e) at (3.2, 1.7) {$q_{4}$};
\node[vertex] (f) at (2, 0) {$q_{5}$};
\path (b) [->] edge node [left] {$w_{1,2}$} (a);
\path (b) [->] edge node [left] {$w_{1,3}$} (c);
\path (a) [->] edge node [pos=.2] [right=-.1] {$w_{2,3}$} (c);
\path (e) [->] edge node [right] {$w_{4,5}$} (f);
\path (e) [->] edge node [right] {$w_{4,6}$} (d);
\path (f) [->] edge node [pos=.2] [left=-.1] {$w_{5,6}$} (d);
\path (a) [->] edge node [below] {$w_{2,5}$} (f);
\path (b) [->] edge node [above] {$w_{1,4}$} (e);
\path (c) [->] edge node [above] {$w_{3,6}$} (d);
\end{tikzpicture}
\caption{}\label{f1a}
\vspace{\baselineskip}
\end{subfigure}\qquad
\begin{subfigure}[b][4.3cm][s]{.4\textwidth}
\begin{tikzpicture}[state/.style ={circle, draw}]
\node[vertex] (a) at (0, 0) {$q_{2}$};
\node[vertex] (b) at (-1.2, 1.7) {$q_{1}$};
\node[vertex] (c) at (0, 3.4) {$q_{3}$};
\node[vertex] (d) at (2, 3.4) {$q_{6}$};
\node[vertex] (e) at (3.2, 1.7) {$q_{4}$};
\node[vertex] (f) at (2, 0) {$q_{5}$};
\path (b) [->] edge node [left] {$w_{1,2}$} (a);
\path (b) [->] edge node [left] {$w_{1,3}$} (c);
\path (a) [->] edge node [pos=.2] [left=-.1] {$w_{2,6}$} (d);
\path (e) [->] edge node [right] {$w_{4,5}$} (f);
\path (e) [->] edge node [right] {$w_{4,6}$} (d);
\path (c) [->] edge node [pos=.2] [right=-.1] {$w_{3,5}$} (f);
\path (a) [->] edge node [below] {$w_{2,5}$} (f);
\path (b) [->] edge node [pos=.3] [above=-.1] {$w_{1,4}$} (e);
\path (c) [->] edge node [above] {$w_{3,6}$} (d);
\end{tikzpicture}
\caption{}\label{f1b}
\vspace{\baselineskip}
\end{subfigure}\qquad
\vspace{\baselineskip}
\caption{Possible graphs}\label{f1}
\end{center}
\end{figure}
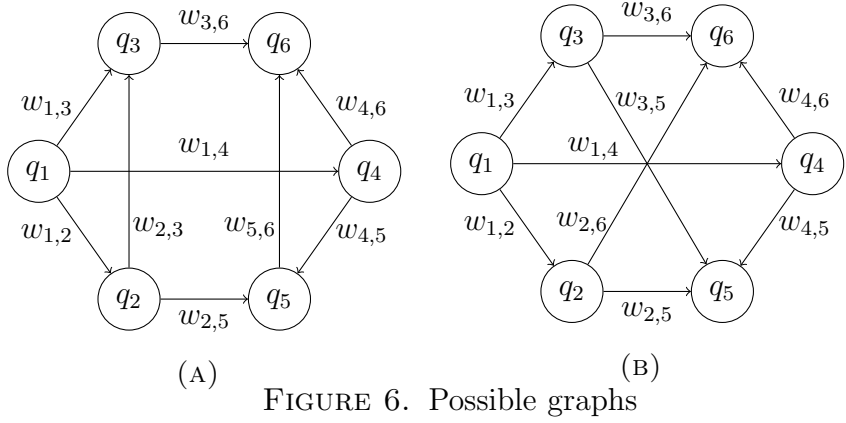

\begin{pro}\label{p9}
Let $M$ be a 6-dimensional almost complex torus manifold with Euler number 6. Then one of graphs in Figure \ref{f1} describes $M$.
\end{pro}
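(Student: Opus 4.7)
The plan is to reduce Proposition \ref{p9} to a purely combinatorial classification of cubic simple graphs on six vertices. By Proposition \ref{p6}, $M$ has exactly six fixed points, and by Proposition \ref{p2} there is a graph $\Gamma$ describing $M$ with no self-loops and no multiple edges. Since $\dim M = 6$, each fixed point carries three weights, and because $\Gamma$ has no self-loops, Definition \ref{d1}(ii) forces each vertex of $\Gamma$ to be incident to exactly three edges. Thus $\Gamma$ is a $3$-regular simple graph on six vertices, and once we show that the underlying graph is either the triangular prism or the complete bipartite graph $K_{3,3}$, the labels $w_{i,j}$ and orientations displayed in Figure \ref{f1} are just names for the weights that can be read off from Definition \ref{d1}.

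To carry out the classification, I would fix a vertex $q_1$, call its three neighbors $q_2, q_3, q_4$, and let $q_5, q_6$ denote the remaining two vertices. I would split into cases according to the number of edges lying inside $\{q_2, q_3, q_4\}$. If there are no such edges, then each of $q_2, q_3, q_4$ must draw its other two neighbors from $\{q_5, q_6\}$, hence is joined to both; this saturates all degrees and produces $K_{3,3}$ with parts $\{q_1, q_5, q_6\}$ and $\{q_2, q_3, q_4\}$, matching Figure \ref{f1b}. If exactly one such edge is present, say $q_2 q_3$, then $q_4$ must take both of its remaining neighbors from $\{q_5, q_6\}$, forcing the edges $q_4 q_5$ and $q_4 q_6$. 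Each of $q_2, q_3$ then needs one further neighbor in $\{q_5, q_6\}$; the assignments that send both to the same vertex saturate it and leave the other with no available neighbors, so after possibly swapping $q_5$ and $q_6$ the only option is $q_2 q_5$ and $q_3 q_6$. The remaining unused slots at $q_5$ and $q_6$ are then filled by the edge $q_5 q_6$, yielding the triangular prism with triangles $\{q_1, q_2, q_3\}$ and $\{q_4, q_5, q_6\}$ joined by the matching $\{q_1 q_4, q_2 q_5, q_3 q_6\}$, matching Figure \ref{f1a}. Finally, if at least two edges lie inside $\{q_2, q_3, q_4\}$, say $q_2 q_3$ and $q_2 q_4$, then $q_2$ is already saturated; a short case check on the placement of the remaining edge at $q_3$ (or $q_4$) shows that $q_5$ and $q_6$ cannot both attain degree three, so this situation does not arise.

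The main obstacle is simply the bookkeeping in the middle case, where one must rule out the alternative matchings between $\{q_2, q_3\}$ and $\{q_5, q_6\}$ by degree counts; the argument is elementary and uses nothing beyond Propositions \ref{p6} and \ref{p2} together with the definition of an edge label. I do not anticipate any deeper difficulty, since no weight information is needed at this stage; that information only enters in later results (Proposition \ref{c1}, Lemma \ref{c2}) to constrain the labels of the two possible graphs.
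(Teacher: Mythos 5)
Your proposal is correct and follows essentially the same route as the paper: both reduce the problem to classifying $3$-regular simple graphs on six vertices (using Propositions \ref{p6} and \ref{p2} to get six vertices and a simple describing graph, and the three weights at each fixed point to get $3$-regularity), fix the neighbors $q_2,q_3,q_4$ of $q_1$, and case on the edges among them to obtain the triangular prism (Figure \ref{f1a}) or $K_{3,3}$ (Figure \ref{f1b}). The degree-counting eliminations you describe, including ruling out two or more edges inside $\{q_2,q_3,q_4\}$, match the paper's argument.
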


\begin{figure}
\begin{center}
\begin{subfigure}[b][4cm][s]{.4\textwidth}
\begin{tikzpicture}[state/.style ={circle, draw}]
\node[vertex] (a) at (0, 0) {$q_{2}$};
\node[vertex] (b) at (-1.2, 1.7) {$q_{1}$};
\node[vertex] (c) at (0, 3.4) {$q_{3}$};
\node[vertex] (d) at (2, 3.4) {$q_{6}$};
\node[vertex] (e) at (3.2, 1.7) {$q_{4}$};
\node[vertex] (f) at (2, 0) {$q_{5}$};
\path (b) [->] edge node {$$} (a);
\path (b) [->] edge node {$$} (c);
\path (b) [->] edge node {$$} (e);
\end{tikzpicture}
\vspace{\baselineskip}
\end{subfigure}\qquad
\vspace{\baselineskip}
\caption{}\label{f7}
\end{center}
\end{figure}

\begin{proof}
By Proposition \ref{p6}, $M$ has exactly 6 fixed points. We label the fixed points by $q_1, \cdots, q_6$. By Proposition \ref{p2}, there is a multigraph $\Gamma$ describing $M$ that has no multiple edges and no self-loops. To each fixed point $q_i$ we assign a vertex, also denoted by $q_i$. Without loss of generality, by relabeling vertices, we may assume that for each $i \in \{2,3,4\}$, there is an edge between $q_1$ and $q_i$, as in Figure \ref{f7}.

Suppose that there is an edge $e$ from $q_i$ to $q_j$ with label $w$. Let $\Gamma'$ be a multigraph obtained from $\Gamma$ by reversing the direction of the edge $e$ and replacing the label $w$ of the edge $e$ with its negative $-w$. Then $\Gamma'$ also describes $M$. Therefore, by reversing the direction of an edge and replacing the label of the edge with its negative, we may assume that if there is an edge from $q_i$ to $q_j$, then $i<j$ (index increasing.) If there is an edge $e_{i,j}$ from $q_i$ to $q_j$, let $w_{i,j}$ be the label of the edge $e_{i,j}$.

We have following two cases.
\begin{enumerate}[(A)]
\item There is an edge between $q_2$, $q_3$, and $q_4$.
\item There is no edge between $q_2$, $q_3$, and $q_4$.
\end{enumerate}

Assume that Case (A) holds. Without loss of generality, by relabeling $q_2$, $q_3$, and $q_4$, we may assume that there is an edge from $q_2$ to $q_3$. Then we show that there does not exist an edge between $q_2$ and $q_4$. To show this, suppose on the contrary that there is an edge between $q_2$ and $q_4$. Since $\Gamma$ has no self-loops and no multiple edges, the three edges at $q_5$ must be connected to $q_3$, $q_4$, and $q_6$, respectively. But then $q_6$ must have a self-loop, which leads to a contradiction.

Similarly, by symmetry of $\Gamma$ there is no edge between $q_3$ and $q_4$. Then there must be an edge between $p_4$ and $p_5$ and an edge between $p_4$ and $p_6$. Thus, the remaining edge of $q_2$ is connected to either $q_5$ or $q_6$. Without loss of generality, assume that there is an edge between $q_2$ and $q_5$. If we complete drawing the multigraph with the required properties, the multigraph $\Gamma$ we get is Figure \ref{f1a}.

Assume that Case (B) holds. Then for each $i \in \{2,3,4\}$ and for each $j \in \{5,6\}$, there must be an edge between $q_i$ and $q_j$. The multigraph $\Gamma$ we get is Figure \ref{f1b}. \end{proof}

We show that Figure \ref{f1b} does not occur as a multigraph describing an almost complex torus manifold.

\begin{pro}\label{c1}
Figure \ref{f1b} cannot describe an almost complex torus manifold.
\end{pro}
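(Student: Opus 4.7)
The plan is to assume, for contradiction, that Figure \ref{f1b} describes an almost complex torus manifold $M$, and then compare the three $4$-dimensional isotropy submanifolds through $q_1$ to reach a combinatorial impossibility. Since the graph is $3$-valent with $6$ vertices, Proposition \ref{p7} (applied with trivial $S$) will force $\dim M=6$ and, by Theorem \ref{t3}, $\chi(M)=6$; hence Lemma \ref{p3} applies to $M$. I will write the weights at $q_1$ as $w_{1,2}$, $w_{1,3}$, $w_{1,4}$, which form a basis of $\mathbb{Z}^3$.

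For each unordered pair $\{i,j\}\subset\{2,3,4\}$, I would take $S_{ij}=\ker w_{1,i}\cap\ker w_{1,j}$ and let $F_{ij}$ be the component of $M^{S_{ij}}$ containing $q_1$. By Lemma \ref{p3}, $F_{ij}$ is a $4$-dimensional almost complex torus submanifold with at most $4$ fixed points, so by Proposition \ref{p7} its subgraph $\Gamma_{F_{ij}}$ is $2$-valent with at most $4$ vertices. The edges $e_{1,i}$ and $e_{1,j}$ must lie in $\Gamma_{F_{ij}}$ since their labels lie in $\langle w_{1,i},w_{1,j}\rangle$, so $\{q_1,q_i,q_j\}\subseteq V(\Gamma_{F_{ij}})$. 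Because Figure \ref{f1b} contains no edge between any two of $q_2,q_3,q_4$, the subgraph cannot be a triangle, forcing it to be a $4$-cycle $q_1\text{--}q_i\text{--}q_{\sigma(\{i,j\})}\text{--}q_j\text{--}q_1$ whose additional vertex $q_{\sigma(\{i,j\})}$ lies in $\{q_5,q_6\}$.

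The next step is to pin down $\sigma(\{i,j\})$ via Lemma \ref{l5}. Applying it to the edge $e_{1,i}$ will yield
\[
\{w_{i,5},w_{i,6}\}\equiv \{w_{1,j_1},w_{1,j_2}\}\pmod{w_{1,i}},
\]
where $\{j_1,j_2\}=\{2,3,4\}\setminus\{i\}$. Because $\{w_{i,5},w_{i,6},-w_{1,i}\}$ is a basis of $\mathbb{Z}^3$, neither $w_{i,5}$ nor $w_{i,6}$ is a multiple of $w_{1,i}$, and the two cannot jointly lie in any single rank-two sublattice $\langle w_{1,i},w_{1,j}\rangle$. Hence exactly one of $w_{i,5},w_{i,6}$ belongs to $\langle w_{1,i},w_{1,j}\rangle$, defining a bijection $\pi_i\colon\{5,6\}\to\{j_1,j_2\}$ so that the edge of $\Gamma_{F_{ij}}$ out of $q_i$ terminates at $q_{\pi_i^{-1}(j)}$. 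Reading the $4$-cycle from $q_j$ as well will force $\pi_i^{-1}(j)=\pi_j^{-1}(i)$ for every pair $\{i,j\}\subset\{2,3,4\}$.

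Finally, I would set $a=\pi_2^{-1}(3)=\pi_3^{-1}(2)$, $b=\pi_2^{-1}(4)=\pi_4^{-1}(2)$, $c=\pi_3^{-1}(4)=\pi_4^{-1}(3)$, all in $\{5,6\}$. Injectivity of $\pi_2$, $\pi_3$, $\pi_4$ will give $a\neq b$, $a\neq c$, $b\neq c$ respectively, but $\{5,6\}$ cannot accommodate three pairwise distinct elements, delivering the contradiction. The main obstacle is the third paragraph: one must verify that the bijection $\pi_i$ is well defined (both weights at $q_i$ other than $-w_{1,i}$ are off the line $\mathbb{Z}w_{1,i}$ and land in different rank-two sublattices) and that the cycle-consistency $\pi_i^{-1}(j)=\pi_j^{-1}(i)$ holds; these are where the basis property of the weights and Lemma \ref{l5} are essential.
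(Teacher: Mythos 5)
Your proposal is correct and follows essentially the same route as the paper: both consider the three $4$-dimensional isotropy submanifolds through $q_1$ (via Lemma \ref{p3} and Proposition \ref{p7}), observe that each must be a $4$-cycle whose fourth vertex is exactly one of $q_5,q_6$, and use Lemma \ref{l5} to derive incompatible constraints on which one it is. The only difference is organizational: the paper fixes $\tilde{F}_1\ni q_6$ without loss of generality and eliminates the remaining cases one by one, while you encode the same congruence information in the bijections $\pi_i$ and conclude with a pigeonhole on three pairwise-distinct extra vertices in $\{q_5,q_6\}$.
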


\begin{figure}
\begin{center}
\begin{subfigure}[b][5cm][s]{.4\textwidth}
\centering
\begin{tikzpicture}[state/.style ={circle, draw}]
\node[vertex] (a) at (0, 0) {$q_{2}$};
\node[vertex] (b) at (-1.2, 1.7) {$q_{1}$};
\node[vertex] (c) at (0, 3.4) {$q_{3}$};
\node[vertex] (d) at (2, 3.4) {$q_{6}$};
\path (b) [->] edge node [left] {$a=w_{1,2}$} (a);
\path (b) [->] edge node [left] {$b=w_{1,3}$} (c);
\path (a) [->] edge node [right] {$w_{2,6}$} (d);
\path (c) [->] edge node [above] {$w_{3,6}$} (d);
\end{tikzpicture}
\caption{$\tilde{F_1}$}\label{f5a}
\end{subfigure}\qquad
\begin{subfigure}[b][5cm][s]{.5\textwidth}
\centering
\vspace{15mm}
\begin{tikzpicture}[state/.style ={circle, draw}]
\node[vertex] (a) at (0, 0) {$q_{2}$};
\node[vertex] (b) at (-1.2, 1.7) {$q_{1}$};
\node[vertex] (e) at (3.2, 1.7) {$q_{4}$};
\node[vertex] (f) at (2, 0) {$q_{5}$};
\path (b) [->] edge node [left] {$a=w_{1,2}$} (a);
\path (e) [->] edge node [right] {$w_{4,5}$} (f);
\path (a) [->] edge node [below] {$w_{2,5}$} (f);
\path (b) [->] edge node [above] {$c=w_{1,4}$} (e);
\end{tikzpicture}
\caption{$\tilde{F_2}$}\label{f5b}
\end{subfigure}\qquad
\begin{subfigure}[b][3.5cm][s]{.5\textwidth}
\centering
\begin{tikzpicture}[state/.style ={circle, draw}]
\node[vertex] (b) at (-1.2, 1.7) {$q_{1}$};
\node[vertex] (c) at (0, 3.4) {$q_{3}$};
\node[vertex] (d) at (2, 3.4) {$q_{6}$};
\node[vertex] (e) at (3.2, 1.7) {$q_{4}$};
\path (b) [->] edge node [left] {$b=w_{1,3}$} (c);
\path (e) [->] edge node [right]{$w_{4,6}$} (d);
\path (b) [->] edge node [below]{$c=w_{1,4}$} (e);
\path (c) [->] edge node [above] {$w_{3,6}$} (d);
\end{tikzpicture}
\vspace{1mm}
\caption{$\tilde{F_3}$}\label{f5c}
\end{subfigure}\qquad
\caption{}\label{f5}
\end{center}
\end{figure}

\begin{proof}
Assume on the contrary that Figure \ref{f1b} describes an almost complex torus manifold $M$; then $\dim M=6$ and there are 6 fixed points.

Let $w_{1,2}=a$, $w_{1,3}=b$, and $w_{1,4}=c$. Let $\tilde{S}_1 := \ker a \cap \ker b$ and consider the isotropy submanifold $\tilde{F}_1$ of $M^{\tilde{S}_1}$ that contains $q_1$. By Lemma \ref{p3}, $\tilde{F}_1$ is 4-dimensional almost complex torus submanifold of $M$.

By Proposition \ref{p7}, some 2-valent sub-multigraph $\Gamma_{\tilde{F}_1}$ of Figure \ref{f1b} describes the $T^3$-action on $\tilde{F}_1$. Because there is an edge from $q_1$ to $q_2$ (from $q_1$ to $q_3$) with label $a$ ($b$) that satisfies $g^a=1$ ($g^b=1$) for all $g \in \tilde{S}_1$, Proposition \ref{p7} (2) implies that $q_2 \in \tilde{F}_1$ ($q_3 \in \tilde{F}_1$.) Moreover, since $\Gamma_{\tilde{F}_1}$ is 2-valent, two edges of $q_2$ ($q_3$) must be in $\Gamma_{\tilde{F}_1}$. On the other hand, by Lemma \ref{p3}, $\Gamma_{\tilde{F}_1}$ contains at most 4 vertices. Thus, $\Gamma_{\tilde{F}_1}$ cannot contain $q_4$, and must contain exactly one of $q_5$ and $q_6$.

Without loss of generality, by vertical symmetry of Figure \ref{f1b}, we may assume that $\tilde{F}_1$ contains $q_6$. Then Figure \ref{f5a} describes the $T^3$-action on $\tilde{F}_1$. Applying Lemma \ref{l5} to the $T^3$-action on $\tilde{F}_1$ with $w=a$ ($w=b$), since $q_1$ and $q_2$ ($q_1$ and $q_3$) are in the same component of $M^{\ker a}$ ($M^{\ker b}$), the $T^3$-weights $\{a,b\}$ in $T_{q_1}\tilde{F}_1$ and $\{-a,w_{2,6}\}$ in $T_{q_2}\tilde{F}_1$ ($\{a,b\}$ in $T_{q_1}\tilde{F}_1$ and $\{-b,w_{3,6}\}$ in $T_{q_3}\tilde{F}_1$) are equal modulo $a$ (modulo $b$.) Thus $w_{2,6}=b+k_1 a$ for some integer $k_1$ ($w_{3,6}=a+k_2 b$ for some integer $k_2$.)

Second, let $\tilde{F}_2$ be the 4-dimensional component of $M^{\tilde{S}_2}$ that contains $q_1$, where $\tilde{S}_2 = \ker a \cap \ker c$. By Proposition \ref{p7} and Lemma \ref{p3}, some 2-valent sub-multigraph $\Gamma_{\tilde{F_2}}$ of Figure \ref{f1b} describes the $T^3$-action on $\tilde{F}_2$ and contains at most 4 vertices. Thus $\Gamma_{\tilde{F_2}}$ contains exactly one of $q_5$ and $q_6$. Suppose that $\Gamma_{\tilde{F_2}}$ contains $q_6$. Then the weights $\{-w_{4,6}, -b-k_1a\}$ in $T_{q_6}\tilde{F_2}$ are linear combinations of $a$ and $c$, which leads to a contradiction since $\{a,b,c\}$ form a basis of $\mathbb{Z}^3$. Thus $\tilde{F}_2$ contains $q_1$, $q_2$, $q_4$, and $q_5$; see Figure \ref{f5b}. Applying Lemma \ref{l5} to the $T^3$-action on $\tilde{F}_2$ with $w=c$, since $q_1$ and $q_4$ are in the same component of $M^{\ker c}$, the $T^3$-weights $\{a,c\}$ in $T_{q_1}\tilde{F}_2$ and $\{-c,w_{4,5}\}$ in $T_{q_4}\tilde{F}_2$ are equal modulo $c$. Thus $w_{4,5}=a+k_3 c$ for some integer $k_3$.

Third, let $\tilde{F}_3$ be the 4-dimensional component of $M^{\tilde{S}_3}$ that contains $q_1$, where $\tilde{S}_3 = \ker b \cap \ker c$. By Proposition \ref{p7} and Lemma \ref{p3}, some 2-valent sub-multigraph $\Gamma_{\tilde{F_3}}$ of Figure \ref{f1b} describes the $T^3$-action on $\tilde{F}_3$ and contains at most 4 vertices. Thus $\Gamma_{\tilde{F_3}}$ contains exactly one of $q_5$ and $q_6$. Suppose that $\tilde{F}_3$ contains $q_5$. Then the weights $\{-w_{3,5}, -a-k_3c\}$ in $T_{q_5}\tilde{F}_3$ are linear combinations of $b$ and $c$, which leads to a contradiction. Therefore, $\tilde{F}_3$ contains $q_6$; see Figure \ref{f5c}. Then the weights $\{-b,a+k_2b\}$ in $T_{q_3}\tilde{F}_3$ are linear combinations of $b$ and $c$, which leads to a contradiction. \end{proof}

Next, for a 6-dimensional almost complex torus manifold with Euler number 6, we study 4-dimensional almost complex torus submanifolds and their sub-multigraphs.

\begin{lemma}\label{c2}
Let $M$ be an almost complex torus manifold described by Figure \ref{f1a}. Then there exist 4-dimensional almost complex torus submanifolds $F_1, \cdots, F_5$ described by Figure \ref{f6a}, $\cdots$, \ref{f6e}, respectively.
\end{lemma}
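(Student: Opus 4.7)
The plan is to exhibit, for each of the five claimed submanifolds, a rank-one subtorus $S_i \subset T^3$ obtained as the common kernel of two weights appearing at a carefully chosen vertex of Figure~\ref{f1a}, define $F_i$ to be the connected component of $M^{S_i}$ containing that vertex, and then apply Proposition~\ref{p7} together with Lemma~\ref{p3} and Lemma~\ref{l5} to identify the sub-multigraph describing $F_i$ with the prescribed one in Figure~\ref{f6a}--\ref{f6e}.

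Concretely I would take $S_1 := \ker w_{1,2} \cap \ker w_{1,3}$ and $S_2 := \ker w_{4,5} \cap \ker w_{4,6}$ so that $F_1 \ni q_1$ and $F_2 \ni q_4$ are the two triangle-like submanifolds on the left and right of Figure~\ref{f1a}. For the three ``square'' submanifolds I would take $S_3 := \ker w_{1,2} \cap \ker w_{1,4}$, $S_4 := \ker w_{1,3} \cap \ker w_{1,4}$, and $S_5 := \ker w_{2,3} \cap \ker w_{3,6}$ (or analogous pairs), each with $F_i$ the component of $M^{S_i}$ containing the corresponding vertex. At every such choice the two selected weights are linearly independent (at $q_1$ the three weights form a basis of $\mathbb{Z}^3$, and at the other vertices any two weights that appear together as labels of distinct edges are linearly independent since Figure~\ref{f1a} has no multiple edges), so each $S_i$ is a subcircle and each $F_i$ is a $4$-dimensional almost complex torus submanifold by Lemma~\ref{p3}.

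The core argument is then uniform across $i$: by Proposition~\ref{p7}, a sub-multigraph $\Gamma_{F_i}$ of Figure~\ref{f1a} describes $F_i$, and it is $2$-valent because $\dim F_i = 4$; its edges are precisely those edges of Figure~\ref{f1a} whose labels are annihilated by $S_i$. At the base vertex the two defining edges of $S_i$ automatically lie in $\Gamma_{F_i}$, giving two further vertices. At each new vertex I then apply Lemma~\ref{l5} to the already-used edge label $w$, which equates (modulo $w$) the three weights at the new vertex with the three weights at the base vertex. This forces one of the two remaining weights at the new vertex to lie in the $\mathbb{Z}$-span of the two weights defining $S_i$, and the corresponding edge is thereby shown to belong to $\Gamma_{F_i}$; iterating this closes up the sub-multigraph as a triangle (for $F_1, F_2$) or as a $4$-cycle (for $F_3, F_4, F_5$), matching Figure~\ref{f6a}--\ref{f6e}.

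The main obstacle is ruling out the ``wrong'' $2$-valent continuation of $\Gamma_{F_i}$. For example, in $F_3$ the vertex $q_2$ has three incident edges $q_1q_2, q_2q_3, q_2q_5$, and Lemma~\ref{l5} applied along $q_1q_2$ leaves two combinatorial possibilities for which of $q_2q_3$ or $q_2q_5$ lies in $\Gamma_{F_3}$. I would show that the ``wrong'' choice propagates through $q_3$ and forces $\Gamma_{F_3}$ to contain $q_5$ or $q_6$ as well, producing at least five vertices and contradicting the at-most-$4$ bound of Lemma~\ref{p3}. The same propagation-plus-cap argument eliminates unwanted diagonal contributions in the other $F_i$, leaving exactly the sub-multigraphs of Figure~\ref{f6a}--\ref{f6e}, which completes the proof.
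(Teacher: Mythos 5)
Your proposal is correct and follows essentially the same route as the paper: the same subcircles $\ker w_{i,j}\cap\ker w_{i,k}$ based at a chosen vertex, the same appeal to Proposition~\ref{p7} for a $2$-valent sub-multigraph and to Lemma~\ref{p3} for the at-most-$4$-vertex bound, and the same combinatorial elimination of the wrong $2$-valent continuations of $\Gamma_{F_i}$. Your extra invocation of Lemma~\ref{l5} is harmless but redundant here, since $2$-valence already forces exactly one of the two remaining edges at each new vertex into the subgraph, and the vertex-count contradiction alone pins down which one.
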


\begin{figure}
\begin{center}
\begin{subfigure}[b][5.5cm][s]{.2\textwidth}
\centering
\begin{tikzpicture}[state/.style ={circle, draw}]
\node[vertex] (a) at (0, 0) {$q_{2}$};
\node[vertex] (b) at (-1.2, 1.7) {$q_{1}$};
\node[vertex] (c) at (0, 3.4) {$q_{3}$};
\path (b) [->] edge node [left] {$w_{1,2}$} (a);
\path (b) [->] edge node [left] {$w_{1,3}$} (c);
\path (a) [->] edge node [right] {$w_{2,3}$} (c);
\end{tikzpicture}
\caption{$F_1$}\label{f6a}
\vspace{\baselineskip}
\end{subfigure}\qquad
\begin{subfigure}[b][5.5cm][s]{.2\textwidth}
\centering
\begin{tikzpicture}[state/.style ={circle, draw}]
\node[vertex] (d) at (2, 3.4) {$q_{6}$};
\node[vertex] (e) at (3.2, 1.7) {$q_{4}$};
\node[vertex] (f) at (2, 0) {$q_{5}$};
\path (e) [->] edge node [right] {$w_{4,5}$} (f);
\path (e) [->] edge node [right] {$w_{4,6}$} (d);
\path (f) [->] edge node [left]{$w_{5,6}$} (d);
\end{tikzpicture}
\caption{$F_2$}\label{f6b}
\vspace{\baselineskip}
\end{subfigure}\qquad
\begin{subfigure}[b][4cm][s]{.4\textwidth}
\centering
\begin{tikzpicture}[state/.style ={circle, draw}]
\node[vertex] (a) at (0, 0) {$q_{2}$};
\node[vertex] (b) at (-1.2, 1.7) {$q_{1}$};
\node[vertex] (e) at (3.2, 1.7) {$q_{4}$};
\node[vertex] (f) at (2, 0) {$q_{5}$};
\path (b) [->] edge node [left] {$w_{1,2}$} (a);
\path (e) [->] edge node [right] {$w_{4,5}$} (f);
\path (a) [->] edge node [below] {$w_{2,5}$} (f);
\path (b) [->] edge node [above] {$w_{1,4}$} (e);
\end{tikzpicture}
\caption{$F_3$}\label{f6c}
\vspace{\baselineskip}
\end{subfigure}\qquad
\begin{subfigure}[b][4cm][s]{.4\textwidth}
\centering
\vspace{-10mm}
\begin{tikzpicture}[state/.style ={circle, draw}]
\node[vertex] (b) at (-1.2, 1.7) {$q_{1}$};
\node[vertex] (c) at (0, 3.4) {$q_{3}$};
\node[vertex] (d) at (2, 3.4) {$q_{6}$};
\node[vertex] (e) at (3.2, 1.7) {$q_{4}$};
\path (b) [->] edge node [left] {$w_{1,3}$} (c);
\path (e) [->] edge node [right] {$w_{4,6}$} (d);
\path (b) [->] edge node [below] {$w_{1,4}$} (e);
\path (c) [->] edge node [above] {$w_{3,6}$} (d);
\end{tikzpicture}
\vspace{12mm}
\caption{$F_4$}\label{f6d}
\vspace{\baselineskip}
\end{subfigure}\qquad\begin{subfigure}[b][5cm][s]{.4\textwidth}
\centering
\begin{tikzpicture}[state/.style ={circle, draw}]
\node[vertex] (a) at (0, 0) {$q_{2}$};
\node[vertex] (c) at (0, 3.4) {$q_{3}$};
\node[vertex] (d) at (2, 3.4) {$q_{6}$};
\node[vertex] (f) at (2, 0) {$q_{5}$};
\path (a) [->] edge node [left] {$w_{2,3}$} (c);
\path (f) [->] edge node [right]{$w_{5,6}$} (d);
\path (a) [->] edge node [below] {$w_{2,5}$} (f);
\path (c) [->] edge node [above] {$w_{3,6}$} (d);
\end{tikzpicture}
\caption{$F_5$}\label{f6e}
\vspace{\baselineskip}
\end{subfigure}\qquad
\caption{}\label{f6}
\end{center}
\end{figure}

\begin{proof}
The idea of proof is similar to the one of Proposition \ref{c1}.

Let $F_1$ be the 4-dimensional component of $M^{\ker w_{1,2} \cap \ker w_{1,3}}$ that contains $q_1$. By Proposition \ref{p7}, some 2-valent sub-multigraph $\Gamma_{F_1}$ of Figure \ref{f1a} describes the $T^3$-action on $F_1$. Since there is an edge from $q_1$ to $q_2$ with label $w_{1,2}$, $q_2 \in F_1$ and similarly $q_3 \in F_1$. By Lemma \ref{p3}, $\Gamma_{F_1}$ contains at most 4 vertices. If $\Gamma_{F_1}$ contains one of $q_4$, $q_5$, and $q_6$, then $\Gamma_{F_1}$ must contain at least 5 vertices. Thus, $\Gamma_{F_1}$ is Figure \ref{f6a}. 

All the other cases are analogous. If we let $F_2$ ($F_3$, $F_4$, and $F_5$) be the 4-dimensional component of $M^{\ker w_{4,5} \cap \ker w_{4,6}}$ ($M^{\ker w_{1,2} \cap \ker w_{1,4}}$, $M^{\ker w_{1,3} \cap \ker w_{1,4}}$, and $M^{\ker w_{2,3} \cap \ker w_{2,5}}$) that contains $q_4$ ($q_1$, $q_1$, and $q_2$), then by Proposition \ref{p7} and Lemma \ref{p3}, Figure \ref{f6b} (Figure \ref{f6c}, Figure \ref{f6d}, and Figure \ref{f6e}) describes the $T^3$-action on $F_2$ ($F_3$, $F_4$, and $F_5$, respectively.) 

For each $1 \leq i \leq 5$, quotienting out by the subgroup of $T^3$ that acts trivially on $F_i$, it follows that $F_i$ is a 4-dimensional almost complex torus manifold.
\end{proof}

\section{Proof of Theorem \ref{t11}} \label{s7}

In this section, we prove Theorem \ref{t11}.
For this, we first determine weights at the fixed points, for a 4-dimensional almost complex torus manifold with at most 4 fixed points; by Theorem \ref{t211}, such a manifold has at least 3 fixed points.

\begin{lemma} \label{l6}
Let $M$ be a 4-dimensional almost complex torus manifold.
\begin{enumerate}[(1)]
\item If there are 3 fixed points, Figure \ref{f11a} describes $M$ for some $a$ and $b$ that span $\mathbb{Z}^2$.
\item If there are 4 fixed points, Figure \ref{f11b} describes $M$ for some $a$ and $b$ that span $\mathbb{Z}^2$ and for some integer $m$.
\end{enumerate}
\end{lemma}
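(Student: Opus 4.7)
The plan is as follows. By Proposition \ref{p2} the graph describing $M$ has neither self-loops nor multiple edges, and since $\dim M = 4$ every vertex has exactly two incident edges. So for $3$ fixed points the graph must be the triangle $K_3$, and for $4$ fixed points it must be the $4$-cycle $C_4$.

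For part (1), I would label the vertices $q_1, q_2, q_3$ and orient each edge from the lower- to the higher-indexed endpoint, writing $w_{i,j}$ for the label of the edge $q_i \to q_j$. The weights at $q_1$ are $\{w_{1,2}, w_{1,3}\}$; by effectivity of the $T^2$-action on the almost complex manifold $M$ they form a $\mathbb{Z}$-basis of $\mathbb{Z}^2$ (any element of $T^2$ acting trivially on $T_{q_1}M$ must act trivially on $M$). Set $a := w_{1,2}$. Lemma \ref{l5} applied to the edges $q_1q_2$ and $q_1q_3$ gives $w_{2,3} = w_{1,3} + k_1 a$ and $-w_{2,3} = a + k_2 w_{1,3}$ for some integers $k_1, k_2$. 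Adding these equations and using the linear independence of $a$ and $w_{1,3}$ forces $k_1 = k_2 = -1$, so $w_{2,3} = w_{1,3} - a$. Setting $b := w_{2,3}$ then gives $w_{1,3} = a+b$, matching the triangle labels of Figure \ref{f11a}; since $\{a, a+b\}$ is a basis of $\mathbb{Z}^2$, so is $\{a, b\}$.

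For part (2), I would label the vertices cyclically $q_1, q_2, q_3, q_4$ around the $4$-cycle and orient the edges $q_1 \to q_2$, $q_2 \to q_3$, $q_3 \to q_4$, $q_4 \to q_1$ with labels $w_{1,2}, w_{2,3}, w_{3,4}, w_{4,1}$. Set $a := w_{1,2}$ and $b := -w_{4,1}$, so the weights at $q_1$ are $\{a, b\}$, a $\mathbb{Z}$-basis of $\mathbb{Z}^2$. Lemma \ref{l5} applied to the two edges at $q_1$ gives $w_{2,3} = b + k_1 a$ and $w_{3,4} = -a + k_2 b$ for integers $k_1, k_2$. The remaining compatibility, coming from Lemma \ref{l5} on the edge $q_2 q_3$, demands $w_{3,4} \equiv -a \pmod{w_{2,3}}$; writing $k_2 b = k_3(b + k_1 a)$ for some $k_3 \in \mathbb{Z}$ and using that $\{a, b\}$ is a basis yields $k_2 = k_3$ and $k_1 k_3 = 0$. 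Hence $k_1 k_2 = 0$, so either $k_1 = 0$ or $k_2 = 0$, leaving exactly one integer parameter $m$. Cyclic relabeling of the vertices (combined, if necessary, with the basis change $(a,b) \mapsto (b,-a)$) puts either subcase into the form of Figure \ref{f11b}.

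The main obstacle will be twofold. First, one must rigorously justify that the weights at a fixed point form a $\mathbb{Z}$-basis of $\mathbb{Z}^2$; this follows from effectivity together with an equivariant linearization at the fixed point (an almost-complex-structure-preserving torus element fixing $p$ and acting as identity on $T_p M$ is forced to act trivially on the connected manifold $M$), and we will invoke it as a standard fact. Second, in part (2), one has to check that the two subcases $k_1 = 0$ and $k_2 = 0$ genuinely fit the single picture in Figure \ref{f11b} after the indicated relabeling; this is a routine but slightly fiddly bookkeeping check with signs and edge orientations.
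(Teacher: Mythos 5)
Your proposal is correct, but it reaches the conclusion by a genuinely different route from the paper. The paper's key step is the Atiyah--Bott--Berline--Vergne localization theorem (Theorem \ref{ab}) applied to the equivariant class $1$: the identity $\int_M 1 = 0$ gives $\frac{1}{ac}-\frac{1}{ab}+\frac{1}{bc}=0$, hence $c=a+b$, in the triangle case, and the factorization $\bigl(\frac{1}{a}-\frac{1}{d}\bigr)\bigl(\frac{1}{b}-\frac{1}{c}\bigr)=0$, hence $a=d$ or $b=c$, in the square case; only then is Lemma \ref{l5} invoked once to get the congruence $c=b+ma$. You instead avoid localization entirely and run Lemma \ref{l5} around every edge of the cycle, closing the system using the fact that the two weights at a fixed point form a $\mathbb{Z}$-basis of $\mathbb{Z}^2$. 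I checked your algebra: in the triangle case the two congruences force $k_1=k_2=-1$ and hence $w_{1,3}=a+b$, and in the square case the relation $k_2b=k_3(b+k_1a)$ does force $k_1k_2=0$, which reproduces exactly the paper's dichotomy $a=d$ or $b=c$; the subsequent relabeling into Figure \ref{f11b} works as you describe. What each approach buys: the localization argument is shorter, needs no unimodularity of the weights, and sidesteps the multiset-matching bookkeeping in Lemma \ref{l5} (one should note that the degenerate matching, where the non-edge weight is $\equiv 0$ modulo the edge label, must be ruled out --- in your setup it contradicts the basis property, so this is fine but worth a sentence); your argument is more elementary and stays entirely inside the graph/congruence formalism of Section \ref{s2}, at the cost of invoking the unimodularity of the isotropy representation, a fact the paper also uses without proof (e.g.\ in Lemma \ref{p3}), so this is not a gap by the paper's own standards.
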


\begin{figure}
\begin{center}
\begin{subfigure}[b][5cm][s]{.4\textwidth}
\centering
\begin{tikzpicture}[state/.style ={circle, draw}]
\node[vertex] (d) at (0, 3.4) {$p_{3}$};
\node[vertex] (e) at (-1.4, 1.7) {$p_{1}$};
\node[vertex] (f) at (0, 0) {$p_{2}$};
\path (e) [->] edge node [left] {$a$} (f);
\path (e) [->] edge node [left] {$a+b$} (d);
\path (f) [->] edge node [right]{$b$} (d);
\end{tikzpicture}
\caption{3 fixed points}\label{f11a}
\vspace{\baselineskip}
\end{subfigure}\qquad
\begin{subfigure}[b][5cm][s]{.3\textwidth}
\centering
\vspace{2mm}
\begin{tikzpicture}[state/.style ={circle, draw}]
\node[vertex] (a) at (0, 0) {$p_{1}$};
\node[vertex] (c) at (0, 3) {$p_{3}$};
\node[vertex] (d) at (3, 3) {$p_{4}$};
\node[vertex] (f) at (3, 0) {$p_{2}$};
\path (a) [->] edge node [left] {$b$} (c);
\path (f) [->] edge node [right]{$b+ma$} (d);
\path (a) [->] edge node [below] {$a$} (f);
\path (c) [->] edge node [above] {$a$} (d);
\end{tikzpicture}
\caption{4 fixed points}\label{f11b}
\vspace{\baselineskip}
\end{subfigure}\qquad
\caption{}\label{f11}
\end{center}
\end{figure}

\begin{figure}
\begin{center}
\begin{subfigure}[b][5cm][s]{.4\textwidth}
\centering
\begin{tikzpicture}[state/.style ={circle, draw}]
\node[vertex] (d) at (0, 3.4) {$p_{3}$};
\node[vertex] (e) at (-1.4, 1.7) {$p_{1}$};
\node[vertex] (f) at (0, 0) {$p_{2}$};
\path (e) [->] edge node [left] {$a$} (f);
\path (e) [->] edge node [left] {$c$} (d);
\path (f) [->] edge node [right]{$b$} (d);
\end{tikzpicture}
\caption{}\label{f12a}
\vspace{\baselineskip}
\end{subfigure}\qquad
\begin{subfigure}[b][5cm][s]{.4\textwidth}
\centering
\begin{tikzpicture}[state/.style ={circle, draw}]
\node[vertex] (a) at (0, 0) {$p_{1}$};
\node[vertex] (c) at (0, 3) {$p_{3}$};
\node[vertex] (d) at (3, 3) {$p_{4}$};
\node[vertex] (f) at (3, 0) {$p_{2}$};
\path (a) [->] edge node [left] {$b$} (c);
\path (f) [->] edge node [right]{$c$} (d);
\path (a) [->] edge node [below] {$a$} (f);
\path (c) [->] edge node [above] {$d$} (d);
\end{tikzpicture}
\caption{}\label{f12b}
\vspace{\baselineskip}
\end{subfigure}\qquad
\caption{}\label{f12}
\end{center}
\end{figure}

\begin{proof}

Suppose that $M$ has 3 fixed points $p_1$, $p_2$, and $p_3$. By Proposition \ref{p2}, Figure \ref{f12a} describes $M$ for some non-zero $a, b, c \in \mathbb{Z}^2$. In particular, $p_1$, $p_2$, and $p_3$ have weights $\{a,c\}$, $\{-a, b\}$, and $\{-b-c\}$, respectively.

Since $\int_M$ is a map from $H_{T^2}^i (M;\mathbb{Z})$ to $H^{i-4}(BT^2; \mathbb{Z})$ for all $i$, the image of the equivariant cohomology class 1 under the map $\int_M$ is zero, that is,
\begin{center}
$\displaystyle\int_M 1=0$.
\end{center}
By Theorem \ref{ab}, 
\begin{center}$\displaystyle \int_M 1 = \sum_{p \in M^{T^2}} \frac{1}{e_{T^2} (N_p M)}= \sum_{p \in M^{T^2}} \frac{1}{e_{T^2} (T_p M)} = \frac{1}{ac}+\frac{1}{-ab} + \frac{1}{(-b)(-c)}$.\end{center}
Therefore, $c=a+b$.

Next, suppose that $M$ has 4 fixed points $p_1$, $p_2$, $p_3$, and $p_4$. 
By Proposition \ref{p2}, Figure \ref{f12b} describes $M$ for some non-zero $a, b, c, d \in \mathbb{Z}^2$.
In particular, $p_1$, $p_2$, $p_3$, and $p_4$ have weights $\{a,b\}$, $\{-a,c\}$, $\{-b,d\}$, and $\{-c,-d\}$, respectively.

Applying Theorem \ref{ab} to the equivariant cohomology class 1,
\begin{center}$\displaystyle 0=\int_M 1 = \sum_{p \in M^{T^2}} \frac{1}{e_{T^2} (N_p M)}= \sum_{p \in M^{T^2}} \frac{1}{e_{T^2} (T_p M)} = \frac{1}{ab}+\frac{1}{-ac} + \frac{1}{-bd}+ \frac{1}{(-c)(-d)} = \left(\frac{1}{a}-\frac{1}{d}\right)\left(\frac{1}{b}-\frac{1}{c}\right)$.\end{center}
Therefore, $a=d$ or $b=c$.

Suppose that $a=d$. Because there is an edge from $p_1$ to $p_2$ with label $a$, $p_1$ and $p_2$ are in the same component $F$ of $M^{\ker a}$. Applying Lemma \ref{l5} to the $T^2$-action on $F$, taking $w=a$, the weights $\{a,b\}$ at $p_1$ and the weights $\{-a,c\}$ at $p_2$ are equal modulo $a$. Thus, $c=b+ma$ for some integer $m$.

Similarly, if $b=c$, it follows from Lemma \ref{l5} that $a=b+mb$ for some integer $m$; in this case, by relabeling fixed points and weights, this lemma follows. \end{proof}

From Lemma \ref{l6}, we obtain the following lemma.

\begin{lemma}\label{l4}
Suppose that Figure \ref{f1a} describes an almost complex torus manifold. Then the followings hold.
\begin{enumerate}
\item $w_{1,2}+w_{2,3}=w_{1,3}.$
\item $w_{4,5}+w_{5,6}=w_{4,6}.$
\item $w_{1,2}=w_{4,5}$ and $w_{1,4} \equiv w_{2,5} \mod w_{1,2}$, or $w_{1,4}=w_{2,5}$ and $w_{1,2} \equiv w_{4,5} \mod w_{1,4}$.
\item $w_{1,3}=w_{4,6}$ and $w_{1,4} \equiv w_{3,6} \mod w_{1,3}$, or $w_{1,4}=w_{3,6}$ and $w_{1,3} \equiv w_{4,6} \mod w_{1,4}$.
\item $w_{2,3}=w_{5,6}$ and $w_{2,5} \equiv w_{3,6} \mod w_{2,3}$, or $w_{2,5}=w_{3,6}$ and $w_{2,3} \equiv w_{5,6} \mod w_{2,5}$.
\end{enumerate}
\end{lemma}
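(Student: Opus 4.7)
The plan is to obtain each numbered assertion directly from one of the five 4-dimensional almost complex torus submanifolds $F_1, \ldots, F_5$ produced by Lemma \ref{c2}. Part $i$ of the conclusion will come from $F_i$, by matching its sub-multigraph (one of Figures \ref{f6a}--\ref{f6e}) to the standard graph of a 4-dimensional almost complex torus manifold as classified in Lemma \ref{l6}.

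For parts (1) and (2), the submanifolds $F_1$ and $F_2$ each carry exactly three fixed points, so Lemma \ref{l6}(1) applies and the sub-multigraph must match Figure \ref{f11a}. In that figure the labels of the three edges of the unique source vertex sum correctly: the two outgoing labels sum to the label reaching the sink. Reading this relation off Figure \ref{f6a} (resp.\ Figure \ref{f6b}) yields (1) (resp.\ (2)). A small technicality: Lemma \ref{l6} is stated for effective $T^2$-actions, whereas the restricted $T^3$-action on $F_i$ is not effective. Passing to the quotient $T^3/S_i$ by the ineffective subtorus $S_i$ produces an effective action to which Lemma \ref{l6} applies; the resulting identity holds in the character lattice of $T^3/S_i \cong \mathbb{Z}^2$ and lifts to $\mathbb{Z}^3$ under the natural injection of character lattices.

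Parts (3), (4), (5) come from Lemma \ref{l6}(2) applied to $F_3$, $F_4$, $F_5$ respectively. Each of these sub-multigraphs is a consistently oriented $4$-cycle (as one reads off Figures \ref{f6c}--\ref{f6e}), matching the orientation pattern of Figure \ref{f11b}. Such a cycle can be identified with Figure \ref{f11b} in exactly two graph-isomorphic ways, corresponding to which of the two pairs of opposite edges plays the role of the pair carrying the repeated label "$a$" (while the complementary pair carries labels $b$ and $b+ma$). For $F_3$, the two identifications give: either $w_{1,2} = w_{4,5}$ with $w_{2,5} \equiv w_{1,4} \pmod{w_{1,2}}$, or $w_{1,4} = w_{2,5}$ with $w_{4,5} \equiv w_{1,2} \pmod{w_{1,4}}$, which is precisely (3). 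The same argument for $F_4$ and $F_5$ yields (4) and (5). The only real work is bookkeeping: tracking edge orientations carefully and recognizing that the "or" in (3)--(5) records exactly the two possible graph isomorphisms with Figure \ref{f11b}. No deeper input beyond Lemmas \ref{c2} and \ref{l6} is needed.
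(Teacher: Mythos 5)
Your proposal is correct and follows essentially the same route as the paper: both derive each part of the lemma by applying Lemma \ref{c2} to obtain the submanifolds $F_1,\ldots,F_5$ with their describing sub-multigraphs and then invoking Lemma \ref{l6}. You supply somewhat more detail than the paper (the effectiveness/quotient-torus technicality and the two possible graph identifications with Figure \ref{f11b} accounting for the ``or'' in (3)--(5)), but the underlying argument is identical.
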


\begin{proof}
By Lemma \ref{c2}, Figure \ref{f6a} describes the 4-dimensional almost complex torus manifold $F_1$. Thus, by Lemma \ref{l6}, $w_{1,2}+w_{2,3}=w_{1,3}$.

Similarly, since Figures \ref{f6b}, $\cdots$, \ref{f6e} describe the 4-dimensional almost complex torus manifolds $F_2, \cdots, F_5$, respectively, all the other cases follow from Lemma \ref{l6}. \end{proof}

With all of the above, we are ready to prove our main result, Theorem \ref{t11}.

\begin{proof}[Proof of Theorem \ref{t11}]
By Proposition \ref{p9} and Proposition \ref{c1}, Figure \ref{f1a} describes $M$. By Lemma \ref{c2}, there exist 4-dimensional almost complex torus submanifolds $F_1, \cdots, F_5$ described by Figures \ref{f6a}, $\cdots$, \ref{f6e}, respectively. For convenience, let $a:=w_{1,2}$, $b:=w_{2,3}$, and $c:=w_{1,4}$. By Lemma \ref{l4} (1), $w_{1,3} = a+b$. The fixed point $q_1$ has weights $\{a,a+b,c\}$ that span $\mathbb{Z}^3$, and hence $a,b,c$ span $\mathbb{Z}^3$.

By Lemma \ref{l4} (3) we have the following two cases.
\begin{enumerate}[(A)]
\item $w_{4,5} = a$ and $w_{2,5} = c+ka$ for some integer $k$.
\item $w_{2,5} = c$ and $w_{4,5} = a+kc$ for some integer $k$.
\end{enumerate}

Suppose that Case (A) holds. By Lemma \ref{l4} (4) we have the following two cases.
\begin{enumerate}[(i)]
\item $w_{4,6} = a+b$ and $w_{3,6} = c+l(a+b)$ for some integer $l$.
\item $w_{3,6} = c$ and $w_{4,6} = a+b+lc$ for some integer $l$.
\end{enumerate}

Suppose that Case (A-i) holds. By Lemma \ref{l4} (2) it follows that $w_{5,6}= b$. Since $w_{2,3}=w_{5,6}=b$, by Lemma \ref{l4} (5) it follows that $c+ka=w_{2,5} \equiv w_{3,6}=c+l(a+b) \mod w_{2,3}=b$. Since $a,b,c$ form a basis of $\mathbb Z^3$, we have $l=k$ and Figure \ref{f4a} describes $M$.

Suppose that Case (A-ii) holds. By Lemma \ref{l4} (2) it follows that $w_{5,6} = b+lc$. By Lemma \ref{l4} (5), (A-ii-1) $b=w_{2,3}=w_{5,6}=b+lc$ and $c+ka=w_{2,5} \equiv w_{3,6}=c \mod w_{2,3}=b$, or (A-ii-2) $c+ka=w_{2,5}=w_{3,6}=c$ and $b=w_{2,3} \equiv w_{5,6}=b+lc \mod w_{2,5}=c+ka$. Since $a,b,c$ form a basis of $\mathbb{Z}^3$, in the former case (A-ii-1) we have $l=0$ and $k=0$ and both Figure \ref{f4a} with $k=0$ and Figure \ref{f4b} with $k=l=0$ describe $M$, and in the latter case (A-ii-2) we have $k=0$ and Figure \ref{f4b} with $k=0$ describes $M$.

Suppose that Case (B) holds. By Lemma \ref{l4} (5) we have the following two cases.
\begin{enumerate}[(i)]
\item $w_{5,6} = b$ and $w_{3,6} = c+lb$ for some integer $l$.
\item $w_{3,6} = c$ and $w_{5,6} = b+lc$ for some integer $l$.
\end{enumerate}

Suppose that Case (B-i) holds. By Lemma \ref{l4} (2) it follows that $w_{4,6} = a+b+kc$. By Lemma \ref{l4} (4), (B-i-1) $a+b=w_{1,3}=w_{4,6}=a+b+kc$ and $c=w_{1,4} \equiv w_{3,6}=c+lb \mod w_{1,3}=a+b$, or (B-i-2) $c=w_{1,4}=w_{3,6}=c+lb$ and $a+b=w_{1,3} \equiv w_{4,6}=a+b+kc \mod w_{1,4}=c$. Because $a,b,c$ span $\mathbb{Z}^3$, in the former case (B-i-1) we have $k=l=0$ and both Figure \ref{f4a} with $k=0$ and Figure \ref{f4b} with $k=l=0$ describe $M$, and in the latter case (B-i-2) we have $l=0$ and Figure \ref{f4b} with $l=0$ describes $M$.

Suppose that Case (B-ii) holds. By Lemma \ref{l4} (2) it follows that $w_{4,6} = a+b+(k+l)c$. In this case, Figure \ref{f4b} describes $M$.
\end{proof}

\section{Invariants} \label{s8}

In this section, we prove Theorem \ref{t81}; we determine invariants of 6-dimensional almost complex torus manifolds with Euler number 6.

\begin{proof}[\textbf{Proof of Theorem \ref{t81}}]
By Proposition \ref{p6}, $M$ has 6 fixed points. Because $\dim M=6$, for a fixed point $p$, the third equivariant Chern class $c_3(M)$ at $p$ is the same as the equivariant Euler class of the normal space at $p$, which is the product of the weights at $p$. That is, $c_3(M)(p)=e_{T^3}(Np)=\prod_{i=1}^3 w_{p,i}$. Therefore, by Theorem \ref{ab}, the Chern number $\int_M c_3$ is
\begin{center}
$\displaystyle \int_M c_3=\sum_{p \in M^{T^3}} \frac{c_3(M)(p)}{e_{T^3}(Np)}=\sum_{p \in M^{T^3}} \frac{\prod_{i=1}^3 w_{p,i}}{\prod_{i=1}^3 w_{p,i}}=\sum_{p \in M^{T^3}}1=6$.
\end{center}

First, suppose that Figure \ref{f4a} describes $M$. 
The weights at the fixed points $p_1,\cdots,p_6$ are $\{-a,b,c+ka\}$, $\{a,a+b,c\}$, $\{-b, -a-b, c+k(a+b)\}$, $\{-c-k(a+b), -a-b, -b\}$, $\{-c, a+b, a\}$, $\{-c-ka, b, -a\}$, respectively.

For a non-negative integer $i$ and real numbers $x_1$, $x_2$, $\cdots$, $x_n$, let $\sigma_i(x_1,x_2,\cdots,x_n)$ denote the  $i$-th elementary symmetric polynomial in $x_j$. For instance, $\sigma_0(x_1,\cdots,x_n)=1$, $\sigma_1(x_1,\cdots,x_n)=\sum_{i=1}^n x_i$, and $\sigma_2(x_1,\cdots,x_n)=\sum_{i<j} x_i x_j$.

By Theorem \ref{ab},
\begin{center} $\displaystyle \int_M c_1 c_2 = \sum_{p\in M^{T^3}} \frac{\sigma_1 (w_{p,1}, w_{p,2}, w_{p,3}) \sigma_2 (w_{p,1}, w_{p,2}, w_{p,3})}{\prod_{i=1}^3 w_{p,i}}$
$\displaystyle=-\frac{\{(k-1)a+b+c\}\{-ka^2+(k-1)ab-ac+bc\}}{ab(c+ka)}$
$\displaystyle+\frac{(2a+b+c)(a^2+ab+2ac+bc)}{a(a+b)c}$
$\displaystyle-\frac{\{(k-1)a+(k-2)b+c\}\{ka^2+(3k-1)ab+ac+(2k-1)b^2+2bc\}}{b(a+b)\{c+k(a+b)\}}$
$\displaystyle+\frac{\{(k+1)a+(k+2)b+c\}\{ka^2+(3k+1)ab+ac+(2k+1)b^2+2bc\}}{\{c+k(a+b)\}(a+b)b}$
$\displaystyle-\frac{(2a+b-c)(a^2+ab-2ac-bc)}{c(a+b)a}$
$\displaystyle+\frac{\{(-k-1)a+b-c\}\{ka^2+(-k-1)ab+ac-bc\}}{(c+ka)ba}.$
\end{center}

Combining the terms that have the same denominator, this is equal to
\begin{center}
$\displaystyle\frac{2(-a^2+3ab-b^2)(ka+c)}{ab(c+ka)}
+\frac{2(5a^2+5ab+b^2)c}{a(a+b)c}$
$\displaystyle+\frac{2(a^2+5ab+5b^2)(ka+kb+c)}{(a+b)b\{c+k(a+b)\}}$
$\displaystyle=\frac{2}{ab(a+b)}\{(-a^2+3ab-b^2)\times(a+b) + (5a^2+5ab+b^2)\times b +(a^2+5ab+5b^2)\times a\}$

$\displaystyle=\frac{2}{ab(a+b)}\times 12ab(a+b)=24$.\end{center}
Therefore, $\int_M c_1 c_2 = 24$.

Since $\int_M T_0^3 = \int_M -T_3^3 = \int_M \frac{c_1 c_2}{24} = 1$ and $\int_M T_1^3 = \int_M -T_2^3 = \int_M \frac{c_1 c_2 - 12 c_3}{24}= -2$, the Hirzebruch $\chi_y$-genus of $M$ is
\begin{center}
$\displaystyle \chi_y (M) = \sum_{i=0}^3 \left( \int_M T_i^3 \right)y^i = 1-2y + 2y^2-y^3$.
\end{center}

Next we compute $\int_M c_1^3$. By Theorem \ref{ab},
\begin{center}$\displaystyle \int_M c_1^3=\sum_{p\in M^{T^3}} \frac{\sigma_1(w_{p,1}, w_{p,2}, w_{p,3})^3}{\prod_{i=1}^3 w_{p,i}}$
$\displaystyle=-\frac{\{(k-1)a+b+c\}^3}{ab(c+ka)}
+\frac{(2a+b+c)^3}{a(a+b)c}
+\frac{\{(k-1)a+(k-2)b+c\}^3}{b(a+b)\{c+k(a+b)\}}
-\frac{\{(-k-1)a+(-k-2)b-c\}^3}{\{c+k(a+b)\}(a+b)b}
-\frac{(2a+b-c)^3}{c(a+b)a}
+\frac{\{(-k-1)a+b-c\}^3}{(c+ka)ba}$.
\end{center}
Combining the terms that have the same denominator, this is equal to
\begin{center}
$\displaystyle 
\frac{-2\{(k^2+3)a^2-6ab+2kac+3b^2+c^2\}}{ab}$
$\displaystyle
+\frac{2(12a^2+12ab+3b^2+c^2)}{a(a+b)}$
$\displaystyle+\frac{2\{(k^2+3)a^2+(2k^2+12)ab+2kac+(k^2+12)b^2+2kbc+c^2\}}{b(a+b)}$

$\displaystyle= \frac{2}{ab(a+b)}\times [-\{(k^2+3)a^2-6ab+2kac+3b^2+c^2\}\times (a+b)$
$\displaystyle+(12a^2+12ab+3b^2+c^2) \times b
+\{(k^2+3)a^2+(2k^2+12)ab+2kac+(k^2+12)b^2+2kbc+c^2\}\times a]$
$\displaystyle =\frac{2}{ab(a+b)} \times (k^2+27)ab(a+b)=2(k^2+27)$.\end{center}

Second, suppose that Figure \ref{f4b} describes $M$.
The weights at the fixed points $p_1,\cdots,p_6$ are $\{-a, b, c\}$, $\{a, a+b, c\}$, $\{-b, -a-b, c\}$, $\{-c, -a-b-(k+l)c, -b-lc\}$, $\{-c, a+b+(k+l)c, a+kc\}$, $\{-c, b+lc, -a-kc\}$, respectively. 
We shall replace $a+kc$, $b+lc$, and $-c$ by $A$, $B$, and $C$, respectively. Then the weights at the latter three fixed points are $\{C, -A-B, -B\}$, $\{C, A+B, A\}$, $\{C, B, -A\}$, which are of the same form as the former three fixed points.
By Theorem \ref{ab},
\begin{center}$\displaystyle \int_M c_1 c_2 = \sum_{p\in M^{T^3}} \frac{\sigma_1 (w_{p,1}, w_{p,2}, w_{p,3}) \sigma_2 (w_{p,1}, w_{p,2}, w_{p,3})}{\prod_{i=1}^3 w_{p,i}}$
$\displaystyle=
\frac{(-a+b+c)(-ab-ac+bc)}{-abc}
+\frac{(2a+b+c)(a^2+ab+ac+bc)}{a(a+b)c}
+\frac{(-a-2b+c)(ab-ac+b^2-2bc)}{b(a+b)c}$
$\displaystyle+\frac{(-A-2B+C)(AB-AC+B^2-2BC)}{B(A+B)C}$
$\displaystyle+\frac{(2A+B+C)(A^2+AB+AC+BC)}{A(A+B)C}$
$\displaystyle+\frac{(-A+B+C)(-AB-AC+BC)}{-ABC}
$.\end{center}

Computing the first three terms, we get
\begin{center}$\displaystyle \frac{1}{ab(a+b)c} \times
\{-(-a+b+c)(-ab-ac+bc)\times(a+b)
+(2a+b+c)(a^2+ab+ac+bc)\times b
+(-a-2b+c)(ab-ac+b^2-2bc)\times a\}$
$\displaystyle=\frac{1}{ab(a+b)c}\times 12abc(a+b)=12.$\end{center}

Hence the sum of the latter three terms is also 12. Therefore, $\int_M c_1 c_2 = 24$. As in the first case, $\int_M T_0^3 = \int_M -T_3^3 = \int_M \frac{c_1 c_2}{24} = 1$ and $\int_M T_1^3 = \int_M -T_2^3 = \int_M \frac{c_1 c_2 - 12 c_3}{24}= -2$ and so the Hirzebruch $\chi_y$-genus of $M$ is
\begin{center}
$\displaystyle \chi_y (M) = \sum_{i=0}^3 \left( \int_M T_i^3 \right) y^i = 1-2y + 2y^2-y^3$.
\end{center}

We compute $\int_M c_1^3$. By Theorem \ref{ab},

\begin{center}$\displaystyle \int_M c_1^3=\sum_{p\in M^{T^3}} \frac{\sigma_1(w_{p,1}, w_{p,2}, w_{p,3})^3}{\prod_{i=1}^3 w_{p,i}}$
$=\displaystyle\frac{(-a+b+c)^3}{-abc}
+\frac{(2a+b+c)^3}{a(a+b)c}
+\frac{(-a-2b+c)^3}{b(a+b)c}
+\frac{(-A-2B+C)^3}{B(A+B)C}
+\frac{(2A+B+C)^3}{A(A+B)C}
+\frac{(-A+B+C)^3}{-ABC}$
.\end{center}

Computing the first three terms, we get
\begin{center}
$\displaystyle\frac{1}{abc(a+b)}\times \{ -(-a+b+c)^3\times(a+b)+(2a+b+c)^3\times b+(-a-2b+c)^3\times a\}$

$\displaystyle=\frac{1}{abc(a+b)} \times 27abc(a+b) =27$.
\end{center}
Thus, the sum of the latter three terms is also 27. Therefore, $\int_M c_1^3=54$. 
\end{proof}

\end{document}